\documentclass[hidelinks, 11pt]{amsart}

\usepackage[margin=3cm]{geometry}

\usepackage{amssymb}

\usepackage{hyperref}
\usepackage[all]{xy}
\usepackage{enumerate}
\usepackage{enumitem}
\usepackage{bbm}
\usepackage{tikz-cd}
\usepackage{extpfeil}
\usepackage{relsize}
\usepackage[bbgreekl]{mathbbol}
\usepackage{slashed}
\usepackage{amsfonts}
\usepackage{bm}
\usepackage{etoolbox}
\usepackage{soul}
\usepackage{mathrsfs}
\usepackage{adjustbox}

\usetikzlibrary{decorations.pathmorphing,arrows.meta}

\usepackage{xcolor}
\colorlet{BLUE}{blue}

\DeclareMathOperator{\Set}{{Set}}
\DeclareMathOperator{\Pro}{{Pro}}

\newcommand{\ZZ}{{\mathbf{Z}}}
\newcommand{\QQ}{{\mathbf{Q}}}

\newcommand{\BB}{{\mathrm{B}}}

\newcommand{\bB}{{\mathbb B}}
\newcommand{\bC}{{\mathbb C}}

\newcommand{\bF}{{\mathbb F}}
\newcommand{\bG}{{\mathbb G}}

\newcommand{\bP}{{\mathbb P}}
\newcommand{\bQ}{{\mathbb Q}}
\newcommand{\bR}{{\mathbb R}}

\newcommand{\bZ}{{\mathbb Z}}

\newcommand{\cA}{{\mathcal A}}

\newcommand{\cC}{{\mathcal C}}
\newcommand{\cD}{{\mathcal D}}

\newcommand{\cL}{{\mathcal L}}

\newcommand{\cO}{{\mathcal O}}

\newcommand{\cX}{{\mathcal X}}

\newcommand{\fX}{{\mathfrak X}}

\newcommand{\fm}{{\mathfrak m}}

\newcommand{\oK}{\overline{K}}

\newcommand{\obQ}{\overline{\mathbb{Q}}}

\newcommand{\tx}{\widetilde{x}}
\newcommand{\ty}{\widetilde{y}}

\newcommand{\hH}{\mathrm{H}}
\newcommand{\rR}{\mathrm{R}}
\newcommand{\rE}{\mathrm{E}}
\newcommand{\nr}{\mathrm{nr}}
\newcommand{\SL}{\mathrm{SL}}

\newcommand{\DD}{\mathbf{D}}

\DeclareMathOperator*{\colim}{colim}
\newcommand{\qproet}{\mathrm{qpro\acute{e}t}}
\newcommand{\fet}{\mathrm{f\acute{e}t}}

\DeclareMathOperator{\ab}{{ab}}

\DeclareMathOperator{\gr}{{gr}}

\DeclareMathOperator{\Sym}{{Sym}}

\DeclareMathOperator{\Mod}{{Mod}}

\newcommand{\RGamma}{\mathrm{R}\Gamma}

\DeclareMathOperator{\Aut}{{Aut}}

\DeclareMathOperator{\Ext}{{Ext}}
\DeclareMathOperator{\Hom}{{Hom}}

\DeclareMathOperator{\id}{{id}}

\DeclareMathOperator{\op}{{op}}

\DeclareMathOperator{\Vect}{{Vect}}

\DeclareMathOperator{\coker}{coker}

\DeclareMathOperator{\Map}{Map}
\DeclareMathOperator{\Rep}{Rep}

\DeclareMathOperator{\Shv}{{Shv}}

\newcommand{\ud}{\underline}

\newcommand{\qc}{\mathrm{qc}}
\newcommand{\sep}{\mathrm{sep}}

\newcommand{\mix}{\mathrm{mix}}
\newcommand{\pure}{\mathrm{pure}}

\DeclareMathOperator{\WD}{WD}

\newcommand{\ip}{\frac{1}{p}}

\newcommand{\Fun}{\operatorname{Fun}}

\newcommand{\dR}{\mathrm{dR}}

\newcommand{\cris}{\mathrm{cris}}

\DeclareMathOperator{\Spa}{{Spa}}

\DeclareMathOperator{\Tot}{Tot}

\newcommand{\et}{\mathrm{\acute{e}t}}

\newcommand{\fil}{\mathrm{fil}}

\newcommand{\Fil}{\mathrm{Fil}}

\newcommand{\oF}{\overline{F}}

\newcommand{\an}{\mathrm{an}}
\newcommand{\proet}{\mathrm{pro\acute{e}t}}

\newcommand{\RHom}{\mathrm{RHom}}

\newcommand{\cont}{\mathrm{cont}}
\newcommand{\sing}{\mathrm{sing}}

\newcommand{\pst}{\mathrm{pst}}

\newcommand{\stab}{{\mathrm{st}}}
\newcommand{\FF}{{\rm FF}}
\newcommand{\Coh}{{\rm Coh}}
\newcommand{\obl}{{\rm obl}}
\newcommand{\MHS}{{\rm MHS}}

\DeclareSymbolFontAlphabet{\mathbb}{AMSb} 
\DeclareSymbolFontAlphabet{\mathbbl}{bbold}

\newcommand{\comment}[1]{}

\makeatletter

\newcommand{\Rmnum}[1]{\expandafter\@slowromancap\romannumeral #1@}
\makeatother

\numberwithin{equation}{section}

\newtheorem{pr}[equation]{Proposition}
\newtheorem{thm}[equation]{Theorem}

\newtheorem{lm}[equation]{Lemma}
\newtheorem{lemma}[equation]{Lemma}

\newtheorem{cor}[equation]{Corollary}

\theoremstyle{definition}

\newtheorem{rem}[equation]{Remark}

\newtheorem{construction}[equation]{Construction}
\newtheorem{notation}[equation]{Notation}
\newtheorem{question}[equation]{Question}

\newtheorem{examples}[equation]{Examples}

\newtheorem{defn}[equation]{Definition}

\newcommand{\com}[1]{\ifstrequal{0}{1}{\textcolor{blue}{#1}}{}}

\title[Formality for rigid-analytic spaces]{Formality for rigid-analytic spaces satisfying \newline the weight-monodromy conjecture}

\author{Alexander Petrov}
\address{Yale University, Department of Mathematics, 219 Prospect St, New Haven, CT 06511}
\email{alexander.petrov.57@gmail.com}

\author{Bogdan Zavyalov}
\address{University of Maryland, Department of Mathematics, 4176 Campus Dr, College Park, MD 20742}
\email{bogd.zavyalov@gmail.com}

\begin{document}
\renewcommand{\mathbb}{\mathbf}

\begin{abstract} We prove that \'etale and de Rham cohomology algebras of a smooth proper rigid-analytic space over a finite extension of $\bQ_p$ are formal if the rigid-analytic space satisfies the weight-monodromy conjecture. This is achieved by showing that the underlying $E_{\infty}$-algebra of a monodromy-pure $E_{\infty}$-algebra in Weil--Deligne representations is formal. We give examples of smooth proper rigid-analytic surfaces whose cohomology algebras are not formal.
\end{abstract}

\maketitle

\tableofcontents

\section{Introduction}

In this paper, we study formality of the cohomology of $p$-adic rigid-analytic spaces. Recall that complex Hodge theory implies the following two special properties of the cohomology of compact K\"ahler manifolds:

\begin{thm}[{\hspace{1sp}\cite{deligne-griffiths-morgan-sullivan}}]\label{thm:hodge-theory-complex} Let $X$ be a compact K\"ahler manifold.
\begin{enumerate}
    \item\label{thm:hodge-theory-complex-1} Then the Hodge-to-de Rham spectral sequence $\rE^{i, j}_1 = \hH^j(X, \Omega^i_X) \Rightarrow \hH^{i+j}(X, \bC)$ degenerates on the first page, and 
    \item\label{thm:hodge-theory-complex-2} the $E_\infty$-algebra $\rR\Gamma_{\dR}(X/\bC)$ is formal\footnote{We note that the notion of $E_\infty$-algebras over a field of characteristic $0$ is equivalent to the notion of commutative dg-algebras.}, i.e. it is quasi-isomorphic to the $E_\infty$-algebra $\bigoplus_{i\geq 0} \hH^i_{\dR}(X/\bC)[-i]$.
\end{enumerate}
\end{thm}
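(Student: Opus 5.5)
The plan is to follow the classical Hodge-theoretic argument: the $\partial\bar\partial$-lemma gives both degeneration and formality via the zig-zag of Deligne--Griffiths--Morgan--Sullivan. Fix a K\"ahler metric $\omega$ on $X$. Let $(A^\bullet, d)$ be the commutative dg-algebra of smooth complex-valued differential forms. By the de Rham theorem and the smooth Poincar\'e lemma, it computes $\rR\Gamma_{\dR}(X/\bC)$ as an $E_\infty$-algebra, since the de Rham complex of sheaves of smooth forms is a fine resolution of $\underline{\bC}$ compatible with wedge products. Write $d = \partial + \bar\partial$ and $d^c = -i(\partial - \bar\partial)$, so that $dd^c = 2i\partial\bar\partial$.

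The first and main step is the $dd^c$-lemma. If $\alpha$ is $d$-closed and $d^c$-closed, and $\alpha$ is either $d$-exact or $d^c$-exact, then $\alpha = dd^c\beta$. To prove it, I use Hodge theory for the Laplacians $\Delta_d$, $\Delta_\partial$, $\Delta_{\bar\partial}$ on the compact manifold. The K\"ahler identities $[\Lambda, \bar\partial] = -i\partial^*$ and $[\Lambda,\partial] = i\bar\partial^*$ give $\Delta_d = 2\Delta_\partial = 2\Delta_{\bar\partial}$, so all three Laplacians share the same harmonic space $\mathcal{H}$ and the same Green operator $G$. Suppose, say, $\alpha = d\gamma$. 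Then $\alpha$ has no harmonic part, so $\alpha = \Delta G\alpha$. Expanding with $d^c$-closedness and the commutation relations (notably that $d^c$ anticommutes with $d^*$ up to the Kähler identities) yields $\alpha = dd^c(\pm d^{*}G\,d^{c*}G\alpha)$, or an analogous expression. This step is the genuine analytic input and the main obstacle. The rest is formal algebra.

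The second step is Hodge degeneration, part \eqref{thm:hodge-theory-complex-1}. Harmonic forms decompose into types, which gives $\mathcal{H}^n = \bigoplus_{i+j=n} \mathcal{H}^{i,j}$ with $\mathcal{H}^{i,j} \cong \hH^j(X,\Omega^i_X)$ by Dolbeault's theorem. Hence $\sum_{i+j=n}\dim \hH^j(X,\Omega^i_X) = \dim \hH^n(X,\bC)$. The spectral sequence is first-quadrant and $\rE_\infty$ is a subquotient of $\rE_1$, so this equality of dimensions forces $\rE_1 = \rE_\infty$.

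The third step is formality, part \eqref{thm:hodge-theory-complex-2}. Let $A_c = \ker d^c \subset A$. It is a sub-dg-algebra, since $d^c$ is a derivation anticommuting with $d$. Consider the zig-zag
\[
(A,d) \hookleftarrow (A_c, d) \twoheadrightarrow (A_c/d^cA, 0) = (H_{d^c}(A), 0),
\]
where $d^cA = \operatorname{im} d^c$ is an ideal in $A_c$. The induced differential on the quotient vanishes: for $\alpha\in A_c$, the form $d\alpha$ is $d^c$-closed and $d$-exact, so by the $dd^c$-lemma $d\alpha = d^c(-d\beta)\in d^cA$. Using the $dd^c$-lemma repeatedly, one checks that both maps are quasi-isomorphisms. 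For the inclusion, surjectivity on cohomology holds because any $d$-closed $\alpha$ can be modified by a $d$-exact form to be $d^c$-closed: $d^c\alpha$ is $d$-closed and $d^c$-exact, so $d^c\alpha = dd^c\beta$, and then $\alpha - d\beta$ works. Injectivity holds because a $d$-exact element of $A_c$ equals $dd^c\beta$, which is the differential of an element of $A_c$. The projection is handled in the same way.

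Finally, $H_{d^c}(A)\cong H_d(A)$, since $d^c$ is conjugate to $d$ via the operator $J$. These are commutative dg-algebra maps, and in characteristic $0$ commutative dg-algebras model $E_\infty$-algebras. So the zig-zag exhibits $\rR\Gamma_{\dR}(X/\bC)\simeq \bigoplus_i \hH^i_{\dR}(X/\bC)[-i]$ as $E_\infty$-algebras.
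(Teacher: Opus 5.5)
Your proposal is correct and is essentially the Deligne--Griffiths--Morgan--Sullivan argument that the paper cites without reproving. That argument consists of the $dd^c$-lemma from the K\"ahler identities, degeneration by comparing $\sum_{i+j=n} h^{i,j}$ with $b_n$, and the zig-zag $(A,d)\hookleftarrow(\ker d^c,d)\twoheadrightarrow(H_{d^c}(A),0)$. One sign slip: since $d^c d=-dd^c$, from $d^c\alpha=dd^c\beta$ you get $d^c(\alpha+d\beta)=0$, so the correct representative is $\alpha+d\beta$, not $\alpha-d\beta$; also, the final appeal to $J$ is unnecessary, because the zig-zag of dga quasi-isomorphisms already identifies $H_{d^c}(A)$ with $H_d(A)$ as graded algebras.
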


We note that \cite[Theorem 12.1]{Sullivan-infinitesimal}  and \cite[Corollary 6.9]{Obstructions} imply that the $E_\infty$-algebra $\rR\Gamma_\sing(X, \bQ)$ is also formal for a compact K\"ahler manifold $X$. The significance of this property is that for a simply connected K\"ahler manifold $X$, its rational homotopy type is therefore recovered from the cohomology ring $\hH^*_{\sing}(X,\bQ)$ via Quillen's rational homotopy theory.

For a prime number $p$, Hodge-to-de Rham degeneration holds for {\it all} smooth proper rigid-analytic spaces over $\bC_p\coloneqq \widehat{\obQ}_p$, cf. \cite[Theorem 2.1]{scholze-rio}. This is remarkable in that the degeneration fails for arbitrary compact complex manifolds, so in that respect arbitrary smooth proper rigid-analytic spaces behave as K\"ahler manifolds.

The main result of this paper is that the formality of the cohomology of a smooth proper rigid-analytic space defined over a finite extension of $\bQ_p$ holds, provided that it satisfies the weight-monodromy conjecture:
\begin{thm}[{{Theorem~\ref{thm:main-formality} and Lemma~\ref{dr vs padic etale}}}]\label{thm: wm implies formality intro}{Let $p, \ell$ be prime numbers, possibly equal, let $K$ be a finite extension of $\QQ_p$, and let $X$ be a smooth proper rigid-analytic space over $K$. Assume that the weight-monodromy conjecture holds for $\hH^i_\et(X_{\bC_p}, \QQ_\ell)$ for all $i\geq 0$. Then the $E_{\infty}$-algebra $\RGamma_{\et}(X_{\bC_p},\bQ_{\ell})$ is formal. For $\ell=p$, the de Rham cohomology $E_{\infty}$-algebra $\RGamma_{\dR}(X/K)$ is also formal.}
\end{thm}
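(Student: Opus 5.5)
The strategy is to enhance $\RGamma_{\et}(X_{\bC_p},\bQ_\ell)$ to an $E_\infty$-algebra in a symmetric monoidal $\infty$-category of Weil--Deligne representations (over $\obQ_\ell$). We then use the purity of monodromy weights to split it canonically into its cohomology. This mirrors Deligne's weight argument for formality over finite fields. In that setting a Frobenius with pure weights forces the algebra to decompose as a direct sum of weight-graded pieces, with $\hH^i$ sitting in weight $i$.

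\textbf{Step 1 (Weil--Deligne enhancement).} Choose a finite extension over which $\rho=\RGamma_{\et}(X_{\bC_p},\bQ_\ell)$ becomes quasi-unipotent, i.e.\ inertia acts through a unipotent quotient on an open subgroup. This uses Grothendieck's monodromy theorem for $\ell\neq p$, and the $p$-adic monodromy theorem together with $D_{\pst}$ for $\ell=p$. We then produce a symmetric monoidal functor from the relevant category of continuous $G_K$-representations (for $\ell=p$, potentially semistable ones) to the derived category of Weil--Deligne representations $(V,\varphi,N)$, and apply it to the $E_\infty$-algebra $\rho$. For $\ell=p$ we need that $\RGamma_{\et}$ is de Rham (indeed potentially semistable), by Scholze/Colmez--Nizio\l{}/Bhatt--Morrow--Scholze comparison. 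We also need that the comparison is multiplicative, so that $D_{\pst}$ commutes with cup products. Forgetting back to $\obQ_\ell$-vector spaces recovers $\rho\otimes\obQ_\ell$, and formality descends along field extensions in characteristic $0$ (formality over $\obQ_\ell$ implies formality over $\bQ_\ell$, by a standard Massey/minimal-model descent argument).

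\textbf{Step 2 (splitting via monodromy filtration).} Monodromy-purity of $\hH^i$ means the following: the $N$-filtration on $\hH^i$ splits as a direct sum $\bigoplus_k \hH^i_{i+k}$, with Frobenius weight $i+k$ on the $\mathrm{gr}_k$-piece, and $N^k$ inducing isomorphisms $\mathrm{gr}_k\cong \mathrm{gr}_{-k}(-k)$. The key algebraic claim is then that the $\infty$-category of pure-type Weil--Deligne representations embeds in a semisimple-ish setting. Concretely, I would consider the full subcategory $\mathcal{P}$ of Weil--Deligne complexes whose $\hH^i$ is monodromy-pure of weight $i$. I would show that any such complex is canonically quasi-isomorphic to $\bigoplus \hH^i[-i]$ by a functorial, symmetric monoidal splitting. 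This is the main obstacle, and this is where $N$ makes things harder than Deligne's setting, since $\Ext^1$ between Weil--Deligne representations is nonzero (for example, $N$ itself provides extensions). The plan is to use the Frobenius weight grading to split $\varphi$-semisimplified objects into weight pieces. We first pass to Frobenius-semisimplification, which is harmless for the underlying algebra. We then use the Jacobson--Morozov $\mathfrak{sl}_2$-triple determined by $N$ on pure objects to define a second grading $w - i$. The combined grading ``Frobenius weight minus $\mathfrak{sl}_2$-weight'' equals cohomological degree $i$ on $\hH^i$. This grading is multiplicative because both weight and the $\mathfrak{sl}_2$-weight are additive under tensor products. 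Formally, I would construct a symmetric monoidal functor from pure Weil--Deligne representations to graded vector spaces such that $\rho$ lands in the subcategory where grading equals cohomological degree. Any $E_\infty$-algebra in graded complexes whose $\hH^i$ is concentrated in grading $i$ is formal: truncate by grade, and the projection to $\bigoplus\hH^i[-i]$ is an $E_\infty$-map. The difficulty is that the $\mathfrak{sl}_2$-action is not a priori defined at the cochain level. I would handle this by showing that the relevant weight-$(i-\text{degree})$ truncation functor is exact and monoidal on the heart-restricted subcategory, using that the Deligne weight filtration of $N$ is functorial and strictly compatible with morphisms of pure objects.

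\textbf{Step 3 (de Rham).} For $\ell=p$, $\RGamma_{\dR}(X/K)\otimes_K B_{\dR}\simeq\RGamma_{\et}\otimes B_{\dR}$ as $E_\infty$-algebras (Lemma~\ref{dr vs padic etale}). Formality of the étale side then gives formality of the de Rham side over $B_{\dR}$, and hence over $K$ by descent of formality along field extensions of characteristic-zero fields.
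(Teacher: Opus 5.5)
Your overall architecture is the paper's: a Weil--Deligne enhancement, a splitting by weights, descent of formality along field extensions, and Step~3, which is exactly Lemma~\ref{dr vs padic etale}. However, the two places where the real work lies are not handled.

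\emph{The splitting in Step~2.} There is no functorial weight grading on $\Rep_E WD_K$. This fails even on monodromy-pure objects and even after Frobenius-semisimplification. Take the nonsplit extension $0\to E(1)\to V\to E\to 0$; here necessarily $N\neq 0$. Then $V$ is monodromy-pure of weight $-1$, while $E(1)$ and $E$ are pure of weights $-2$ and $0$. So any functor placing pure weight-$i$ objects in grade $i$ kills the nonzero maps $E(1)\hookrightarrow V\twoheadrightarrow E$, and it cannot lift the faithful forgetful functor.

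Your proposed tools do not get around this. Jacobson--Morozov $\mathfrak{sl}_2$-gradings are not functorial: Deligne's dictionary with $W_K\times\SL_2$-representations is a bijection on isomorphism classes, not an equivalence of categories. Strictness of the monodromy filtration only concerns morphisms between pure objects of the \emph{same} weight, so it says nothing about extension classes. And an exact functor on a non-extension-closed subcategory of the heart induces nothing on $\cD^b_{\pure}(\Rep_E WD_K)$, whose objects are complexes of arbitrary Weil--Deligne representations.

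The missing idea is to pass to the abelian category $\Rep^{\mix}_E WD_K$. There, morphisms are strict for $W_\bullet$, and the canonical splitting of $W_\bullet$ is exact and symmetric monoidal. One then proves that $\cD^b_{\pure}(\Rep^{\mix}_E WD_K)\to\cD^b_{\pure}(\Rep_E WD_K)$ is an equivalence (Theorem~\ref{thm: dbpure is dbpure of mixed}). This is an $\Ext$-comparison with two ingredients:
\begin{itemize}
\item the formula $\RHom_{WD_K^{\mix}}(V_1,V_2)\simeq\RGamma\bigl(WD_K,W_0\underline{\Hom}(V_1,V_2)\bigr)$, proved by effaceability using the Koszul complex and Tate duality for $\RGamma(WD_K,-)$;
\item the identity $W_0\underline{\Hom}(V_1,V_2)=\underline{\Hom}(V_1,V_2)$ whenever $V_1\in\cD^{\geq n}_{\pure}$ and $V_2\in\cD^{\leq n}_{\pure}$.
\end{itemize}
This is exactly why maps like $E(1)\hookrightarrow V$ disappear: in the pure derived category they would live in $\Hom(E(1)[2],V[1])=\Ext^{-1}(E(1),V)=0$.

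\emph{The enhancement in Step~1.} $\RGamma_{\et}(X_{\bC_p},\bQ_\ell)$ is a priori an $E_\infty$-algebra in $\cD(G_K;\bQ_\ell)$. But $\DD$ is only defined on the derived category of an abelian category of finite-dimensional representations, so ``apply it to $\rho$'' requires a lift.

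For $\ell\neq p$, the lift is the full faithfulness of $\cD^b(\Rep_{\bQ_\ell}G_K)\to\cD(G_K;\bQ_\ell)$. This is Lemma~\ref{lem: repgk derived of its heart}, proved via local Tate duality and effaceability, and it fails for general profinite groups.

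For $\ell=p$ the problem is more serious. Knowing that each $\hH^i$ is de Rham and that $D_{\pst}$ respects cup products only controls the cohomology ring. It gives no $E_\infty$-algebra in $\cD^b(\Rep^{\dR}_{\bQ_p}G_K)$, because this is not a full subcategory of $\cD^b(\Rep_{\bQ_p}G_K)$: already $\Ext^1$ among de Rham representations is $\hH^1_g$, in general strictly smaller than $\hH^1_{\cont}(G_K,-)$. So the multiplication and its higher coherences need not lift. The paper supplies the lift via Theorem~\ref{thm: de rham derived category pullback}, which identifies $\cD^b(\Rep^{\dR}_{\bQ_p}G_K)$ with the fiber product of $\cD^b(\Rep_{\bQ_p}G_K)$ and $\cD_{\fil}(K)$ over $\cD(G_K;\BB^+_{\dR})$. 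The de Rham comparison, taken as an equivalence of filtered $E_\infty$-algebras, then yields the lift (Proposition~\ref{prop: qp etale cohomology complex is de rham}).

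Finally, for $\ell=p$ the underlying complex of $\DD(\rho)$ is a $\bQ_p^{\nr}$-complex, not $\rho\otimes\obQ_p$. You have to identify the two after base change to $\BB_{\dR}$ before descending formality.
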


{We refer to Definition~\ref{defn:weight-monodromy} for the precise meaning of the phrase ``the weight-monodromy conjecture holds for $\hH^i_\et(X_{\bC_p}, \QQ_\ell)$''. We also note that the example of a Hopf surface \cite[p. 16]{scholze-cdm} shows that the weight-monodromy conjecture is false for a general smooth proper rigid-analytic space. }Moreover, we find examples of smooth proper rigid-analytic spaces whose cohomology algebras are not formal:

\begin{thm}[{Proposition~\ref{main}}]\label{thm:main-thm} There exists a smooth proper rigid-analytic surface $X$ over $\bQ_p$ such that the $E_{{\infty}}$-algebra $\rR\Gamma_\et(X_{\bC_p}, \bQ_\ell)$ is non-formal for every prime $\ell$ (including $\ell=p$). Furthermore, the $E_{{\infty}}$-algebra $\rR\Gamma_\dR(X/\bQ_p)$ is also not formal. 
\end{thm}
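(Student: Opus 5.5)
We will build $X$ so that its cohomology ring has a nonvanishing triple Massey product, which is an obstruction to formality. The model is the Hopf surface $X=(\mathbf{A}^2\setminus\{0\})^{\an}/q^{\bZ}$, or better a variant quotient of a Kodaira-type surface. For a Hopf surface the cohomology is that of $S^1\times S^3$, which is formal, so we need a space whose ``topology'' is a nilmanifold. The Heisenberg nilmanifold is the basic non-formal example, and it has a nonzero Massey product $\langle a,a,b\rangle$ or $\langle a,b,b\rangle$ in $\hH^2$.

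\emph{Construction.} Take a Tate elliptic curve $E=\mathbf{G}_m^{\an}/q^{\bZ}$ over $\bQ_p$. Let $X$ be the quotient of the total space of a $\mathbf{G}_m^{\an}$-torsor $L^\times$ over $E$, with $L$ of nonzero degree, by the action of $q'^{\bZ}$ on the fibers. This is a primary Kodaira-type surface: a smooth proper elliptic fibration over $E$ with fibers $\mathbf{G}_m/q'^{\bZ}$, nontrivial as a torus bundle. Alternatively, one can realize $X$ as a quotient of $(\mathbf{G}_m^{\an})^2$ by a discrete group isomorphic to $\bZ^2$ acting through a unipotent (Heisenberg-type) twist. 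Properness and smoothness follow by étale-local checks, since the fibration is étale-locally a product.

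\emph{Cohomology.} We compute $\rR\Gamma_\et(X_{\bC_p},\bQ_\ell)$ via the Leray spectral sequence for $f\colon X\to E$. Equivalently, and more cleanly, we write $X$ as a quotient of a simply connected (for $\ell$-adic purposes, contractible-cohomology) analytic space $\Omega$ by a discrete group $\Gamma$ equal to the integral Heisenberg group. Both $(\mathbf{G}_m^{\an})^2\to$ quotient steps are handled by the fact that $\mathbf{G}_m^{\an}/q^{\bZ}$ has the cohomology of $\BB\bZ$ plus Tate twists (the Tate uniformization). We then get $\rR\Gamma_\et(X_{\bC_p},\bQ_\ell)\simeq C^*(\Gamma',\bQ_\ell)$ as $E_\infty$-algebras, with $\Gamma'$ a lattice in a 4-dimensional solvable or nilpotent group (Kodaira surface: $\mathrm{Heis}_3\times\bZ$). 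Concretely, each $\mathbf{G}_m$ contributes a $\BB\bZ$ via $\RGamma(\mathbf{G}_m^{\an}\text{-torsor quotient})$, and the gluing class is the Chern class of $L$. For the comparison as algebras, we use the Cartan--Leray / Hochschild--Serre spectral sequence at the level of cochains, noting that $\Omega=L^\times$ over the universal cover of $E$ is a $\mathbf{G}_m$-torsor over $\mathbf{G}_m$ with cohomology computable and the $q^{\bZ}$ actions are unipotent. By Nomizu, the Chevalley--Eilenberg algebra of the Heisenberg Lie algebra $\times\bQ$ has a nontrivial Massey product, so $X$ is non-formal for every $\ell$. For de Rham cohomology, we compute $\RGamma_\dR(X/\bQ_p)$ by the same group-cochain description: the de Rham cohomology of $\mathbf{G}_m^{\an}$ quotient by $q^{\bZ}$ is $C^*(\bZ,\bQ_p)\otimes$ a Tate-type piece via the Hyodo--Kato / Čech description of the covering. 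Alternatively, we use the $p$-adic comparison with $B_\dR$: formality of $\RGamma_\dR\otimes B_\dR\simeq\RGamma_\et\otimes B_\dR$ would force formality of the étale side after base change, and formality descends along field extensions.

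\emph{Main obstacle.} The hard step is identifying the $E_\infty$-structure, not just the cohomology groups, with group cochains of $\Gamma'$. The plan is to use that $\Omega\to X$ is a Galois cover with group $\Gamma'$ whose total space has $\ell$-adic cohomology $\bQ_\ell$ in degree $0$ only, after the $\mathbf{G}_m$ factors are also uniformized via Kummer/pro-étale towers. This gives $\RGamma(X)\simeq\RGamma(\Gamma',\RGamma(\Omega))\simeq C^*(\Gamma',\bQ_\ell)$ multiplicatively, by descent as $E_\infty$-algebras. The torsion subtleties in $\mathbf{G}_m$ (whose $\ell$-adic cohomology is $\bQ_\ell\oplus\bQ_\ell(-1)[-1]$, not trivial) are handled by replacing $\mathbf{G}_m$ with its cohomological model $\BB\bZ_\ell(1)$. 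Finally, the Massey product computation is a routine Heisenberg-algebra calculation.
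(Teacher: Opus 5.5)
Your construction (a fiberwise dilation quotient of $L^\times$, i.e.\ a $p$-adic primary Kodaira surface), the Massey product in the cohomology of the Heisenberg group, and the de Rham step (the $\BB_\dR$-comparison plus descent of formality along field extensions) all match the paper. The gap is the step you yourself call the main obstacle: the equivalence of $E_\infty$-algebras $\RGamma_\et(X_{\bC_p},\bQ_\ell)\simeq C^*(\mathrm{Heis}_3\times\bZ,\bQ_\ell)$. The mechanism you propose does not produce it.

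The discrete uniformization you actually describe, $(\bG_m^{\an})^2\to X$, has abelian deck group $\bZ^2$. Its total space has nonzero $\bQ_\ell$-cohomology in degrees $1$ and $2$. If you kill that cohomology with Kummer towers, as you suggest, the deck group becomes an extension of $\bZ^2$ by $\bZ_\ell(1)^2$. This is a mixed discrete/profinite group, not the integral Heisenberg group times $\bZ$. So even granting continuous-cochain descent for this pro-\'etale cover (which needs care, especially for $\ell=p$ on the non-quasi-compact $\bG_m^{\an}$), you would still need one of two things. Either an $E_\infty$-comparison between continuous $\bQ_\ell$-cochains of that group and $C^*(\mathrm{Heis}_3\times\bZ,\bQ_\ell)$, or a direct Massey product computation in its continuous cohomology. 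Neither is supplied.

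The Leray or Hochschild--Serre spectral sequences do not fill this in. Together with the unipotent action on $\hH^*((\bG_m^{\an})^2)$, they determine only the cohomology ring. A Massey product depends on the full equivariant $E_\infty$-structure on the cochains of the cover, which you never pin down.

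The paper avoids all of this by never uniformizing beyond the single dilation. Take $Y=(\Tot_E L)\setminus E$, an algebraic variety on which $\bZ$ acts by algebraic automorphisms. Then $\RGamma_\et(X_{\bC_p},\bQ_\ell)\simeq\RGamma\bigl(\bZ,\RGamma_\et(Y^{\an}_{\bC_p},\bQ_\ell)\bigr)$. Three comparisons then give a $\bZ$-equivariant equivalence of $E_\infty$-algebras $\RGamma_\et(Y^{\an}_{\bC_p},\bQ_\ell)\simeq\RGamma_\sing(Y^{\an}_{\bC}(\bC),\bQ_\ell)$:
\begin{itemize}
\item the comparison between rigid-analytic and algebraic \'etale cohomology, valid for every $\ell$ including $p$ in characteristic $0$;
\item invariance of \'etale cohomology under change of algebraically closed field;
\item Artin's comparison with singular cohomology.
\end{itemize}
Applying $\RGamma(\bZ,-)$ identifies $\RGamma_\et(X_{\bC_p},\bQ_\ell)$ with the singular cochains of the complex primary Kodaira surface. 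That surface is an honest $K(\Gamma\times\bZ,1)$, by the $\bC^\times$-fibration over $E$, centrality of the extension, and the Leray differential given by $c_1(L)$. So the coherence problem disappears, no profinite groups enter, and $\ell=p$ needs no extra work. With this replacement for your cohomology step, the rest of your argument, including the de Rham part, goes through.
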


A simpler special case of Theorem \ref{thm: wm implies formality intro} consists of smooth proper rigid-analytic spaces with good reduction. They satisfy the weight-monodromy conjecture by the Weil conjectures, and their $\ell$-adic cohomology algebras are formal as a consequence of the fact that Frobenius eigenvalues on different cohomology groups are distinct, cf. \cite[Corollaire 5.3.7]{weil2}.

Theorem \ref{thm: wm implies formality intro} applies to algebraic varieties as well, though it is only of academic interest in this case: even if the variety in question has bad reduction at $p$, we can spread it out and reduce modulo another prime of good reduction to deduce formality of cohomology from the Weil conjectures. This spreading out technique is notably unavailable for rigid-analytic spaces.

As an important technical step towards the proof of Theorem~\ref{thm: wm implies formality intro} for $\ell=p$, we give a description of the derived category of de Rham representations that mimics the definition of the abelian category of de Rham representations. {We denote by $\Rep_{\QQ_p} G_K$ the category of continuous finite-dimensional $\QQ_p$-linear representations of the absolute Galois group $G_K$, and by $\Rep^{\dR}_{\QQ_p} G_K$ its full subcategory of de Rham representations. We write $\cD(\Rep_{\QQ_p} G_K)$ and $\cD(\Rep^\dR_{\QQ_p} G_K)$ for their respective derived $\infty$-categories. Finally, $\cD_{\fil}(G_K;\BB_{\dR})$ denotes an appropriate version of the filtered derived $\infty$-category of condensed $\BB_\dR$-semilinear $G_K$-representations, where $\BB_{\dR}$ is endowed with the usual $\bZ$-indexed filtration $\Fil^i\BB_{\dR}=t^i\BB_{\dR}^+$ (see Notation~\ref{notation:B+dr-reps} for a precise definition).}

\begin{thm}[{{Corollary~\ref{cor:de-rham}}}]\label{thm:de-rham-intro} Let $p$ be a prime number, let $K$ be a finite extension of $\QQ_p$, and let $G_K$ be its absolute Galois group. Then there is an equivalence of symmetric monoidal $\infty$-categories
\[
\cD^b\bigl(\Rep^{\dR}_{\QQ_p} G_K\bigr) \simeq \cD_{\fil}(K)\times_{\cD_{\fil}(G_K;\BB_{\dR})} \cD^b\bigl(\Rep_{\QQ_p} G_K\bigr),
\]
where $\cD_\fil(K)$ is the filtered derived $\infty$-category of $K$-vector spaces.  
\end{thm}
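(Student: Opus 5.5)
The natural functors are the following. The functor $\cD_{\fil}(K)\to \cD_{\fil}(G_K;\BB_{\dR})$ sends $V$ to $V\otimes_K \BB_{\dR}$ with the tensor product filtration. The functor $\cD^b(\Rep_{\QQ_p}G_K)\to \cD_{\fil}(G_K;\BB_{\dR})$ sends $W$ to $W\otimes_{\QQ_p}\BB_{\dR}$ with filtration $W\otimes \Fil^\bullet\BB_{\dR}$. On $\cD^b(\Rep^{\dR}_{\QQ_p}G_K)$ we have the filtered functor $D_{\dR}$, and the comparison isomorphism $D_{\dR}(W)\otimes_K\BB_{\dR}\simeq W\otimes_{\QQ_p}\BB_{\dR}$ is filtered and $G_K$-equivariant. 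The plan is to exploit this comparison in four steps.

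\textbf{Step 1: construct the comparison functor.} The pair $(D_{\dR}, \text{inclusion})$ together with the comparison isomorphism gives a symmetric monoidal functor
\[
\Phi\colon \cD^b(\Rep^{\dR}_{\QQ_p}G_K)\to \cD_{\fil}(K)\times_{\cD_{\fil}(G_K;\BB_{\dR})}\cD^b(\Rep_{\QQ_p}G_K).
\]
To build it coherently, I would first note that $D_{\dR}$ is exact on de Rham representations and symmetric monoidal, and that the comparison isomorphism is a natural symmetric monoidal isomorphism of exact functors of abelian categories. Passing to bounded derived $\infty$-categories, for instance via the universal property of $\cD^b$ of an abelian category or through explicit dg-models, then gives the functor into the pullback.

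\textbf{Step 2: essential surjectivity.} An object of the fibre product is a triple $(V,W,\alpha)$, where $V$ is filtered and $\alpha\colon V\otimes\BB_{\dR}\simeq W\otimes\BB_{\dR}$ is a filtered equivalence. I would argue by induction on the amplitude of $W$, using truncations. On graded pieces, $\alpha$ gives $G_K$-equivariant equivalences $\bigoplus_j \gr^jV\otimes \bC_p(i-j)\simeq W\otimes\bC_p(i)$. Taking cohomology and using that the cohomology of $W$ consists of Galois representations, the filtered comparison shows that each $\hH^n(W)$ is de Rham. The key input is that filtered equivalence with a $K$-vector space is detected on $\BB^+_{\dR}$-lattices, which ties the cohomology groups to the abelian-level definition. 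Since $\cD^b(\Rep^{\dR})\to\cD^b(\Rep)$ lands in the objects with de Rham cohomology, this establishes the essential image, provided fully faithfulness holds.

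\textbf{Step 3: full faithfulness.} It suffices to compare mapping spectra between $W_1$ and $W_2$ placed in single degrees. In the fibre product the mapping spectrum is the pullback
\[
\Map_{\fil}(D_{\dR}W_1,D_{\dR}W_2)\times_{\Map(W_1\otimes\BB_{\dR},\,W_2\otimes\BB_{\dR})_{\fil,G_K}}\RHom_{G_K}(W_1,W_2).
\]
Reducing to $W_1=\QQ_p$ by internal Homs, we must show that for a de Rham $W$,
\[
\RHom_{\Rep^{\dR}}(\QQ_p,W)\simeq \RGamma(G_K,W)\times_{\RGamma(G_K,\Fil^0(W\otimes\BB_{\dR}))}\Fil^0 D_{\dR}(W).
\]
In other words, the fibre of $\RGamma(G_K,W)\to \RGamma(G_K,\Fil^0(W\otimes \BB_{\dR}))/\Fil^0D_{\dR}(W)$ must compute Ext in the de Rham category. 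Here I would use $\RGamma(G_K,\Fil^0\BB_{\dR}\otimes W)$, whose $\hH^0$ is $\Fil^0D_{\dR}$ and whose $\hH^1$ is $D_{\dR}/\Fil^0$ twisted, via Tate's computations of the cohomology of $\bC_p(i)$. Combined with the Bloch--Kato description, extensions in $\Rep^{\dR}$ form the group $\hH^1_g$, the kernel of $\hH^1(G_K,W)\to \hH^1(G_K,W\otimes\BB_{\dR})$. Then $\hH^2$ must match: $\Ext^2$ in $\Rep^{\dR}$ has to coincide with what the pullback gives, and the higher Exts must vanish appropriately.

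\textbf{Main obstacle.} Step 3 in degrees $\geq 2$ is the hardest part: controlling $\Ext^{\geq 2}$ in the abelian category $\Rep^{\dR}$, which is not closed under extensions in $\Rep$. I would first try a Yoneda/effaceability argument, showing that every class in $\hH^1_g$ becomes trivial after enlarging by de Rham extensions. This would prove that the derived functor $\cD^b(\Rep^{\dR})\to\cD^b(\Rep)$ has the predicted fibre, with the mapping fibre computed in $\cD_{\fil}(G_K;\BB_{\dR})$ via the vanishing of the $G_K$-cohomology of $\bC_p(i)$ for $i\neq 0$.
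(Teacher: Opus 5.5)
Your set-up of Step~3 is right, apart from a slip: $\hH^1_\cont(G_K,W\otimes_{\QQ_p}\BB_\dR^+)$ is non-canonically $\Fil^0D_\dR(W)$, not a twist of $D_\dR(W)/\Fil^0$. Effacing a class of $\hH^1_g$ by the de Rham extension it classifies is exactly the degree-one case.

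\textbf{The gap: degree two.} You treat degree two as if it followed from degree one, and it does not. Since $\Rep^\dR_{\QQ_p}G_K$ does not have enough injectives, the universal $\delta$-functor argument also needs the degree-two group of the fibre product, say $\hH^2(\cD_K,V)$, to be effaceable by injections into de Rham representations. Effacing degree one only yields an isomorphism on $\Ext^1$ and injectivity on $\Ext^2$. From your own pullback formula, $\hH^2(\cD_K,V)\simeq\hH^1_\cont\bigl(G_K,V\otimes_{\QQ_p}(\BB^+_\dR/\QQ_p)\bigr)$. This is an extension of $\hH^2_\cont(G_K,V)$ by $\coker\bigl(\hH^1_\cont(G_K,V)\to\hH^1_\cont(G_K,V\otimes_{\QQ_p}\BB^+_\dR)\bigr)$. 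The cokernel term is not Galois cohomology: for a non-split crystalline extension $V$ of $\QQ_p$ by $\QQ_p(1)$, one has $\hH^2_\cont(G_K,V)=0$ while the cokernel is one-dimensional. So effacing $\hH^2_\cont(G_K,-)$ via local Tate duality, as for $\cD^b(\Rep_{\QQ_p}G_K)$, does not suffice. The vanishing of $\hH^*(G_K,\bC_p(i))$ for $i\neq 0$ does not touch this term either. Your outline gives no description of this group and no mechanism for killing it.

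\textbf{How the paper closes it.} First, Fontaine's Tate duality for almost $\bC_p$-representations, computed via equivariant coherent sheaves on the Fargues--Fontaine curve, identifies $\hH^1_\cont\bigl(G_K,V\otimes(\BB^+_\dR/\QQ_p)\bigr)$ with $\bigl(D_\cris(V^\vee(1))^{\varphi=1}\bigr)^\vee$. Second, one effaces the dual by de Rham surjections that kill $D_\cris$. Suppose $N_V^r=0$ on $D_\stab(V)$, and let $T$ be the semi-stable Kummer extension of $\QQ_p$ by $\QQ_p(1)$ attached to $p$. Then $\Sym^rT\otimes V\twoheadrightarrow V$ is zero on $D_\cris=\ker N$, since an element $\sum_i x^i\otimes v_i$ of $\ker N_W$ maps to $v_0=(-1)^rN_V^r(v_r)=0$.

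\textbf{A secondary gap in Step~2.} Graded pieces cannot see a constantly filtered summand of the filtered $K$-complex, since all its $\gr^j$ vanish. So to identify the heart of the fibre product with $\Rep^\dR_{\QQ_p}G_K$, you must also show that $H=\lim_n\Fil^n$ vanishes when the Galois side sits in degree $0$. The paper does this by noting that otherwise $H\otimes_K\BB_\dR$ would be a coherent $\BB^+_\dR$-module. Once the heart is identified and full faithfulness holds in all degrees, essential surjectivity follows automatically.
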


We refer to Corollary~\ref{cor:de-rham} for the precise formulation of this theorem, including the construction of all functors involved in the theorem. Theorem~\ref{thm:de-rham-intro}, combined with the de Rham comparison theorem, allows us to lift $\rR\Gamma_{\et}(X_{\bC_p}, \QQ_p)$ to an $E_\infty$-algebra in $\cD^b\bigl(\Rep^{\dR}_{\QQ_p} G_K\bigr)$, cf. Proposition \ref{prop: qp etale cohomology complex is de rham}. We expect that one can alternatively do this by combining the results of \cite{Deglise-Niziol} with the semi-stable comparison theorem of \cite{Colmez-Niziol-I}, {see Remark \ref{rem: wiesia knows it}.}

We conclude by raising the following question. 

\begin{question}\label{question} Let $K$ be a finite extension of $\QQ_p$, let $\ell$ be a prime number, and let $\cX$ be an admissible formal $\cO_K$-scheme such that its special fiber $\cX_s$ is projective and its adic generic fiber $X\coloneqq \cX_\eta$ is smooth over $K$. Is the $E_\infty$-algebra $\rR\Gamma_\et(X_{\bC_p}, \QQ_\ell)$ formal?
\end{question}

Hansen--Li \cite[Question 1.2]{Hansen-Li} and Scholze \cite[Conjecture 2.4]{scholze-rio} proposed that admitting a formal model with projective reduction could be the correct rigid-analytic analog of the K\"ahler condition in complex geometry. Specifically, they expected that the existence of such a model would imply Hodge symmetry on the adic generic fiber. While the first-named author answered this particular conjecture in the negative \cite{Petrov-counterexample}, one might still ask whether this condition serves as the correct $p$-adic analog from the point of view of other cohomological properties exhibited by K\"ahler manifolds. 

 Question~\ref{question} thus provides a natural further test. Notably, a positive answer to \cite[Conjecture 1.4.7]{Hansen-Zavyalov}, combined with Theorem~\ref{thm: wm implies formality intro}, would affirmatively answer Question~\ref{question}. Consequently, a counterexample to Question~\ref{question} would also disprove the somewhat optimistic \cite[Conjecture 1.4.7]{Hansen-Zavyalov}.

\subsection*{{Idea of the proof.}}
To explain the idea of the proof of Theorem \ref{thm: wm implies formality intro}, recall the following point of view on the formality theorem over $\bC$ provided by the theory of mixed Hodge complexes. For an algebraic variety $X$ over $\bC$, \cite[4.1]{beilinson-absolute-hodge} naturally promotes $\RGamma(X(\bC),\bQ)$ to an object of the derived category $\cD^b(\MHS_{\bQ})$ of the abelian category of mixed Hodge structures. Applying Deligne's fundamental observation that the weight filtration on an $\bR$-mixed Hodge structure comes with a natural splitting, we obtain a decomposition of the underlying complex $\RGamma(X(\bC),\bR)\in \cD(\Vect_{\bR})$ into weight pieces, which is compatible with the algebra structure. When $X$ is smooth proper, $\hH^i(X(\bC),\bR)$ is pure of weight $i$, hence this decomposition is exactly a formality isomorphism.

To prove Theorem \ref{thm: wm implies formality intro}, we show a general formality statement for $E_{\infty}$-algebras in the derived category of Weil--Deligne representations. We denote by $\cD^b_{\pure}(\Rep_{E} WD_K)$ the full subcategory of the bounded derived $\infty$-category of (finite-dimensional) $E$-linear Weil--Deligne representations consisting of objects $M$ such that $\hH^i(M)$ is monodromy-pure of weight $i$, for every $i\in\bZ$. Here $E$ is any coefficient field of characteristic zero.

\begin{thm}[{Corollary~\ref{cor: pure wd algebra is formal}}]\label{thm:intro-pure wd algebra is formal} Let $K$ be a finite extension of $\QQ_p$, let $E$ be a field of characteristic $0$, and let $A$ be an $E_\infty$-algebra in $\cD^b_{\pure}(\Rep_E WD_K)$. Then there exists an equivalence $A\simeq\bigoplus \hH^i(A)[-i]$ of $E_{\infty}$-algebras in $\cD(\Vect_E)$.
\end{thm}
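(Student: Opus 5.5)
Following the mixed Hodge analogy described above, the plan is to produce a splitting of the monodromy weight filtration that is compatible with tensor products, and then use purity to identify weight pieces with cohomological degrees.

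\textbf{Reduction to a filtration-free category.} First, I would reduce to the case where $E$ is large enough to contain all relevant Frobenius eigenvalues. Formality of an $E_\infty$-algebra over $E$ can be detected after extending scalars to a field extension $E'/E$. The reason is that the space of formality equivalences is governed by Harrison/André--Quillen obstruction groups, which commute with flat base change, so formality descends. Alternatively, one can use the criterion of \cite{Obstructions}.

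\textbf{Constructing the weight grading.} A Weil--Deligne representation is a pair $(\rho,N)$, where $\rho$ is a representation of the Weil group $W_K$ and $N\rho(g)=q^{-v(g)}\rho(g)N$. The key construction is a symmetric monoidal exact functor
\[
\gr\colon \Rep_E WD_K\to \Vect_E^{\bZ\text{-gr}}.
\]
It should have two properties: it is naturally isomorphic, after forgetting the grading, to the forgetful functor, and on a monodromy-pure object of weight $i$ it is concentrated in degree $i$.

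I would build it as follows. Choose a Frobenius lift $\varphi$. For each $v$, fix a choice of $q$-Weil weights, for instance via a homomorphism $\bar E^\times\to \bQ$ or an embedding into $\bC$ and $|\cdot|$; this requires care, and I would instead use the monodromy filtration directly. Concretely:
\begin{enumerate}
\item Decompose the semisimplification of $\varphi$ into generalized eigenspaces, grouped by the absolute value of the eigenvalue under some fixed field embedding $\iota\colon E\to\bC$. This is where the base change to a large field is used.
\item Split the monodromy filtration by a Jacobson--Morozov $\mathfrak{sl}_2$-triple, chosen functorially. Deligne's theory gives the weight filtration $W_\bullet$ as the convolution of the eigenvalue weights with the monodromy filtration of $N$.
\end{enumerate}

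\textbf{Expected main obstacle.} Functorial, tensor-compatible splitting is the hardest step. A naive Jacobson--Morozov splitting is not canonical. My first attempt would be Deligne's trick: the relative monodromy filtration has a unique splitting $Y$ such that $N$ has degree $-2$ and $Y$ is stable under an auxiliary semisimple operator. This gives a $\bG_m$-action commuting with everything except $N$, and it is compatible with $\otimes$, as in Deligne's canonical splitting for $\bR$-MHS.

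\textbf{Passing to derived categories and concluding.} Because $\gr$ is exact and symmetric monoidal, and the forgetful functor factors as $\gr$ followed by totalization, it extends to bounded derived $\infty$-categories as a symmetric monoidal functor. It then sends $A$ to an $E_\infty$-algebra $\gr A$ in graded complexes, with underlying object equivalent to $A$ as an $E_\infty$-algebra. By purity, $\hH^i(\gr A)$ is concentrated in weight $i$. Consequently the weight-$w$ part of $\gr A$ has cohomology only in degree $w$, so it is equivalent to $\hH^w(A)[-w]$. This identification is compatible with multiplication, since multiplication respects the grading, and so yields $A\simeq\bigoplus\hH^i(A)[-i]$ as $E_\infty$-algebras.

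Formally, this last step uses the symmetric monoidal $t$-structure truncation on graded complexes: take the connective cover in the "diagonal" $t$-structure (degree $\le$ weight), then compare with its heart-level truncation. Both comparison maps are $E_\infty$ quasi-isomorphisms.
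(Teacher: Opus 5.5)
\textbf{There is a genuine gap.} Your proof hinges on a functor $\gr\colon\Rep_E WD_K\to\Vect_E^{\bZ}$ with two properties: after forgetting the grading it is isomorphic to the forgetful functor, and it puts a monodromy-pure object of weight $i$ in degree $i$. No such functor exists, even without asking for exactness or monoidality.

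Take the non-split extension $0\to E(1)\to V\to E\to 0$ with $N\neq 0$ from the example after Definition~\ref{defn:mixed-monodromy-pure}. In the paper's normalization, $V$ is monodromy-pure of weight $-1$. Its subobject $E(1)$, with $N=0$, is monodromy-pure of weight $-2$. Your functor would send the inclusion $E(1)\hookrightarrow V$ to a graded map from an object in degree $-2$ to an object in degree $-1$, i.e.\ to zero. But after forgetting the grading it must be the nonzero inclusion.

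So no grouping of Frobenius eigenvalues, functorial Jacobson--Morozov triple, or Deligne-type splitting can achieve this on $\Rep_E WD_K$: morphisms of Weil--Deligne representations need not respect weights. Deligne's canonical splitting for $\bR$-mixed Hodge structures works only because there the weight filtration is part of the data and morphisms must preserve it.

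\textbf{The missing idea} is to replace $\Rep_E WD_K$ by the abelian category $\Rep^\mix_E WD_K$ of mixed Weil--Deligne representations. There $W_\bullet$ is extra data, the forgetful functor to $\Vect_E$ is symmetric monoidally isomorphic to $\bigoplus_i\gr^W_i$ (Betts--Litt), and the inclusion above is not a morphism. The substantive step is then Theorem~\ref{thm: dbpure is dbpure of mixed}: the forgetful functor $\cD^b_\pure(\Rep^\mix_E WD_K)\to\cD^b_\pure(\Rep_E WD_K)$ is a symmetric monoidal equivalence, so $A$ lifts to an $E_\infty$-algebra carrying a weight filtration. Its proof has two ingredients:
\begin{enumerate}
\item Lemma~\ref{lemma:exts in wdmix} computes $\RHom$ in the mixed derived category as $\RGamma\bigl(WD_K,W_0\underline{\Hom}(V_1,V_2)\bigr)$, by an effaceability argument.
\item Lemma~\ref{lemma:equality-of-homs-weights} shows $W_0\underline{\Hom}(V_1,V_2)=\underline{\Hom}(V_1,V_2)$ when $V_1$ lies in degrees $\geq n$ and $V_2$ in degrees $\leq n$.
\end{enumerate}
This matches the example: in $\cD_\pure$ the two objects sit as $E(1)[2]$ and $V[1]$, and $\Hom(E(1)[2],V[1])=\Ext^{-1}(E(1),V)=0$. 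So the bad morphism disappears exactly where it needs to.

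Once $A$ has been lifted to the mixed category, your final step applies. Your diagonal $t$-structure argument is a valid variant of the paper's observation in Lemma~\ref{lem: dpure splitting underlying complex} that $\cD_\pure(\Vect^{\bZ}_E)$ is a $1$-category. The preliminary base change enlarging $E$ is unnecessary.
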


Theorem \ref{thm:intro-pure wd algebra is formal} is proven by showing in Theorem \ref{thm: dbpure is dbpure of mixed} that every object of $\cD^b_{\pure}(\Rep_E\WD_K)$ lifts naturally to an object of the derived category of the abelian category of {\it mixed} Weil--Deligne representations. The desired decomposition then follows from the fact that the weight filtration on a mixed Weil--Deligne representation is naturally split on the level of underlying vector spaces, and the splitting is compatible with tensor products.

The case $\ell\neq p$ in Theorem \ref{thm: wm implies formality intro} now readily follows, using Lemma \ref{lem: repgk derived of its heart} to lift $\RGamma_{\et}(X_{\bC_p},\bQ_{\ell})$ to an $E_{\infty}$-algebra in $\cD^b(\Rep_{\bQ_{\ell}}WD_{K})$.

To prove formality of $\bQ_p$-\'etale cohomology, we apply Fontaine's functor $D_{\pst}$ to the complex $\RGamma_{\et}(X_{\bC_p}, \bQ_p)$, to be able to use the same general result about Weil--Deligne representations. We are in a position to apply the functor $D_{\pst}$ to the \'etale cohomology complex, because we have upgraded it to an object of the derived category $\cD^b(\Rep_{\bQ_p}^{\dR}G_K)$ of the abelian category of de Rham Galois representations, using Theorem \ref{thm:de-rham-intro}, as described above.

Examples in Theorem \ref{thm:main-thm} showing that formality does not always hold for rigid-analytic spaces are constructed by adapting a familiar construction from complex geometry. One of the best-known examples of a compact complex (necessarily non-K\"ahler) manifold whose cohomology is not formal is the Iwasawa manifold \cite[p. 261]{deligne-griffiths-morgan-sullivan}, but its construction does not have a $p$-adic analog.

Instead, we use that primary Kodaira surfaces do have $p$-adic analogs. We show that their \'etale cohomology algebras are equivalent to those of their complex counterparts. In particular, they are not formal because the cohomology algebra contains the cohomology of the Heisenberg group as a tensor factor.

\subsection*{Acknowledgements} We thank Bhargav Bhatt for bringing this problem to our attention and for useful discussions. The idea that the weight-monodromy conjecture might be related to formality is due to him. We are also grateful to Vadim Vologodsky for a very useful discussion, and to Federico Scavia for an instructive example related to Lemma~\ref{lem: repgk derived of its heart}. A.P. was supported by the Clay Research Fellowship. Part of this research was conducted during the authors' participation in the IAS special year on $p$-adic geometry.

We are grateful to the following institutions for their financial support and for providing excellent
working conditions while we completed parts of this project: the School of Mathematics of the Institute for
Advanced Study (A.P. and B.Z.), MIT (A.P.), Princeton University
(B.Z.), and the University of Maryland (B.Z.).  
\section{Recollections on formality}

For a commutative ring $R$, the category of $E_{\infty}$-algebras over $R$ is, by definition, the category of commutative algebra objects in the symmetric monoidal derived $\infty$-category $D(R)$ of complexes of $R$-modules. Given any Grothendieck site $\cC$ with a sheaf $\cO$ of $R$-algebras, the cohomology complex $\RGamma(\cC,\cO)\in D(R)$ can be naturally enhanced with an $E_{\infty}$-algebra structure, using the lax symmetric monoidal structure on the functor $\RGamma(\cC,-)$. 

When $R$ is a $\bQ$-algebra (as is always the case in this paper), the category of $E_{\infty}$-algebras over $R$ is equivalent, via the Dold--Kan correspondence, to the $\infty$-category of commutative differential graded $R$-algebras localized at quasi-isomorphisms; see \cite[Proposition 7.1.4.11]{lurie-ha}. For example, the fact that the singular cohomology complex $\RGamma_{\sing}(X,\bQ)$ of any topological space $X$ can be naturally represented by a commutative differential graded $\bQ$-algebra goes back to Sullivan, cf. \cite[\S 2]{deligne-griffiths-morgan-sullivan}.

We say that an $E_{\infty}$-algebra $A$ is formal if it is equivalent to $\bigoplus\limits_{i\in\bZ} \hH^i(A)[-i]$, where the latter graded commutative algebra is viewed as an $E_{\infty}$-algebra. If $R$ is a $\bQ$-algebra and $A$ is represented by a commutative differential graded algebra $A^{\bullet}=\ldots \xrightarrow{d}A^i\xrightarrow{d} A^{i+1}\xrightarrow{d}\ldots$, then formality is equivalent to the existence of another commutative differential graded algebra $B^{\bullet}$ admitting maps $B^{\bullet}\to A^{\bullet}$ and $B^{\bullet}\to (\hH^{\bullet}(A),0)$ that induce isomorphisms on cohomology, {as one shows using semi-free resolutions \cite[13.4]{drinfeld-dg}}.

\section{Example}

Let $K$ be a finite extension of $\bQ_p$, and denote by $\bC_p = \widehat{\overline{K}}$ the completion of its algebraic closure. We choose any elliptic curve $E$ over $K$ and a line bundle $L$ of degree $1$ on $E$. Consider its total space $Y \coloneqq (\Tot_E L)\setminus E$ with the zero section removed. Note that this is an example of a smooth non-proper algebraic variety whose cohomology is not formal, cf. \cite[Example on p. 203]{morgan}. We will now turn it into a proper rigid-analytic space by taking an appropriate quotient. This smooth variety is equipped with an action of $\bG_m$ given by dilation along the fibers. In particular, $Y$ admits an action of $\bZ$ via a homomorphism $\bZ \to \bG_m$ sending $1$ to $p$.

\begin{construction} We denote by $X \coloneqq Y^\an/p^\bZ$ the rigid-analytic space over $K$ obtained as the quotient of the analytification of $Y$ by the action of $\bZ$ described above.
\end{construction}

By construction, $X \to E^\an$ is a (non-trivial, as we will see) analytic torsor for the constant rigid-analytic group $\bG_m^\an/p^{\bZ}$. The latter group is a Tate elliptic curve, see \cite[Theorem 9.7.3/3]{BGR}\footnote{Strictly speaking, \cite[Theorem 9.7.3/3]{BGR} assumes that $p\neq 2$. However, this is not needed to justify that $\bG_m^\an/p^{\bZ}$ is an elliptic curve.}. In particular, $X$ is a smooth proper (geometrically connected) rigid-analytic surface over $K$. This construction goes back at least to \cite[\S 6 a)]{ueno}, see also \cite[3.4]{wilke} for a slightly different presentation of the same surface.

The main result of this section is that cohomology algebras of $X$ are not formal:

\begin{pr}\label{main}
For all primes $\ell$ (including $\ell=p$) the $E_{\infty}$-algebra $\RGamma_{\et}(X_{\bC_p},\bQ_{\ell})$ is not equivalent to $\bigoplus\limits_{i\geq 0} \hH^i_{\et}(X_{\bC_p},\bQ_{\ell})[-i]$. Also, the de Rham cohomology $E_{\infty}$-algebra $\RGamma_{\dR}(X/K)$ is not equivalent to $\bigoplus\limits_{i\geq 0}\hH^i_{\dR}(X/K)[-i]$.
\end{pr}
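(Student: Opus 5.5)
The strategy is to compute the cohomology algebra of $X$ and exhibit a nonvanishing Massey product, which is an obstruction to formality. Since formality is invariant under base change of field, I will first reduce every case to a single universal computation. The surface $X$ is a torsor over $E^\an$ for the Tate curve $T=\bG_m^\an/p^{\bZ}$. There is a Leray (Hochschild--Serre type) description of $\RGamma(X_{\bC_p},\bQ_\ell)$ in terms of $E$, $T$ and the extension class of the torsor.

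Concretely, $\RGamma_\et(T_{\bC_p},\bQ_\ell)$ is formal, with cohomology an exterior algebra on $\hH^1(T)$. Likewise $\RGamma_\et(E_{\bC_p},\bQ_\ell)$ is formal, with $\hH^*(E)$ the cohomology of a $2$-torus. The torsor is classified by a class $c\in \hH^1(E, \ud{T})$; its image in $\hH^2(E_{\bC_p},\hH_1(T))\simeq \hH^2(E)\otimes \hH_1(T)$ is the transgression, i.e. the $d_2$ differential from $\hH^1(T)$ to $\hH^2(E)$ in the Leray spectral sequence for $f\colon X\to E^\an$.

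First I will verify that this transgression is nonzero on the $\bG_m$-direction of $\hH^1(T)$. The key input is that $R^1f_*\bQ_\ell$ is constant, and that the piece of $\hH^1(T)$ coming from $\bG_m$ (i.e. $\hH^1$ of the fiber of $Y\to E$) transgresses to $c_1(L)\ne 0$, since $\deg L=1$. The $p^{\bZ}$-direction extends, because it comes from $\hH^1(p^{\bZ})$, i.e. it is pulled back from $X\to B\bZ$.

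Next I will identify $\RGamma(X_{\bC_p},\bQ_\ell)$, as an $E_\infty$-algebra, with a model of Kodaira-surface type, namely
\[
\RGamma(E)\otimes \Lambda(y)\otimes \Lambda(z),\qquad dz=\omega ,
\]
where $y$ is the $p^{\bZ}$-class, $z$ is the $\bG_m$-class and $\omega\in \hH^2(E)$ is the fundamental class. Since $\RGamma(E)$ is formal, this is (a model for) the tensor product of the Heisenberg nilmanifold model $\Lambda(a,b,z)$, $dz=ab$, with $\Lambda(y)$. To justify the identification, I would express $X$ as a quotient of $Y^\an$ and use descent:
\[
\RGamma(X)\simeq \RGamma(Y^\an)^{h\bZ}.
\]
Then I would compute $\RGamma(Y_{\bC_p})$ for the algebraic variety $Y$, a $\bG_m$-torsor over $E$. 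Its cohomology algebra is the cone of $c_1(L)$ on $\RGamma(E)$; as a cdga this is the model $\RGamma(E)[z]$ with $dz=c_1(L)$, by the Gysin/Leray argument and the formality of $\RGamma(E)$. The $\bZ$-action is trivial on cohomology, since it factors through the connected group $\bG_m$, and is homotopically trivial. Hence the homotopy fixed points tensor this model with $\Lambda(y)$. I also need that the comparison of $Y^\an$ and $Y$ holds for étale cohomology over $\bC_p$, which is the algebraic–analytic comparison (Huber).

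For de Rham cohomology the same argument works, using the de Rham version of the Gysin sequence and $\bZ$-descent. Alternatively, for $\ell=p$, the de Rham comparison gives $\RGamma_\dR(X/K)\otimes B_\dR\simeq \RGamma_\et\otimes B_\dR$ as algebras, and formality is insensitive to field extensions.

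Finally I will show that the resulting algebra is non-formal. With $a,b\in\hH^1(E)$ satisfying $ab=\omega\neq 0$, the triple Massey product $\langle a,a,b\rangle$ (or $\langle a,b,b\rangle$) is defined and equals $\pm az$ modulo the indeterminacy $a\hH^1+\hH^1 b$. In the Heisenberg factor this is the standard nonvanishing computation, and the factor $\Lambda(y)$ does not enlarge the indeterminacy in the relevant degree-2 part. Nonvanishing Massey products obstruct formality.

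The main obstacle is the identification of the $E_\infty$-structure, rather than just the cohomology ring. The cleanest route is to make the $p^{\bZ}$-descent and the $\bG_m$-torsor Gysin model rigorous multiplicatively: the extension $\RGamma(E)\to\RGamma(Y)$ should be the square-zero/Koszul cdga determined by $c_1(L)$, and the homotopy $\bZ$-action should be trivial as an action on $E_\infty$-algebras. The latter follows because the action extends to the connected group $\bG_m^\an$ acting on $Y^\an$, which acts trivially on étale cohomology up to coherent homotopy.
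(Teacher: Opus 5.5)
Your route differs from the paper's, and it can be made to work. As written, however, the decisive step is not proved.

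\textbf{The gap.} You assert that $\RGamma(Y_{\bC_p},\bQ_\ell)$ is, as an $E_\infty$-algebra, $\hH^*(E)\otimes\Lambda(z)$ with $dz=\omega$, ``by the Gysin/Leray argument and the formality of $\RGamma(E)$''. Read as statements about cohomology, Gysin and Leray only determine the graded ring $\hH^*(Y)$. Every graded-commutative ring is also the cohomology of a formal algebra, namely itself. The Massey product $\langle a,a,b\rangle=\pm az$ is exactly the information the ring does not record, so this step, as justified, cannot produce non-formality. You flag it yourself as the main obstacle but only say what the answer ``should be''. Also, the extension is of Koszul type, $\RGamma(E)\otimes_{\bQ_\ell[c]}\bQ_\ell$ with $|c|=2$ and $c\mapsto c_1(L)$, not a square-zero extension.

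\textbf{How the paper avoids this.} It uses Huber's comparison, Artin's comparison and invariance under change of algebraically closed field. These identify $\RGamma_\et(Y^\an_{\bC_p},\bQ_\ell)$ with $\RGamma_\sing(Y^\an_\bC(\bC),\bQ_\ell)$ as $E_\infty$-algebras with $\bZ$-action. Hence $\RGamma_\et(X_{\bC_p},\bQ_\ell)$ is identified with the cochains of the complex primary Kodaira surface. Topology then gives $X_\bC(\bC)\simeq K(\Gamma\times\bZ,1)$, so all multiplicative structure is group cochains, and the classical Massey product in $\hH^*(\Gamma)$ finishes. De Rham is handled as in your alternative, via the $\BB_\dR$-comparison and field extension.

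\textbf{A cheap fix on the \'etale side.} The Gysin triangle comes from applying the lax monoidal functor $\RGamma(E,-)$ to the truncation triangle $\bQ_\ell\to\rR f_*\bQ_\ell\to\bQ_\ell(-1)[-1]$. So it is a triangle of $\RGamma(E)$-modules. With $\RGamma(E)\simeq\Lambda(a,b)$ you get $\RGamma(Y)\simeq\Lambda(a,b)\oplus\Lambda(a,b)z$, $dz=ab$, \emph{as modules}. That already computes $\langle a,a,b\rangle_Y=\pm az$ with zero indeterminacy: use the defining system with $a\cdot a=0$ on the nose in $\Lambda(a,b)$, and note that $a^2=ab=0$ in $\hH^2(Y)$.

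Next, $ab=0$ in $\hH^2(X)$: compare the Leray spectral sequences of $X\to E^\an$ and $Y\to E$ to see that $d_2$ for $X$ is surjective onto $\hH^2(E)$. So $\langle a,a,b\rangle_X$ is defined. Naturality along the $E_\infty$-map $\RGamma(X_{\bC_p})\to\RGamma(Y^\an_{\bC_p})$ then shows $0\notin\langle a,a,b\rangle_X$.

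This makes the $\bZ$-descent model unnecessary. Your homotopy-triviality claim is nonetheless correct: by K\"unneth, the action factors through $\RGamma(\bG_m)\otimes\RGamma(Y)$, and $\mathrm{ev}_1\simeq\mathrm{ev}_p$ as $E_\infty$-maps $\RGamma(\bG_m)\to\bQ_\ell$, because $\RGamma(\bG_m)$ is free on a degree-one class. If you want the full $E_\infty$-model instead, you can build it from the nullhomotopy of $f^*c_1(L)$ given by the tautological trivialization of $f^*L$.

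Either way, your approach avoids the embedding $K\hookrightarrow\bC$ and the topological $K(\pi,1)$ input, at the price of doing this multiplicative bookkeeping by hand.
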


We will reduce the computation of \'etale cohomology of this rigid-analytic surface to the computation of the cohomology of its classical complex-analytic analog.

Choose an embedding $K\hookrightarrow \bC$ and denote by $Y_{\bC}^{\an}$ the complex-analytic space associated to $Y_{\bC}$, and let $X_{\bC}$ be the complex manifold obtained as the quotient of $Y_{\bC}^{\an}$ by the same action of $p^{\bZ}$ as above. This compact complex surface is known as a {\it primary Kodaira surface}, cf. \cite[V.5.BIb)]{barth-peters-ven}.

\begin{lm}\label{padic-complex comparison}
There is an equivalence of $E_{\infty}$-algebras $\RGamma_{\et}(X_{\bC_p},\bQ_{\ell})\simeq \RGamma_{\sing}(X_{\bC}(\bC),\bQ_{\ell})$.
\end{lm}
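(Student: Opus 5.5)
We propose to compare both sides with a third, ``algebraic'' object: the cohomology of the $\bZ$-quotient written as a colimit, or more concretely as homotopy coinvariants. The key observation is that both $X_{\bC_p}$ and $X_{\bC}$ are quotients of analytifications of the same algebraic variety $Y$ by a free, properly discontinuous $\bZ$-action. We will therefore express both cohomologies as $\bZ$-homotopy fixed points of the cohomology of $Y$, and then use algebraic comparison theorems for $Y$.

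\textbf{Step 1: descent along the $\bZ$-covers.} The map $Y^{\an}_{\bC_p}\to X_{\bC_p}$ is a $\bZ$-torsor in the étale (indeed analytic) topology, and $Y_{\bC}^{\an}\to X_{\bC}$ is a topological $\bZ$-covering. By descent for a torsor under a discrete group there are equivalences of $E_\infty$-algebras
\[
\RGamma_{\et}(X_{\bC_p},\bQ_\ell)\simeq \RGamma_{\et}(Y^{\an}_{\bC_p},\bQ_\ell)^{h\bZ},\qquad \RGamma_{\sing}(X_{\bC}(\bC),\bQ_\ell)\simeq \RGamma_{\sing}(Y^{\an}_{\bC}(\bC),\bQ_\ell)^{h\bZ},
\]
where $\bZ$ acts through $p^{\bZ}\subset\bG_m$. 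Homotopy fixed points under $\bZ$ are a finite limit, namely the fiber of $1-\sigma$, so they are computed naturally in $E_\infty$-algebras with $\bZ$-action.

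\textbf{Step 2: analytic versus algebraic, and changing the base field.} We need $\bZ$-equivariant $E_\infty$-equivalences
\[
\RGamma_{\et}(Y^{\an}_{\bC_p},\bQ_\ell)\simeq\RGamma_{\et}(Y_{\bC_p},\bQ_\ell)\simeq \RGamma_{\et}(Y_{\overline K},\bQ_\ell)\simeq\RGamma_{\et}(Y_{\bC},\bQ_\ell)\simeq\RGamma_{\sing}(Y^{\an}_{\bC}(\bC),\bQ_\ell).
\]
The links in this chain are as follows.
\begin{enumerate}
\item Huber's comparison between algebraic and analytic étale cohomology. We apply it first with torsion coefficients, then pass to the limit and invert $\ell$. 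Since $Y$ is not proper, we need the version for finite-type schemes over an algebraically closed nonarchimedean field, where it holds for $\ell\neq p$. For $\ell=p$ the comparison fails in general for nonproper varieties, so this case needs care.
\item Invariance of étale cohomology under extension of algebraically closed fields, applied twice.
\item Artin's comparison theorem.
\end{enumerate}
All of these maps are natural, hence equivariant for the algebraic action of $\bG_m$, and in particular for $p^{\bZ}$. They are also compatible with cup products. Since the $\bZ$-action on the algebraic cohomology is through the connected group $\bG_m$, one may further note that it is homotopically trivial, though we do not need this.

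\textbf{Main obstacle: the case $\ell=p$.} For $\ell=p$ we expect the non-proper analytic comparison to be the hard step; $p$-adic étale cohomology of non-proper rigid spaces such as punctured discs is badly behaved. To avoid it, we would instead replace $Y$ by a $\bZ$-stable quasi-compact fundamental domain. Concretely, we cover $X_{\bC_p}$ by two opens whose preimages are disjoint unions of translates of an annulus bundle $U=\{1\le|z|\le |p|^{-1}\}$ over $E$. The cohomology of $X$ is then computed by a Čech/Mayer--Vietoris diagram built from annulus bundles over $E^{\an}$ and their intersections, which are circle-type bundles. Locally on $E$, annulus bundles have the cohomology of $\bG_m$ with $\bQ_p$-coefficients, via the Künneth formula and the standard computation for closed annuli over $\bC_p$. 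Carrying out the same construction over $\bC$ gives the matching diagram. Identifying the two diagrams functorially, using the comparison for the proper curve $E$ and the Kummer-theoretic description $H^1=\bQ_\ell(-1)$ generated by the coordinate $z$, would yield the equivalence. The delicate part will be making this identification $E_\infty$-multiplicative. To handle this, we would phrase everything as both sides being the pushforward along $X\to E$ of a rank-one local system situation, determined by $\RGamma(E)$ together with the Euler class $c_1(L)$. On both sides the result would then be the same $E_\infty$-algebra, namely the cofiber-type algebra $\RGamma(E)\otimes\Lambda[\theta]$ with $d\theta=c_1(L)$, tensored with the circle factor coming from $\bZ$.
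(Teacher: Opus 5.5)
Your Steps 1 and 2 are exactly the paper's proof. The paper writes both sides as $\RGamma(\bZ,-)$ of the cohomology of $Y^{\an}$. It then identifies the two $\bZ$-equivariant $E_\infty$-algebras through $\RGamma_{\et}(Y_{\oK},\bQ_\ell)$, using Huber's analytic--algebraic comparison, invariance under change of algebraically closed field, and Artin's comparison. For $\ell\neq p$ your argument is complete and coincides with the paper's. The paper does not single out $\ell=p$: it runs the same chain for every $\ell$.

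The genuine gap is in your substitute argument for $\ell=p$. Your claim that closed annulus bundles (locally on $E$) and their ``circle-type'' intersections have the $\bQ_p$-cohomology of $\bG_m$ is false. These are affinoid-type spaces, and their $p$-adic \'etale cohomology is infinite-dimensional. Already for a closed annulus $A$ over $\bC_p$, Kummer theory gives an injection $\cO(A)^\times/(\cO(A)^\times)^{p^n}\hookrightarrow \hH^1_{\et}(A,\mu_{p^n})$. The units $1+h$ with $\|h\|<1$ then contribute an infinite-dimensional subspace of $\hH^1_{\et}(A,\bZ_p(1))[1/p]$, essentially $\cO(A)/\bC_p$ via the logarithm. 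There is also no K\"unneth formula available for such non-proper pieces. So passing to a quasi-compact fundamental domain makes the non-properness problem worse, not better. The terms of your \v{C}ech diagram are not the $p$-adic counterparts of the complex ones; only the totalization could match, and that is what you are trying to prove. In addition, your last step is only asserted. That step claims $\rR f_*\bQ_p$ for $f\colon X\to E^{\an}$ is determined as an $E_\infty$-algebra by $\RGamma(E)$ and $c_1(L)$, giving the model $\RGamma(E)\otimes\Lambda[\theta]$, $d\theta=c_1(L)$, tensored with a circle factor. This is exactly the multiplicative content of the lemma, so it cannot be left as an assertion.

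Your reason for abandoning the chain at $\ell=p$ is also weaker than it looks. Discs, punctured discs and annuli are not analytifications, whereas $Y^{\an}$ is. For instance, on $\bG^{\an}_{m,\bC_p}$ every unit is a monomial $cz^k$ and $\mathrm{Pic}$ is trivial, so $\hH^1_{\et}(\bG^{\an}_{m,\bC_p},\mu_{p^n})\simeq\bZ/p^n$, exactly as algebraically. What is needed at $\ell=p$ is the comparison $\RGamma_{\et}(Y^{\an}_{\bC_p},\bZ/p^n)\simeq\RGamma_{\et}(Y_{\bC_p},\bZ/p^n)$ for this particular $Y$. Here $\RGamma_{\et}(Y^{\an}_{\bC_p},\bQ_p)$ must be read as $\RGamma_{\et}(Y^{\an}_{\bC_p},\widehat{\bZ}_p)[1/p]$, not as pro-\'etale $\bQ_p$-cohomology, which is huge for such spaces. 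This reading is compatible with your Step 1, because homotopy $\bZ$-fixed points are the fiber of $1-\sigma$ and commute with inverting $p$. That comparison is the input the paper's uniform argument relies on, and your proposal neither supplies it nor replaces it.
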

\begin{proof}
As mentioned above, the action of the group $\bZ$ on the algebraic variety $Y$ induces actions on the rigid-analytic space $Y^\an_{\bC_p}$ and the complex analytic space $Y^\an_{\bC}$. The spaces $X_{\bC_p}$ and $X_{\bC}$ are the quotients of $Y^\an_{\bC_p}$ and $Y^\an_{\bC}$ in the respective categories. In particular, we have equivalences of $E_{\infty}$-algebras
\begin{multline}\label{coh as group coh}
\RGamma_{\et}(X_{\bC_p},\bQ_{\ell})\simeq\RGamma\big(\bZ,\RGamma_{\et}(Y^{\an}_{\bC_p},\bQ_{\ell})\big) \\
\RGamma_{\sing}(X_{\bC}(\bC),\bQ_{\ell})\simeq\RGamma\big(\bZ,\RGamma_{\sing}(Y^{\an}_{\bC}(\bC),\bQ_{\ell})\big)
\end{multline}
where the outer $\RGamma$ on the right-hand sides refers to the cohomology of the discrete group $\bZ$.

Combining the comparison between rigid-analytic and algebraic \'etale cohomology (see \cite[Theorem 3.8.1]{Huber-etale}), the Artin comparison between algebraic \'etale and singular cohomology (see \cite[Expos\'e XVI, Th\'eor\`eme 4.1]{SGA4}), and the independence of \'etale cohomology of the algebraically closed ground field (see \cite[Expos\'e XVI, Corollaire 1.6]{SGA4}), we get an equivalence of $E_{\infty}$-algebras in $\cD(\Rep_{\bQ_{\ell}}\bZ)$
\begin{equation*}
\RGamma_{\et}(Y^{\an}_{\bC_p},\bQ_{\ell})\simeq \RGamma_{\et}(Y_{\oK},\bQ_{\ell})\simeq \RGamma_{\sing}(Y^{\an}_{\bC}(\bC),\bQ_{\ell}).
\end{equation*}
Applying the functor $\RGamma(\bZ,-)$ to this equivalence and using the equivalences (\ref{coh as group coh}), we get the result.
\end{proof}

Denote by $\Gamma$ the Heisenberg group \begin{equation*}\Bigg\{\left.\left(\begin{matrix}
1 & a & b \\ 0 & 1 & c \\ 0 & 0 & 1
\end{matrix}\right)\right\vert_{}a,b,c\in\bZ\Bigg\}\subset \rm{GL}_3(\bZ).\end{equation*} It can be characterized as the unique group that can be established as a central extension $1\to \bZ\to \Gamma\xrightarrow{\alpha}\bZ^2\to 1$ of $\bZ^2$ by $\bZ$ such that $\alpha$ induces an isomorphism between the abelianization $\Gamma^{\ab}$ and $\bZ^2$.\footnote{$\hH^2(\bZ^2,\bZ)$ is isomorphic to $\bZ$, under this isomorphism the class of a central extension $0\to\bZ\to G\xrightarrow{\beta}\bZ^2\to 0$ corresponds to the integer $[\tx,\ty]$ for any $\tx,\ty\in G$ lifting the chosen basis elements $x,y\in \bZ^2$. The map $\beta$ induces an isomorphism $G^{\ab}\simeq\bZ^2$ if and only if this integer is $\pm 1$, which is the case for $G=\Gamma$.}

The following description of the topology of Kodaira surfaces is classical, see \cite[(4) on p. 755]{hasegawa} or \cite[p. 374]{borcea}, but let us include a proof for completeness.

\begin{lm}\label{complex kodaira cohomology}
The complex surface $X_{\bC}(\bC)$ is homotopy equivalent to the classifying space of the group $\Gamma\times \bZ$. 
In particular, the $E_{\infty}$-algebra $\RGamma_{\sing}(X_{\bC}(\bC),\bQ_{\ell})$ is equivalent to the group cohomology algebra $\RGamma(\Gamma\times \bZ,\bQ_{\ell})$.
\end{lm}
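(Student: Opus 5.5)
We will show that $X_{\bC}(\bC)$ is a $K(\Gamma\times\bZ,1)$ by exhibiting it as the quotient of a contractible space by a free, properly discontinuous action of $\Gamma\times\bZ$. The starting point is the topology of $Y_{\bC}^{\an}(\bC)$. The map $Y_{\bC}\to E_{\bC}$ is a principal $\bC^\times$-bundle whose associated line bundle $L$ has degree $1$. Write $E_{\bC}(\bC)=\bC/\Lambda$ with $\Lambda\simeq\bZ^2$. The homotopy sequence of the fibration $\bC^\times\to Y\to E$ gives
\[
\pi_2(E)=0\to\pi_1(\bC^\times)=\bZ\to\pi_1(Y)\to\pi_1(E)=\bZ^2\to 1 .
\]
The image of $\pi_1$ of the fiber is central, because the structure group $\bC^\times$ is connected. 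The extension class in $\hH^2(\bZ^2,\bZ)\simeq\hH^2(E,\bZ)$ is the Euler class $\pm c_1(L)$, which is a generator because $\deg L=1$. By the characterization of $\Gamma$ in the footnote, $\pi_1(Y)\simeq\Gamma$. The higher homotopy groups vanish by the long exact sequence: $\pi_i(\bC^\times)=\pi_i(E)=0$ for $i\geq 2$, and $\pi_2(Y)$ is squeezed between $\pi_2(\bC^\times)=0$ and $\pi_2(E)=0$. Hence $Y_{\bC}^{\an}(\bC)$ is a $K(\Gamma,1)$, and its universal cover $\widetilde Y$ is contractible.

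Next we pass to the $p^{\bZ}$-quotient. Because $|p|\neq 1$, the dilation action of $\bZ$ on $Y_{\bC}^{\an}(\bC)$ is free and properly discontinuous, so $X_{\bC}(\bC)$ is again aspherical with universal cover $\widetilde Y$. We get a short exact sequence
\[
1\to\Gamma\to\pi_1(X_{\bC})\to\bZ\to 1 .
\]
It remains to show that this sequence splits as a direct product. Multiplication by $p$ is a map of the $\bC^\times$-bundle, and it is homotopic to the identity through bundle maps, namely multiplication by $e^{s\log p}$ for $s\in[0,1]$, since $\bC^\times$ is connected. Choose a real path $\gamma(s)=p^s$ for $s\in[0,1]$ and a basepoint $y$. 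Then $\pi_1(X)$ is generated by $\Gamma$ together with the loop $\tau$, the image of the path $s\mapsto\gamma(s)\cdot y$. Conjugation by $\tau$ acts on $\pi_1(Y,y)$ via the homotopy $h_s=\gamma(s)\cdot(-)$. Since $\gamma(s)$ commutes with everything in the fiber direction and $h$ is a homotopy from the identity to the deck action, this conjugation is trivial. More concretely, for a loop $\delta$ in $Y$, the square $[0,1]^2\to X$, $(s,t)\mapsto\gamma(s)\delta(t)$, provides a homotopy showing that $\tau$ and $\delta$ commute in $\pi_1(X)$. So $\pi_1(X)=\Gamma\times\bZ$.

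Because $X_{\bC}(\bC)$ is aspherical with fundamental group $\Gamma\times\bZ$, it is homotopy equivalent to $B(\Gamma\times\bZ)$. The second claim then follows: the singular cochain $E_\infty$-algebra of $B G$ is naturally equivalent to the group cohomology algebra $\RGamma(G,\bQ_\ell)$. Both are $\RGamma$ of the constant sheaf on the classifying space, or equivalently cochains on the simplicial set $BG$.

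I expect the main obstacle to be the splitting step. The commutation of $\tau$ with $\Gamma$ needs care with basepoints, in particular because $\gamma(1)y=p\,y\neq y$ and must be identified with $y$ in $X$. The identification of the extension class with $c_1(L)$ up to sign is standard and causes no difficulty.
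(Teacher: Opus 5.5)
Your proposal is correct and follows essentially the same route as the paper: the $\bC^\times$-fibration shows $Y$ is aspherical with $\pi_1$ a central extension of $\bZ^2$ by $\bZ$ that is pinned down as $\Gamma$ by $\deg L=1$, and then the free $p^{\bZ}$-quotient splits as a product because multiplication by $p$ is homotopic to the identity. The differences are only in how the substeps are verified. You identify the extension class directly with the Euler class $\pm c_1(L)$, whereas the paper checks the abelianization criterion via the Leray spectral sequence, whose $d_2$ is $c_1(L)$; and your explicit square $(s,t)\mapsto p^s\delta(t)$ makes the paper's ``section plus trivial outer action'' step concrete by producing a central lift $\tau$ of the generator directly.
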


In the proof below, we will ignore the choice of a base point in the definition of homotopy groups $\pi_i$. Since all spaces considered are connected, this should not cause any confusion. 

\begin{proof}
We will prove that $X_{\bC}(\bC)$ is a $K(\pi,1)$-space whose fundamental group is isomorphic to $\bZ\times\Gamma$, which is equivalent to the assertion of the lemma.

{\it Step~$1$. $Y^{\an}_\bC(\bC)$ is homotopy equivalent to $K(\Gamma, 1)$.} First, we note that the natural map $f\colon Y^{\an}_\bC(\bC)\to E^\an_{\bC}(\bC)$ is a fibration with fibers isomorphic to $\bC^{\times}$. Since both $E^\an_{\bC}(\bC)$ and $\bC^\times$ are $K(\pi, 1)$-spaces, we conclude that $Y^{\an}_{\bC}(\bC)$ is also a $K(\pi, 1)$-space. So we only need to show that $\pi_1(Y^\an_{\bC}(\bC))$ is isomorphic to the Heisenberg group $\Gamma$. 

For this, we consider the homotopy exact sequence for the fibration mentioned above (which is exact on the left because $\pi_2\big(E^\an_\bC(\bC)\big)=0$):
\begin{equation*}
1\to \bZ\to \pi_1\big(Y^\an_\bC(\bC)\big)\to \pi_1\big(E^\an_\bC(\bC)\big)\to 0.
\end{equation*}
We claim that this extension is central. To see this, it suffices to show that the action of $\pi_1\big(E^\an_\bC(\bC)\big)$ on $\bZ \simeq \pi_1(\bC^\times) \simeq \hH_1(\bC^\times, \bZ) \simeq \hH^1_c(\bC^\times, \bZ)$ is trivial. This follows from the fact that the local system $\rR^1 f_!\, \bZ$ is isomorphic to the constant local system $\bZ$.\footnote{To see this claim, note that $f$ admits a compactification by a $\bP^1$-bundle $\overline {f} \colon \bP^\an_E(L\oplus \cO)(\bC) \to E_\bC^\an(\bC)$. Using this, one sees that $\rR^1 f_!\, \bZ \simeq \coker(\overline{f}_*\bZ = \bZ \to \bZ^2) \simeq \bZ$.}

Therefore, we conclude that $\pi_1\big(Y^\an_\bC(\bC)\big)$ fits into a central extension $1\to \bZ \to \pi_1\big(Y^\an_\bC(\bC)\big) \to \bZ^2 \to 1$. It remains only to show that the induced map 
\[
\pi^{\ab}_1\big(Y^\an_\bC(\bC)\big) \to \pi^{\ab}_1\big(E^\an_\bC(\bC)\big)
\]
is an isomorphism. The Hurewicz theorem implies that this is equivalent to showing that the map $\hH_1\big(Y^{\an}_\bC(\bC), \bZ\big) \to \hH_1\big(E^{\an}_\bC(\bC), \bZ\big)$ is an isomorphism. Using the universal coefficient theorem, we see that it suffices to show that the natural map $\hH^1\big(E^\an_\bC(\bC), \bZ\big) \to \hH^1\big(Y^\an_\bC(\bC), \bZ)$ is an isomorphism and both $\hH^2\big(E^\an_\bC(\bC), \bZ\big)$ and $\hH^2\big(Y^\an_\bC(\bC), \bZ)$ are torsion-free. Clearly, $\hH^2\big(E^\an_\bC(\bC), \bZ\big) \simeq \bZ$ is torsion-free. 

For the other claims, we consider the Leray spectral sequence
\[
\mathrm{E}^{p,q}_2 = \hH^p\big(E^{\an}_\bC(\bC), \rR^q f_*\bZ\big) \Rightarrow \hH^{p+q}_{\sing}(Y_\bC^{\an}(\bC), \bZ).
\]
Now we use that $f_*\bZ \simeq \bZ$, $\rR^1f_*\bZ \simeq \bZ$, and $\rR^if_* \bZ =0$ for $i\geq 2$.\footnote{To see these isomorphisms, we also use the $\bP^1$-bundle compactification $\overline{f}$ of $f$ and the associated distinguished triangle $\bZ^{\oplus 2}[-2] \to \rR \overline{f}_* \bZ \to \rR f_* \bZ$.} Hence, the only potentially non-trivial differential $\hH^0\big(E^{\an}_{\bC}(\bC), \rR^1f_*\bZ)\to \hH^2\big(E^{\an}_{\bC}(\bC), f_*\bZ)$ is given, up to a sign, by sending $1$ to the first Chern class $c_1(L)$ of the line bundle $L$ used to define $Y$. Since $\deg L = 1$, we conclude that this map is an isomorphism. This formally implies that $\hH^1\big(E^\an_\bC(\bC), \bZ\big) \to \hH^1\big(Y^\an_\bC(\bC), \bZ)$ is an isomorphism and that 
\[
\hH^2(Y^\an_\bC(\bC), \bZ) \simeq \hH^1(E^{\an}_{\bC}(\bC), \rR^1f_*\bZ) \simeq \hH^1(E^{\an}_{\bC}(\bC), \bZ) \simeq \bZ^2.
\]
This finishes the proof that $Y^{\an}_{\bC}(\bC)$ is homotopy equivalent to $K(\Gamma, 1)$.

{\it Step~$2$. $X_\bC(\bC)$ is homotopy equivalent to $K(\Gamma\times \bZ, 1)$.} Since $X_{\bC}(\bC)$ is the quotient of $Y^{\an}_{\bC}(\bC)$ by a free action of $\bZ$, we conclude that $X_{\bC}(\bC)$ is a $K(\pi, 1)$-space and its fundamental group fits into the following short exact sequence:
\begin{equation*}
1\to \pi_1\big(Y^{\an}_{\bC}(\bC)\big)\to \pi_1\big(X_{\bC}(\bC)\big)\xrightarrow{\pi} \bZ\to 1.
\end{equation*}
This defines an outer action $\rho\colon \bZ \to \mathrm{Out}\big(\pi_1\big(Y_{\bC}^{\an}(\bC)\big)\big)$. Since $\pi$ obviously admits a group-theoretic section, it suffices to show that $\rho$ is trivial to conclude that $\pi_1\big(X_{\bC}(\bC)\big) \simeq \pi_1\big(Y^{\an}_{\bC}(\bC)\big) \times \bZ \simeq \Gamma\times \bZ$. This action is induced by the action of $\bZ$ on $Y_{\bC}^{\rm{an}}(\bC)$ such that $1$ acts by fiberwise multiplication by $p$. This automorphism is homotopic to the identity automorphism, therefore the induced outer action on $\pi_1\big(Y^{\an}_{\bC}(\bC)\big)$ is trivial. This finishes the proof. 
\end{proof}

\begin{proof}[Proof of Proposition \ref{main}]
Combining Lemmas \ref{padic-complex comparison} and \ref{complex kodaira cohomology}, we get that the \'etale cohomology $\RGamma_{\et}(X_{\bC_p},\bQ_{\ell})$ of our rigid-analytic surface is equivalent, even as an $E_{\infty}$-algebra, to the group cohomology $\RGamma(\Gamma\times\bZ,\bQ_{\ell})$. But the latter is non-formal even as an $E_1$-algebra by the classical observation that there exists a non-trivial Massey product in the cohomology of the Heisenberg group $\Gamma$, cf. \cite[p. 261]{deligne-griffiths-morgan-sullivan}. Non-formality of de Rham cohomology now follows by Lemma \ref{dr vs padic etale} below.
\end{proof}

\begin{rem}
One can also compute the cohomology of $X_{\bC_p}$ via purely $p$-adic methods. Assume for simplicity that $E$ has split multiplicative reduction, so that its analytification is a Tate curve $\bG_{m,K}^{\an}/q^{\bZ}$ for some $q\in\fm_K\subset \cO_K$. Then $X_{\bC_p}$ can be described as a quotient of $(\bG_{m,{\bC_p}}^{\an})^2$ by the group $\bZ^2$, this allows one to compute its cohomology via a Leray spectral sequence, see \cite[3.4]{wilke}.
\end{rem}

\comment{As above, $\bC_p$ is the completed algebraic closure of $\bQ_p$ and $\cO_{\bC_p}\subset\bC_p$ is the subring of elements of non-negative valuation.

\begin{lm}\label{good reduction}
Let $\fX$ be a smooth proper formal scheme over $\cO_{\bC_p}$. Then the \'etale cohomology $E_{\infty}$-algebra $\RGamma_{\et}(X,\bQ_{\ell})$ of the generic fiber $X\coloneqq \fX_{\eta}$ is formal for all $\ell$.
\end{lm}

\begin{proof}
For $\ell\neq p$ the specialization map induces an equivalence $\RGamma_{\et}(X,\bQ_{\ell})\simeq\RGamma_{\et}(\fX_{k},\bQ_{\ell})$ of $E_{\infty}$-algebras (see \cite[Corollary 5.4]{Berkovich-cycles}), where $\fX_{k}$ is the geometric special fiber of $\fX$, which is a smooth proper variety over the residue field $k=\overline{\bF}_p$. The algebra $\RGamma_{\et}(\fX_{k},\bQ_{\ell})$ is formal as a consequence of Weil conjectures, by \cite[Corollaire 5.3.7]{weil2}. See also \cite[Corollary 4.17]{bhatt-schnell-scholze} for a modern exposition of this fact.

For $\ell=p$ we will analogously deduce the formality of $\RGamma_{\et}(X,\bQ_p)$ from that of the crystalline cohomology of $\fX_k$. By \cite[Theorem 1.1(i)]{bms1} there is an equivalence of $E_{\infty}$-algebras over $\BB_{\cris}$:\footnote{This equivalence is stated in loc. cit. on the level of individual cohomology groups but one readily sees that it is induced by a map of $E_{\infty}$-algebras -- it is the composition of the crystalline comparison in Theorem 1.8 (iii) and the equivalence from Proposition 13.21 induced by a power of Frobenius endomorphism.}
\begin{equation}
\RGamma_{\et}(X,\bQ_p)\otimes_{\bQ_p}\BB_{\cris}\simeq\RGamma_{\cris}(\fX_k/W(k))\otimes_{W(k)}\BB_{\cris}.
\end{equation}
Since formality of an $E_{\infty}$-algebra over a characteristic zero field can be checked after base change to an arbitrary non-zero ring, it suffices to show that the $E_{\infty}$-algebra $\RGamma_{\cris}(\fX_k/W(k))[\ip]$ over $W(k)[\ip]$ is formal. This was shown in \cite[Theorem 4.25]{olsson}, we briefly recall the argument here. 

Smooth proper $k$-scheme $\fX_k$ descends to a smooth proper scheme $\fX_{0}$ over a finite subfield $\bF_{p^r}= k_0\subset k\simeq\overline{\bF}_p$. The $E_{\infty}$-algebra $\RGamma_{\cris}(\fX_{0}/W(k_0))[\ip]$ is equipped with a $W(k_0)[\ip]$-linear automorphism $F_{\fX_{0}}^{r*}$ induced by the $p^r$-th power Frobenius endomorphism. The eigenvalues of this automorphism on $\hH^i_{\cris}(\fX_0/W(k_0))[\ip]$ are Weil numbers of weight $i$, hence the $E_{\infty}$-algebra $\RGamma_{\cris}(\fX_{0}/W(k_0))[\ip]$ is formal by \cite[Remark 4.18]{bhatt-schnell-scholze}.\footnote{The setup of \cite[4.2]{bhatt-schnell-scholze} requires the coefficients of the algebra to be $\obQ_{\ell}$ for $\ell\neq p$ but their discussion is purely algebraic and goes through for the isomorphic field of coefficients $\obQ_p$.}

Therefore, $\RGamma_{\cris}(\fX_{k}/W(k))[\ip]\simeq \RGamma_{\cris}(\fX_{0}/W(k_0))[\ip]\otimes_{W(k_0)[\ip]}W(k)[\ip]$ is also formal, and consequently $\RGamma_{\et}(X,\bQ_p)$ is formal.
\end{proof} }

\section{Derived categories of Galois representations}

\subsection{Representations}

Throughout this subsection, we fix a finite extension $K$ of $\bQ_p$ with absolute Galois group $G_K$ and a prime number $\ell$ (which could be equal to $p$). The main goal of this subsection is to define the correct derived category of continuous $\QQ_\ell$-linear representations of $G_K$. 

For this, we will use the pro\'etale site $BG_{K, \proet}$ as defined in \cite[\textsection 4.3]{proetale}. Its underlying category is the category\footnote{This equivalence follows from \cite[Lemma 5.6.4]{profinitegroups}.} $G_K\text{--pfsets} \simeq \Pro(G_K\text{--fsets})$ of profinite sets with a continuous $G_K$-action with covers given by continuous surjections. 

Any topological space $X$ with a continuous $G_K$-action gives rise to a pro\'etale sheaf $\ud{X}\in \Shv(BG_{K, \proet}; \Set)$ defined via the formula $\ud{X}(S) = \Map_{G_K, \cont}(S, X)$, see \cite[Lemma 4.3.2]{proetale}. In particular, the topological ring $\bQ_{\ell}$ equipped with a trivial action of $G_K$ defines a sheaf of commutative algebras on $BG_{K,\proet}$ that we denote by $\bQ_\ell$ as well. From now on, we denote the abelian category of sheaves of $\bQ_\ell$-modules on $BG_{K,\proet}$ by $\Mod(G_K; \bQ_\ell)$ and its derived $\infty$-category by $\cD(G_K; \bQ_\ell)$.

\begin{construction}\label{construction:condensed-resp} There is a fully faithful, exact, symmetric monoidal functor 
\[
\Rep_{\QQ_\ell} G_K \to \Mod(G_K; \QQ_\ell)
\]
which sends a continuous finite-dimensional $\QQ_\ell$-linear representation $V$ to the associated sheaf $\ud{V}$ on $BG_{K, \proet}$. In what follows, we will {often simply write $V$ in place of $\ud{V}$.} 

Since this functor is exact, it induces a symmetric monoidal functor $\cD(\Rep_{\QQ_\ell} G_K) \to \cD(G_K; \QQ_{\ell})$.
\end{construction}
We will now prove that this functor is fully faithful, using the following lemma. Let $\varphi\in G_K$ be any lift of the arithmetic Frobenius element along the map $G_K\twoheadrightarrow G_K/I_K\simeq\widehat{\bZ}$. 

\begin{lemma}\label{lemma:killing-homs-into-E(1)-galois} Let $V, U\in \Rep_{\QQ_\ell} G_K$ and let $S=\QQ_\ell\cdot e_1 \oplus \QQ_\ell\cdot e_2$ be the two-dimensional continuous $\QQ_\ell$-linear $G_K$-representation such that the inertia $I_K$ acts trivially on $S$, and $\varphi$ acts via the $\QQ_\ell$-linear transformation such that $\varphi(e_1)=e_1$ and $\varphi(e_2)=e_1+e_2$. Let $V \hookrightarrow V\otimes_{\QQ_\ell} \Sym^r S$ be a homomorphism of $G_K$-representations defined via the formula $v\mapsto v\otimes e_1^r$. Then the induced morphism
\[
\Hom_{G_K}\bigl(V\otimes_{\QQ_\ell} \Sym^r S, U\bigr) \to \Hom_{G_K}\bigl(V, U\bigr)
\]
is zero for any $r\geq \dim_{\QQ_\ell} V \cdot \dim_{\QQ_\ell} U$.
\end{lemma}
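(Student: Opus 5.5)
Here is the plan. Use the adjunction $\Hom_{G_K}(V\otimes W, U)\simeq \Hom_{G_K}(W, V^\vee\otimes U)$ with $W=\Sym^r S$ and $W=\QQ_\ell$ (the trivial representation) to reduce to the case $V=\QQ_\ell$, with $U$ replaced by $U'\coloneqq V^\vee\otimes U$. Note that $\dim U' = \dim V\cdot\dim U\eqqcolon n$. Under this reduction the map in question becomes restriction along $\QQ_\ell\to\Sym^r S$, $1\mapsto e_1^r$. So it suffices to show the following: if $f\colon \Sym^r S\to U'$ is $G_K$-equivariant and $r\ge n$, then $f(e_1^r)=0$.

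First, describe $\Sym^r S$. Inertia acts trivially on it, and $\varphi$ acts unipotently. With respect to the basis $e_1^{r-k}e_2^{k}$, $k=0,\dots,r$, the element $N\coloneqq\varphi-1$ acts by
\[
N(e_1^{r-k}e_2^k)=\sum_{j<k}\binom{k}{j} e_1^{r-j}e_2^{j},
\]
which is strictly triangular with nonzero subdiagonal coefficients $k$ (characteristic $0$). Hence $\Sym^r S$ is a single Jordan block for $\varphi$ with eigenvalue $1$. In particular, $e_1^r$ spans $\ker N$, and $e_1^r\in N^{r}(\Sym^r S)$; indeed $N^r(e_2^r)=r!\,e_1^r$.

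Second, use equivariance. Since $f$ commutes with $\varphi$, it commutes with $N=\varphi-1$, so
\[
f(e_1^r)=\tfrac{1}{r!}\,N^r f(e_2^r).
\]
The image $f(\Sym^r S)\subset U'$ is a $\varphi$-stable subspace on which $\varphi$ is unipotent, being a quotient of a unipotent module. Its dimension is at most $n$, so $N^n$ kills it. As $r\ge n$, we get $N^r f(e_2^r)=0$, and therefore $f(e_1^r)=0$.

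The only mildly delicate points are the Jordan-block computation, which needs characteristic $0$ so that the factor $r!$ is invertible, and checking that the adjunction identification really carries the given map to restriction along $e_1^r$. Both are routine. Continuity plays no role, since everything is finite-dimensional and only the action of $\varphi$ is used.
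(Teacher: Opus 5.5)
Your proof is correct and follows the paper's argument: you make the same adjunction reduction to $V=\QQ_\ell$ with target $V^\vee\otimes U$, and you use the same identity $(\varphi-1)^r(e_2^r)=r!\,e_1^r$. The only cosmetic difference is in the final linear-algebra step: the paper notes that $T(e_1^r)\in \mathrm{Im}\bigl((\varphi_W-1)^r\bigr)\cap\ker(\varphi_W-1)$ and that this intersection vanishes, whereas you observe directly that $\varphi-1$ is nilpotent on the image of $f$, which has dimension at most $n\le r$.
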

\begin{proof}
    Note that $\Hom_{G_K}\bigl(V\otimes_{\QQ_\ell} \Sym^r S, U \bigr) \simeq \Hom_{G_K}(\Sym^r S, V^\vee\otimes_{\QQ_\ell} U)$ and $\Hom_{G_K}\bigl(V, U\bigr) \simeq \Hom_{G_K}(\QQ_\ell, V^\vee \otimes_{\QQ_\ell} U)$. Therefore, it suffices to show that the inclusion $\QQ_\ell \to \Sym^r S$, defined via the formula $x\mapsto xe_1^r$, induces the zero morphism $\Hom_{G_K}(\Sym^r S, W) \to \Hom_{G_K}(\QQ_\ell, W)$ for any $W\in \Rep_{\QQ_\ell} G_K$ such that $\dim_{\QQ_\ell} W\leq r$. 

    In other words, we need to show that for any homomorphism $T\colon \Sym^r S\to W$ for $W$ as above, we have $T(e_1^r)=0$. For this, we note that $(\varphi-1)(e_1^r)=0$ and $(\varphi-1)^r(e_2^r)=r!\cdot e_1^r$ in $\Sym^r S$. Therefore, we conclude that for any homomorphism $T\colon \Sym^r S \to W$, we have $T(e_1^r) \in \mathrm{Im}\bigl((\varphi_W-1)^r\bigr)\cap \mathrm{Ker}\bigl(\varphi_W-1\bigr)$. Now note that this intersection is trivial if $r\geq \dim_{\QQ_\ell} W$. This finishes the proof. 
\end{proof}

\begin{lemma}\label{lem: repgk derived of its heart} Let $K$ be a finite extension of $\QQ_p$. Then the functor $\cD^b(\Rep_{\QQ_\ell} G_K) \to \cD(G_K; \QQ_\ell)$ is fully faithful. Its essential image is characterized as the subcategory of objects $M\in \cD(G_K;\bQ_{\ell})$ such that each $\hH^i(M)\in \Mod(G_K;\bQ_{\ell})$ arises from an object of $\Rep_{\bQ_{\ell}}G_K$ and is zero for almost all $i\in\bZ$.
\end{lemma}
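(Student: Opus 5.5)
We plan to use the standard criterion for full faithfulness of a functor $\cD^b(\cA)\to\cD(\cB)$ induced by an exact, fully faithful functor $\cA\to\cB$ of abelian categories. By dévissage on the length of bounded complexes (five lemma on the long exact sequences of $\Hom$'s), full faithfulness reduces to showing that the maps
\[
\Ext^i_{\Rep_{\QQ_\ell}G_K}(V,U)\to \Ext^i_{\cD(G_K;\QQ_\ell)}(\ud V,\ud U)
\]
are isomorphisms for all $V,U\in\Rep_{\QQ_\ell}G_K$ and all $i\geq 0$. Here the left side is the Yoneda Ext in the abelian category.

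For this we use the effaceability criterion of Grothendieck/Beilinson. It suffices that $\Rep_{\QQ_\ell}G_K\to\Mod(G_K;\QQ_\ell)$ is fully faithful with image closed under extensions, which yields bijectivity for $i\le 1$. We also need that for every $i\geq 1$ and every class $c\in\Ext^i_{\cD(G_K;\QQ_\ell)}(\ud V,\ud U)$ there is a monomorphism $U\hookrightarrow U'$ in $\Rep_{\QQ_\ell}G_K$ (or dually an epimorphism $V'\twoheadrightarrow V$) killing $c$. Given these, an induction on $i$ shows that the comparison maps are isomorphisms in all degrees.

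Closure under extensions holds because an extension of sheaves $\ud U$ by $\ud V$ is represented on $*$ by a $\QQ_\ell$-vector space. Continuity of the resulting action follows since the sheaf is locally split on profinite covers, so the underlying extension of condensed vector spaces splits topologically.

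For effaceability, we first compute the target. By \cite[\textsection 4.3]{proetale}, $\Ext^i_{\cD(G_K;\QQ_\ell)}(\ud V,\ud U)=\hH^i_{\cont}(G_K,V^\vee\otimes U)$ is continuous cohomology, so we may take $V=\QQ_\ell$ and $W=V^\vee\otimes U$. Now Tate local duality and finiteness give $\hH^i_{\cont}(G_K,W)=0$ for $i\geq 3$. So only $i=2$ needs effacing; degrees $\geq3$ are automatic once degree $2$ is handled.

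For $i=2$, Tate duality identifies $\hH^2(G_K,W)\simeq \hH^0(G_K,W^\vee(1))^\vee=\Hom_{G_K}(W,\QQ_\ell(1))^\vee$. Lemma~\ref{lemma:killing-homs-into-E(1)-galois} is tailored to this: take $r\geq\dim W$ and the inclusion $W\hookrightarrow W\otimes\Sym^r S$, $w\mapsto w\otimes e_1^r$. Functoriality of Tate duality turns the induced map
\[
\hH^2(G_K,W)\to\hH^2(G_K,W\otimes\Sym^rS)
\]
into the dual of the restriction $\Hom_{G_K}(W\otimes\Sym^rS,\QQ_\ell(1))\to\Hom_{G_K}(W,\QQ_\ell(1))$, which the lemma says is zero. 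So every degree-$2$ class is effaced by a monomorphism in $\Rep_{\QQ_\ell}G_K$. The same construction works for $\ell=p$, since Tate duality holds for $\QQ_p$-coefficients as well.

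The essential image statement then follows formally. Every bounded $M$ whose cohomology sheaves come from representations is built from finitely many shifted $\ud{\hH^i(M)}$ by iterated fibers. Since the image is fully faithful and triangulated (it is closed under cones), $M$ lies in it. The converse is clear because the functor is exact.

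We expect the main obstacle to be the identification $\RHom(\ud V,\ud U)\simeq \RGamma_{\cont}(G_K,V^\vee\otimes U)$ together with the vanishing in degrees $\geq3$. This step is where condensed/proétale subtleties enter, for instance $\QQ_\ell$ being a colimit of $\ZZ_\ell$. We would first prove it for $\ZZ_\ell$-lattices with torsion coefficients, then pass to $\RHom$ via the Mittag-Leffler property from \cite{proetale}, and finally invert $\ell$.
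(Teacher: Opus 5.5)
Your proposal is correct and follows essentially the same route as the paper. Both arguments reduce to showing that $\Ext^i_{\Rep_{\QQ_\ell}G_K}(\QQ_\ell,W)\to \hH^i_{\cont}(G_K,W)$ is an isomorphism via \cite[Lemma 4.3.9]{proetale}, apply Buchsbaum's effaceability criterion with cohomological dimension $2$, and efface degree-$2$ classes using local Tate duality and $W\hookrightarrow W\otimes_{\QQ_\ell}\Sym^{\dim W}S$ from Lemma~\ref{lemma:killing-homs-into-E(1)-galois}. The only differences are that the paper handles degree $1$ by citing \cite[Proposition 1.2.6]{Emerton-Kisin} rather than your sheaf-level closure-under-extensions sketch, and it simply cites \cite{proetale} for the continuous-cohomology identification that you flag as the main obstacle.
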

This was proven for $\ell=p$ in \cite[Proposition 1.3.2]{Emerton-Kisin}. We give here a uniform argument for all $\ell$.
\begin{proof}
    An inductive argument reduces the question to showing that the natural morphism $\Ext^i_{\Rep_{\QQ_{\ell}} G_K}(V, W) \to \Ext^i_{\Shv(BG_{K, \proet};\bQ_{\ell})}(V, W)$ is an isomorphism for any $V, W\in \Rep_{\QQ_{\ell}} G_K$ and $i\geq 0$. Furthermore, we can replace $V$ with $\QQ_\ell$ and $W$ with $V^{\vee}\otimes W$ and use \cite[Lemma 4.3.9]{proetale} to reduce the question to showing that $\Ext^i_{\Rep_{\QQ_{\ell}} G_K}(\QQ_\ell, W) \to \hH^i_{\cont}(G_K, W)$ is an isomorphism. 
    
    Since this map is an isomorphism for $i=0$, \cite[Proposition~4.2]{Buchsbaum} implies that it suffices to check that for any $V\in \Rep_{\QQ_\ell} G_K$ and $u\in \hH^i_{\cont}(G_K, V)$, there is an injection $V \hookrightarrow V'$ in $\Rep_{\QQ_\ell}G_K$ such that the image of $u$ vanishes in $\hH^i(G_K, V')$. Since the $\ell$-adic cohomological dimension of $G_K$ is $2$, we only need to deal with the cases $i=1$ and $i=2$.
    
    For $i=1$, \cite[Proposition 1.2.6]{Emerton-Kisin} implies that $\Ext^1(\QQ_\ell, V) \to \hH^1_{\cont}(G_K, V)$ is an isomorphism. Thus, the class $u\in \hH^1_{\cont}(G_K, V)$ is represented by an extension $0 \to V \to W \to \QQ_\ell \to 0$. Then the injection $V \hookrightarrow W$ annihilates the class $u\in \hH^1_{\cont}(G_K, V)$.
    
    For $i=2$, local Tate duality implies that $\hH^2_{\cont}(G_K, V)^{\vee} \simeq \hH^0_{\cont}(G_K, V^\vee(1)) \simeq \Hom_{G_K}(\QQ_\ell, V^\vee(1)) \simeq \Hom_{G_K}(V, \QQ_\ell(1))$. Therefore, it suffices to show that, for any $V\in \Rep_{\QQ_\ell} G_K$, we can find an injection $V \hookrightarrow V'$ such that the natural morphism $\Hom_{G_K}\bigl(V', \QQ_\ell(1)\bigr) \to \Hom_{G_K}\bigl(V, \QQ_\ell(1)\bigr)$ is zero. This is achieved by taking $V'\coloneqq  V\otimes_{\QQ_\ell} \Sym^{\dim_{\QQ_\ell} V} S$ from Lemma~\ref{lemma:killing-homs-into-E(1)-galois}.
\end{proof}

\begin{rem}
\begin{enumerate}
    \item 
Note that the analog of Lemma \ref{lem: repgk derived of its heart} is false for an arbitrary finitely generated profinite group $\Gamma$ in place of $G_K$. The category of continuous representations of the group $\Gamma=\SL_2(\bZ_p)$ on finite-dimensional $\bQ_p$-vector spaces is semi-simple, but $\hH^3_{\cont}(\SL_2(\bZ_p),\bQ_p)\simeq \bQ_p$ is non-zero by Lazard's theorem \cite[Th\'eor\`eme V.2.4.10]{lazard}.
\item {We do not know for which other classes of fields Lemma \ref{lem: repgk derived of its heart} holds. Federico Scavia explained to us that it fails, for example, for any field $K$ of characteristic $\neq p$ whose absolute Galois group is isomorphic to $\prod\limits_{n=1}^{\infty}\bZ_p$, and such fields exist.} However, a consequence of Beilinson's basic lemma \cite[Remark 21.5]{bhatt-notes} is that for any finite type scheme $X$ over a field $F$ of characteristic {coprime to} $\ell$ there is a natural object of $\cD^b(\Rep_{\bQ_{\ell}}G_F)$ lifting $\RGamma_{\et}(X_{\oF},\bQ_{\ell})\in \cD(G_F;\bQ_{\ell})$. It seems natural to expect that the same applies to $\ell$-adic cohomology of any smooth proper rigid-analytic space $X$ over a non-archimedean field $K$ of characteristic coprime to $\ell$.
\end{enumerate}
\end{rem}

\subsection{Derived category of de Rham representations.} 

Recall that a finite-dimensional $\bQ_p$-representation $V$ is called {\it de Rham} if there exists a $K$-vector space $D$ together with a $G_K$-equivariant isomorphism
\begin{equation}\label{bdrpuls comparison iso filtered}
V\otimes_{\bQ_p}\BB_{\dR}\simeq D\otimes_K \BB_{\dR}
\end{equation}
of $\BB_{\dR}$-vector spaces, {compatible with the filtrations}. Alternatively, we may rephrase this condition by saying that there exists a filtered $K$-vector space $D$ together with an isomorphism
\begin{equation}\label{bdrplus comparison iso}
V\otimes_{\bQ_p}\BB_{\dR}^+\simeq \Fil^0(D\otimes_K \BB_{\dR})
\end{equation}
where $\Fil^0$ on the right-hand side is taken with respect to the tensor product of the given filtration on $D$ with the filtration $\Fil^n\BB_{\dR}\coloneqq t^n\BB_{\dR}^+$ on $\BB_{\dR}$. The filtered vector space $D$ is in fact naturally recovered from the de Rham representation $V$. 

In this subsection, we prove that the verbatim derived analog of the condition (\ref{bdrplus comparison iso}) recovers the correct derived category of de Rham representations, see Theorem~\ref{thm: de rham derived category pullback}. 

\begin{notation}\label{notation:B+dr-reps}
Topological $\bQ_p$-algebras $\BB_{\dR}^+$ and $\BB_{\dR}$ equipped with a continuous action of $G_K$ give rise to commutative algebras in the category $\Mod(G_K;\bQ_p)$, and we denote by $\cD(G_K;\BB_{\dR}^+)$ the symmetric monoidal $\infty$-category of modules over the corresponding commutative algebra object $\BB_{\dR}^+\in \cD(G_K;\bQ_p)$. We have a functor $V\mapsto V\otimes_{\bQ_p}\BB_{\dR}^+ \colon \cD(G_K;\bQ_p)\to \cD(G_K;\BB_{\dR}^+)$.
\end{notation}

For an $\infty$-category $\cC$, we denote by
\[
\cC_{\fil}\coloneqq \Fun\bigl((\ZZ, \leq)^{\op},\cC\bigr)
\]
the corresponding filtered $\infty$-category, where our convention is to use decreasing filtrations. If $\cC$ is equipped with a symmetric monoidal structure, $\cC_{\fil}$ can be endowed with the Day convolution symmetric monoidal structure (see \cite[Construction 2.2.6.7]{lurie-ha}). Explicitly, we have $\Fil^n(X\otimes Y)=\colim\limits_{i+j\geq n}\Fil^iX\otimes\Fil^j Y$ for two objects $X,Y\in \cC_{\fil}$. The assignment $\Fil^\bullet C \mapsto \Fil^0 C$ can be promoted to a lax symmetric monoidal functor $\Fil^0\colon \cC_\fil \to \cC$, see \cite[Corollary 3.8]{Ben-Moshe-Schlank}. 

\begin{notation}\label{notation:trivial-filtration} For $C\in \cC$, we define \emph{$C$ with the trivial filtration} to be the filtered object 
\[
\Fil^n C = \begin{cases} C & \text{ if } n\leq 0 \\
0 & \text{ if } n> 0.
\end{cases}
\]
By a slight abuse of notation, we often denote this filtered object simply by $C$. This can be promoted to a symmetric monoidal functor $\mathrm{triv}\colon \cC \to \cC_{\fil}$ which is a left adjoint to $\Fil^0$ (see \cite[Corollary 3.6]{Ben-Moshe-Schlank}).
\end{notation}

For brevity, we denote by $\cD_{\fil}(K)\coloneqq \cD(\Vect_K)_{\fil}$ the filtered derived $\infty$-category of $K$-vector spaces.

\begin{lemma}\label{rhom in dfil} For an object $D\in \cD_{\fil}(K)$ there is a natural equivalence $\RHom_{\cD_{\fil}(K)}(K,D)\simeq \Fil^0 D$, where $K\in\cD_{\fil}(K)$ is equipped with the trivial filtration from Notation~\ref{notation:trivial-filtration}.
\end{lemma}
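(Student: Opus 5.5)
The plan is to deduce the statement from the adjunction $\mathrm{triv}\dashv \Fil^0$ of Notation~\ref{notation:trivial-filtration}, applied to $\cC=\cD(\Vect_K)$. For any $C\in\cD(\Vect_K)$ this adjunction gives a natural equivalence of mapping spaces
\[
\Map_{\cD_{\fil}(K)}\bigl(\mathrm{triv}(C),D\bigr)\simeq \Map_{\cD(\Vect_K)}\bigl(C,\Fil^0 D\bigr).
\]
Taking $C=K$ and recalling that the object ``$K$ with the trivial filtration'' is by definition $\mathrm{triv}(K)$, we obtain $\Map_{\cD_{\fil}(K)}(K,D)\simeq \Map_{\cD(\Vect_K)}(K,\Fil^0 D)\simeq \Omega^\infty \Fil^0 D$.

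To upgrade this from mapping spaces to the $K$-linear mapping complex $\RHom$, I would use that both $\cD_{\fil}(K)$ and $\cD(\Vect_K)$ are stable and $K$-linear, and that both $\mathrm{triv}$ and $\Fil^0$ are exact and $K$-linear. Indeed, $\Fil^0$ is evaluation at $0$, and $\mathrm{triv}$ is given levelwise by $C$ or $0$. Hence the adjunction enriches over $\cD(\Vect_K)$. Concretely, one can replace $D$ by shifts $D[n]$ and observe that $\mathrm{triv}$ and $\Fil^0$ commute with shifts. This identifies $\pi_{-n}$ of the mapping spectra on both sides, naturally in $D$, so we get $\RHom_{\cD_{\fil}(K)}(K,D)\simeq \RHom_{\cD(\Vect_K)}(K,\Fil^0D)\simeq \Fil^0 D$.

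Alternatively, one can verify the adjunction directly, which serves as a sanity check. A map of filtered objects $\mathrm{triv}(K)\to D$ is a natural transformation of functors $(\bZ,\leq)^{\op}\to\cD(\Vect_K)$. The functor $\mathrm{triv}(K)$ is the left Kan extension of $K$ placed at $0$ along the inclusion of $\{n\leq 0\}$, extended by zero for $n>0$. Since $\mathrm{triv}(K)$ restricted to $n\le 0$ is constant on a category with terminal object $0$ in the opposite order, such maps correspond to maps $K\to \Fil^0 D$. The only point to check is that the zero values for $n>0$ impose no extra conditions. This holds because maps out of the zero object are contractible, and the diagram on $\{n\leq 0\}$ is the left Kan extension from $\{0\}$.

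The main (mild) obstacle is the bookkeeping of the enrichment, that is, checking that the equivalence is one of $K$-module spectra rather than only of spaces. Exactness and linearity of $\Fil^0$ make this formal, so no serious difficulty is expected.
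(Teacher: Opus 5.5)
Your proposal is correct and takes the same approach as the paper, whose proof simply invokes the adjunction $\mathrm{triv}\dashv\Fil^0$ with $C=K$. Your remarks on upgrading the equivalence from mapping spaces to $K$-linear mapping complexes (via exactness and compatibility with shifts), and your direct check via left Kan extension from $\{0\}$, spell out details the paper leaves implicit.
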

\begin{proof}
    This follows immediately from the definition of $\mathrm{triv} \colon \cD(\Vect_K) \to \cD_\fil(K)$ as the left adjoint to $\Fil^0\colon \cD_\fil(K) \to \cD(\Vect_K)$. 
\end{proof}

\begin{construction}\label{fil0Bdr construction} We now formally define the symmetric monoidal functor \[D\mapsto\Fil^0(D\otimes_{K} \BB_{\dR})\colon \cD_{\fil}(K)\to \cD(G_K;\BB_{\dR}^+)\]
which is a derived analog of the functor appearing on the RHS of (\ref{bdrplus comparison iso}).

The $\bZ$-indexed filtration $\Fil^n\BB_{\dR}\coloneqq t^n\BB_{\dR}^+$ on the commutative algebra object $\BB_{\dR}$ of the abelian symmetric monoidal category $\Mod(G_K;\bQ_p)$ makes $\BB_{\dR}$ into a commutative algebra object of the filtered derived $\infty$-category $\cD_{\fil}(G_K;\bQ_p)\coloneqq \cD(G_K;\bQ_p)_{\fil}$. We can then consider the composition
\begin{multline}\label{eqn: fil0Bdr defining composition}
\cD_{\fil}(K)\xrightarrow{}\Mod_{K^{\delta}}\cD_{\fil}(G_K;\bZ)\xrightarrow{D\mapsto D\otimes_{K^{\delta}}\BB_{\dR}}\Mod_{\BB_{\dR}}\cD_{\fil}(G_K;\bQ_p)\\ \xrightarrow{\Fil^0}\Mod_{\Fil^0\BB_{\dR}}\cD(G_K;\bQ_p)=\cD(G_K;\BB_{\dR}^+)
\end{multline}
where the first functor sends an object $D\in \cD_{\fil}(K)$ to the corresponding object of $\Mod_{K^{\delta}}\cD_{\fil}(G_K;\bZ)$, where $K^{\delta}$ denotes $K$ viewed as a commutative algebra object of $\cD_{\fil}(G_K;\bZ)$ endowed with trivial filtration and Galois action, and with {\it discrete} topology. 

The first two functors in (\ref{eqn: fil0Bdr defining composition}) are symmetric monoidal, and the last functor $\Fil^0$ has a structure of a lax symmetric monoidal functor. We reinterpret $\BB_{\dR}^+$-modules as filtered $\BB_{\dR}$-modules satisfying an additional property, to prove that the composition (\ref{eqn: fil0Bdr defining composition}) is strictly symmetric monoidal:
\end{construction}

\begin{lm}\label{lemma:filtered-bdr-bdr+}
Consider the full $\infty$-subcategory $\cL$ of $\cD_{\fil}(G_K;\BB_{\dR})$ consisting of filtered objects $M$ such that the natural map $\Fil^0M\otimes_{\Fil^0\BB_{\dR}}\Fil^n\BB_{\dR}\to \Fil^nM$ is an equivalence, for every $n\in\bZ$. Then the composition $\cL\hookrightarrow\cD_{\fil}(G_K;\BB_{\dR})\xrightarrow{\Fil^0}\cD(G_K;\BB_{\dR}^+)$ is an equivalence of symmetric monoidal $\infty$-categories.

Moreover, the functor $\cD_{\fil}(K)\xrightarrow{D\mapsto D\otimes_K \BB_{\dR}}\Mod_{\BB_{\dR}}\cD_{\fil}(G_K;\bQ_p)$ lands inside $\cL$, and in particular the composition (\ref{eqn: fil0Bdr defining composition}) is strictly symmetric monoidal.
\end{lm}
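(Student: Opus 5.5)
The plan is to build an explicit inverse to $\Fil^0|_{\cL}$. Given $N\in\cD(G_K;\BB_{\dR}^+)$, set $\Phi(N)$ to be the filtered object with $\Fil^n\Phi(N)\coloneqq N\otimes_{\BB_{\dR}^+}\Fil^n\BB_{\dR}$, with transition maps induced from the inclusions $\Fil^{n+1}\BB_{\dR}\to\Fil^n\BB_{\dR}$. Concretely, $\Phi$ is the base change $N\mapsto \mathrm{triv}(N)\otimes_{\mathrm{triv}(\BB_{\dR}^+)}\BB_{\dR}$ along the map of filtered commutative algebras $\mathrm{triv}(\BB_{\dR}^+)\to\BB_{\dR}$, which is adjoint to the identity $\BB_{\dR}^+\to\Fil^0\BB_{\dR}$. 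Since $\mathrm{triv}$ is symmetric monoidal, this description shows that $\Phi$ is a symmetric monoidal functor $\cD(G_K;\BB_{\dR}^+)\to\Mod_{\BB_{\dR}}\cD_{\fil}(G_K;\bQ_p)$. The Day convolution formula $\colim_{i+j\ge n}$ applied to this relative tensor product identifies $\Fil^n$ with $N\otimes_{\BB_{\dR}^+}\Fil^n\BB_{\dR}$. Here one uses that $\Fil^i\BB_{\dR}\otimes_{\BB_{\dR}^+}\Fil^j\BB_{\dR}\simeq\Fil^{i+j}\BB_{\dR}$, because $t$ is a non-zero-divisor and $\Fil^i\BB_{\dR}=t^i\BB_{\dR}^+$ is an invertible (free rank one) $\BB_{\dR}^+$-module.

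Next I check the two compositions. First, $\Fil^0\Phi(N)=N\otimes_{\BB_{\dR}^+}\BB_{\dR}^+\simeq N$. Second, $\Phi(N)$ lies in $\cL$ by construction: $\Fil^0\Phi(N)\otimes\Fil^n\BB_{\dR}\to\Fil^n\Phi(N)$ is the identity. For $M\in\cL$ there is a counit map $\Phi(\Fil^0M)\to M$, namely the $\BB_{\dR}$-linear extension of $\mathrm{triv}(\Fil^0M)\to M$. On $\Fil^n$ this map is exactly $\Fil^0M\otimes\Fil^n\BB_{\dR}\to\Fil^nM$, which is an equivalence by the definition of $\cL$. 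So $\Fil^0|_{\cL}$ and $\Phi$ are mutually inverse equivalences. Moreover $\cL$ is closed under the tensor product, being the essential image of the symmetric monoidal $\Phi$. The lax structure on $\Fil^0$ then becomes strict on $\cL$, since $\Phi$ is strict and inverse to it. The technical point I expect to be the main obstacle is making the colimit-over-$i+j\ge n$ computation rigorous in the $\infty$-categorical (condensed/pro\'etale) setting. I would handle it by noting that the diagram $i+j\ge n$ is cofinally controlled by the antidiagonal $i+j=n$. On that antidiagonal all terms are canonically $N\otimes\Fil^n\BB_{\dR}$, with invertible transition maps, because multiplication by $t$ is an isomorphism $\Fil^i\to\Fil^{i+1}$ of $\BB_{\dR}^+$-modules. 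Hence the colimit is computed by any single term.

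For the second statement, let $D\in\cD_{\fil}(K)$. Then $\Fil^n(D\otimes_K\BB_{\dR})=\colim_{i+j\ge n}\Fil^iD\otimes_K t^j\BB_{\dR}^+$. I compare this with $\Fil^0(D\otimes\BB_{\dR})\otimes_{\BB_{\dR}^+}t^n\BB_{\dR}^+$. Tensoring over $\BB_{\dR}^+$ with the invertible module $t^n\BB_{\dR}^+$ commutes with colimits. It carries the term $\Fil^iD\otimes t^{j}\BB_{\dR}^+$ to $\Fil^iD\otimes t^{j+n}\BB_{\dR}^+$, which reindexes the diagram $\{i+j\ge 0\}$ bijectively onto $\{i+j'\ge n\}$. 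So the natural map is an equivalence and the functor lands in $\cL$. Finally, composition (\ref{eqn: fil0Bdr defining composition}) factors as a symmetric monoidal functor into $\cL$ followed by $\Fil^0|_{\cL}$, which we have just shown is a symmetric monoidal equivalence. Therefore the composition is strictly symmetric monoidal.
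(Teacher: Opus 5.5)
Your argument follows the paper's proof. The inverse to $\Fil^0|_{\cL}$ is scalar extension along the filtered algebra map $\mathrm{triv}(\BB_{\dR}^+)\to\BB_{\dR}$. Membership of $D\otimes_K\BB_{\dR}$ in $\cL$ comes from termwise reindexing of the Day convolution colimit. Strict monoidality follows by passing through $\cL\simeq\cD(G_K;\BB_{\dR}^+)$. The one problem is your paragraph justifying $\Fil^n\Phi(N)\simeq N\otimes_{\BB_{\dR}^+}\Fil^n\BB_{\dR}$, a step the paper simply calls evident. The claim is true, but the reason you give for it is wrong.

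\emph{What goes wrong.}
\begin{enumerate}
\item In the Day convolution colimit for $\mathrm{triv}(X)\otimes Y$, the antidiagonal terms are $\Fil^i\mathrm{triv}(X)\otimes\Fil^{n-i}Y$. These equal $X\otimes\Fil^{n-i}Y$ for $i\le 0$ and vanish for $i>0$, so they are not all $N\otimes\Fil^n\BB_{\dR}$.
\item Distinct antidiagonal points are incomparable, so the arrows of the diagram run through the layer $i+j=n+1$. In the $j$-direction they are induced by the filtration maps of $Y$, here the inclusions $t^{a+1}\BB_{\dR}^+\subset t^{a}\BB_{\dR}^+$. In the $i$-direction they are the transition maps of $\mathrm{triv}(X)$, which are identities for $i\le0$.
\item Multiplication by $t$ is not a morphism of this diagram.
\item ``Any single term computes the colimit'' is false: the terms $(i,n-i)$ with $i<0$ map in by proper inclusions. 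Only the $(0,n)$ term maps isomorphically to the colimit.
\end{enumerate}

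\emph{The correct reason.} It is formal and uses nothing about $t$. We have $\mathrm{triv}(X)\simeq X\otimes\mathrm{triv}(\mathbf{1})$, where $X$ acts levelwise and $\mathrm{triv}(\mathbf{1})$ is the Day convolution unit. Since $X\otimes-$ preserves colimits, $\Fil^n\bigl(\mathrm{triv}(X)\otimes Y\bigr)\simeq X\otimes\Fil^nY$ for every filtered $Y$. The functor $\Fil^n$ preserves colimits, so it commutes with the bar construction computing $\Phi(N)$, whose terms are $\mathrm{triv}\bigl(N\otimes(\BB_{\dR}^+)^{\otimes k}\bigr)\otimes\BB_{\dR}$. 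This gives $\Fil^n\Phi(N)\simeq N\otimes_{\BB_{\dR}^+}\Fil^n\BB_{\dR}$.

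The identity $\Fil^i\BB_{\dR}\otimes_{\BB_{\dR}^+}\Fil^j\BB_{\dR}\simeq\Fil^{i+j}\BB_{\dR}$ is not needed for this step. It is needed only in the second part, where you use it correctly. With this replacement your proof is complete.
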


\begin{proof}
The map of commutative algebras $\BB_{\dR}^+\to \BB_{\dR}$ in $\Mod(G_K;\bQ_p)$ can be upgraded to a filtered map, where $\BB_{\dR}^+$ is equipped with a trivial filtration, and $\BB_{\dR}$ is equipped with the standard $t$-adic filtration. Scalar extension along this map defines a symmetric monoidal functor $\cD(G_K;\BB_{\dR}^+)\to \cD_{\fil}(G_K;\BB_{\dR})$. It lands inside the subcategory $\cL$, and one readily sees that it is inverse to the functor $\Fil^0\colon \cL\to \cD(G_K;\BB_{\dR}^+)$.

Every object $M$ of the form $D\otimes_K \BB_{\dR}$ with $D\in \cD_{\fil}(K)$ is contained in $\cL$, because the map $\Fil^0M\otimes_{\BB_{\dR}^+}t^n\BB_{\dR}^+\to \Fil^nM$ can be presented as a map of colimits
\[
\colim\limits_{i+j\geq 0}(\Fil^iD\otimes_K t^{j}\BB_{\dR}^+\otimes_{\BB_{\dR}^+}t^n \BB^+_{\dR})\to \colim\limits_{i+j\geq 0}\Fil^iD\otimes_K t^{n+j}\BB_{\dR}^+
\]
defining the Day convolution product, and this map is an equivalence already on each term of the colimit.

To check that the lax symmetric monoidal functor $D\mapsto \Fil^0(D\otimes_K \BB_{\dR})\colon \cD_{\fil}(K)\to \cD(G_K;\BB_{\dR}^+)$ is strictly symmetric monoidal we may compose it with the fully faithful symmetric monoidal embedding $\cD(G_K;\BB_{\dR}^+)\simeq\cL\hookrightarrow\cD_{\fil}(G_K;\BB_{\dR})$. This composition is simply the functor $D\mapsto D\otimes_K \BB_{\dR}$ which is strictly symmetric monoidal by construction.
\end{proof}

Before we discuss the description of $\cD^b(\Rep^{\dR}_{\bQ_p} G_K)$, we need the following two lemmas. 

\begin{lemma}\label{lemma:heart-t-structure} Let $(V, D, \alpha)$ be a triple of a finite-dimensional continuous $\bQ_p$-linear representation $V\in \Rep_{\bQ_p} G_K$, a filtered complex $D\in \cD_\fil(K)$, and an isomorphism $\Fil^0(D\otimes_K \BB_\dR) \simeq V \otimes_{\QQ_p} \BB^+_\dR$. Then, for any integer $i\in \ZZ$, the object $\Fil^i D$ is concentrated in degree $0$ and the transition map $\Fil^{i+1} D \to \Fil^i D$ is injective.
\end{lemma}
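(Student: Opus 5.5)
The plan is to recover the filtered pieces of $D$ from $\Fil^0(D\otimes_K\BB_\dR)$ by taking graded pieces and Galois invariants, using Tate's computation $\hH^j_{\cont}(G_K,\bC_p(m))=0$ for $m\neq 0$, and $=K$ for $m=0$, $j\in\{0,1\}$. Two preliminary observations come first. The source $V\otimes_{\QQ_p}\BB^+_\dR$ is discrete, i.e.\ concentrated in degree $0$. The map $\BB^+_\dR\to\BB_\dR^+$ given by multiplication by $t$ is injective with cokernel $\bC_p(1)$. By Lemma~\ref{lemma:filtered-bdr-bdr+}, the object $M\coloneqq D\otimes_K\BB_\dR$ lies in $\cL$, so $\Fil^nM\simeq t^n\Fil^0 M\simeq V\otimes_{\QQ_p} t^n\BB_\dR^+$. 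Consequently
\[
\gr^n M\simeq V\otimes_{\QQ_p}\bC_p(n)
\]
is concentrated in degree $0$ for every $n$.

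Next I compute $\gr^n M$ from the $D$ side. Graded pieces commute with Day convolution, because $\gr$ is symmetric monoidal from filtered to graded objects. Hence
\[
\gr^n M\simeq\bigoplus_{i}\gr^i D\otimes_K \bC_p(n-i).
\]
Here I should check that only finitely many $\gr^i D$ are nonzero. I expect this to follow from the fact that $V\otimes\BB_\dR^+$ is finite over $\BB_\dR^+$, but it may have to be deduced after the next step. Since the left-hand side is discrete, each summand $\gr^iD\otimes_K\bC_p(n-i)$ is discrete. As $\bC_p$ is faithfully flat over $K$, each $\gr^iD$ is concentrated in degree $0$.

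Then I take $G_K$-invariants, i.e.\ apply $\RHom(\QQ_p,-)$ in $\cD(G_K;\QQ_p)$. On the right-hand side, Tate's theorem gives $\RGamma(G_K,\gr^n M)\simeq \gr^nD\oplus \gr^nD[-1]$, which is automatically consistent. This step is therefore not needed for discreteness; it only gives the relation $\dim\gr^nD=\dim(V\otimes\bC_p(n))^{G_K}$. In particular $\sum_n\dim\gr^nD\le\dim V<\infty$, which also resolves the finiteness issue above.

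It remains to pass from graded pieces to filtered pieces, and I expect this to be the main obstacle. The filtration on $D$ is not assumed complete or exhaustive, so knowing that $\gr^iD$ is discrete does not immediately show that $\Fil^iD$ is discrete. I would handle this as follows. Since $\gr^i D=0$ for all but finitely many $i$, the maps $\Fil^{i+1}D\to\Fil^iD$ are equivalences for $i\gg0$ and for $i\ll0$. For $i\gg0$, I would show $\Fil^iD=0$: after tensoring with $\BB_\dR$, the piece $\Fil^iD\otimes t^{-i}\BB^+_\dR$ contributes to $\Fil^0M\simeq V\otimes\BB^+_\dR$, which is $t$-adically separated. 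I would make this precise by computing $\Fil^n M$ for $n$ large, where it is $\Fil^{\infty}D\otimes\BB_\dR\oplus(\text{stuff})$ compared with $V\otimes t^n\BB_\dR^+$, and taking the limit over $n$. This gives $\Fil^\infty D\otimes_K\BB_\dR\simeq\lim_n V\otimes t^n\BB^+_\dR=0$, hence $\Fil^{\infty}D=0$. For $i\ll 0$, the object $\Fil^i D$ is then a finite iterated extension of the discrete objects $\gr^j D$, so it is discrete. Finally, the long exact sequence of $\Fil^{i+1}D\to\Fil^iD\to\gr^iD$, with all three terms in degree $0$, shows that the transition map is injective.
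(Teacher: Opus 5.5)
Your proposal is correct, and its skeleton is the paper's. You get discreteness and finiteness of the $\gr^iD$ from $\gr^n(D\otimes_K\BB_\dR)\simeq V\otimes_{\QQ_p}\bC_p(n)$. You show that the stable value $H=\Fil^dD$ ($d\gg0$) vanishes. You then build each $\Fil^iD$ as a finite extension of the discrete $\gr^jD$.

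Two steps are done differently.

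\emph{Finiteness of the $\gr^iD$.} The paper simply compares $\bC_p$-dimensions in $V\otimes_{\QQ_p}\bC_p\simeq\bigoplus_i\gr^iD\otimes_K\bC_p(-i)$. Your passage through Tate's computation and the Hodge--Tate inequality is valid but unnecessary.

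\emph{Vanishing of $H$.} The paper forms the cofiber $D'$ of the constant filtered object $\underline{H}\to D$ and applies $\Fil^0(-\otimes_K\BB_\dR)$. This exhibits $H\otimes_K\BB_\dR$ as the fiber of a map between finitely generated $\BB_\dR^+$-modules in degree $0$, which forces $H=0$. You instead apply $\lim_n\Fil^n$ and use $\lim_n V\otimes_{\QQ_p}t^n\BB_\dR^+=0$.

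To make your ``$\oplus(\text{stuff})$'' precise, you need the same triangle $\underline{H}\to D\to D'$. The correct statement is a fiber sequence $H\otimes_K\BB_\dR\to\Fil^nM\to\Fil^n(D'\otimes_K\BB_\dR)$, valid for every $n$. It is not a direct sum, but exactness of $\lim_n$ suffices. Here $\lim_n\Fil^n(D'\otimes_K\BB_\dR)=0$ because $D'$ is a finite extension of the objects $\gr^iD$ placed in filtration degree $i$.

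Since the limit is derived, the vanishing of $\mathrm{R}\lim_n t^n\BB_\dR^+$ uses $t$-adic completeness of $\BB_\dR^+$, not merely separatedness. With these details filled in, your route is as short as the paper's; it trades the coherence argument for derived $t$-completeness.
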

\begin{proof}
We have that
\[
V\otimes_{\bQ_p}\bC_p\simeq\Fil^0(D\otimes_K \BB_{\dR})/t\simeq \gr^0(D\otimes_K \BB_{\dR})\simeq \bigoplus\limits_{i\in\bZ} \gr^i D\otimes_K \bC_p(-i)
\]
is concentrated in degree $0$, where the second isomorphism follows from Lemma~\ref{lemma:filtered-bdr-bdr+}. Therefore, each $\gr^i D$ is concentrated in degree $0$, and almost all of these graded quotients are zero. In particular, objects $\Fil^iD$ stabilize as $i\to  \infty$ and as $i\to -\infty$. Assume that $\Fil^d D = \Fil^{d+1} D = \dots $ and put $H\coloneqq \Fil^d D$ to be this stable value. 

Define the object $\ud{H} \in \cD_\fil(K)$ such that $\Fil^i \ud{H}=H$ for any $i\in \ZZ$ and all transition maps are the identity maps. Then there is a natural morphism $\ud{H} \to D$ and we set $D'\coloneqq \mathrm{Cone}(\ud{H} \to D)$. 

{\it Step~$1$. We show that each $\gr^i D'$ is a finite-dimensional vector space concentrated in degree $0$ and only finitely many of $\gr^i D'$ are non-zero.} We note that $\gr^i \ud{H}\simeq 0$ for any $i\in \ZZ$. Thus, we conclude that $\gr^i D \simeq \gr^i D'$. Finally, the first paragraph of the proof showed that $\gr^i D$ are finite-dimensional $K$-vector spaces and all but finitely many of them vanish. 

{\it Step~$2$. We show that each $\Fil^i D'$ is a vector space concentrated in degree $0$ and the transition maps $\Fil^{i+1} D' \to \Fil^{i} D'$ are injective.} Using Step~$1$, it suffices to show that $\lim\limits_{n} \Fil^n D'=0$. This follows from the following sequence of equalities 
\[
\lim_n \Fil^n D' \simeq \mathrm{Cone}\bigl(\lim_n\Fil^n\ud{H} \to \lim_n \Fil^n D\bigr) \simeq \mathrm{Cone}(H\to H)\simeq 0. 
\]

{\it Step~$3$. We show that $H\simeq 0$.}  First, we note that Step~$2$ and the fact that there are no higher Ext groups in $\Vect_K$ imply that the filtration on $D'$ splits non-canonically. This implies that we have a non-canonical isomorphism
\[
\Fil^0(D'\otimes_K \BB_\dR) \cong \bigoplus_{i\in \ZZ}\gr^i D' \otimes_K t^{-i} \BB^+_\dR.
\]
Furthermore, we observe that $\Fil^0(\ud{H}\otimes_K \BB_\dR) \simeq H\otimes_K \BB_\dR$ and we have $\Fil^0(D\otimes_K B_\dR)\simeq V\otimes_{\bQ_p} \BB_\dR^+$. Thus, after applying the functor $\Fil^0(-\otimes_K \BB_\dR)$ to the exact triangle $\ud{H} \to D \to D'$, we get the following fiber sequence 
\[
H\otimes_K \BB_\dR \to V\otimes_{\bQ_p} \BB_\dR^+ \to \bigoplus_{i\in \ZZ}\gr^i D' \otimes_K t^{-i} \BB^+_\dR
\]
in $\cD(\BB_\dR^+)$. Now the assumption that $V$ is finite-dimensional and Step~$1$ guarantee that the last two terms in the above exact triangle are concentrated in degree $0$ and are finitely generated over $\BB_\dR^+$. This implies that $H\otimes_K \BB_\dR$ lies in $\cD_{\mathrm{coh}}(\BB_\dR^+)$. However, this is possible only when $H=0$. 

Finally, the conclusions of Steps~$2$ and $3$ finish the proof of the lemma. 
\end{proof}

\begin{lemma}\label{lemma:exts-are-universal} Let $\cA$ be an abelian category. Then the sequence $\{\Ext^i_{\cA}(X, -) \coloneqq \Hom_{D(\cA)}(X, -[i])\colon \cA \to \mathrm{Ab} \}_{i\in \ZZ_{\geq 0}}$ is a universal $\delta$-functor.
\end{lemma}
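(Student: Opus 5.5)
The plan is to follow the standard criterion for universality of $\delta$-functors, namely Grothendieck's effaceability criterion in the form of Buchsbaum's \cite[Proposition~4.2]{Buchsbaum}. That criterion (already used in the proof of Lemma~\ref{lem: repgk derived of its heart}) is well suited here because $\cA$ need not have enough injectives. First, the family $\{\Ext^i_{\cA}(X,-)\}_{i\geq 0}$ is a cohomological $\delta$-functor. A short exact sequence $0\to A\to B\to C\to 0$ in $\cA$ gives a distinguished triangle $A\to B\to C\to A[1]$ in $D(\cA)$. Applying $\Hom_{D(\cA)}(X,-)$ then gives a long exact sequence. Its connecting maps are natural in morphisms of short exact sequences, since such morphisms induce morphisms of triangles with compatible boundary maps $C\to A[1]$. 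Moreover, $\Ext^i_{\cA}(X,-)=0$ for $i<0$, and $\Ext^0_{\cA}(X,-)=\Hom_{\cA}(X,-)$ because $\cA\to D(\cA)$ is fully faithful onto the heart.

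Second, by Buchsbaum's criterion it suffices to show that each $\Ext^i_{\cA}(X,-)$ with $i\geq 1$ is effaceable. Concretely, for every $A\in\cA$ and every class $u\in \Ext^i_{\cA}(X,A)$ we want a monomorphism $A\hookrightarrow A'$ in $\cA$ such that the image of $u$ in $\Ext^i_{\cA}(X,A')$ vanishes. This is weaker than the usual effaceability, which asks for a single monomorphism killing all classes, and it is exactly the hypothesis Buchsbaum's proposition uses. If one prefers the classical form, the argument below also produces it, by pushing out along all classes simultaneously.

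Third, we construct the monomorphism using the Yoneda description. Every morphism $u\colon X\to A[i]$ in $D(\cA)$, for $i\geq 1$, is represented by an exact sequence $0\to A\to E_1\to\cdots\to E_i\to X\to 0$ in $\cA$. To see this, represent $u$ by a roof $X\xleftarrow{\sim} P\to A[i]$ and use truncations. Alternatively, argue by induction on $i$: a class in $\Ext^1$ gives an extension $0\to A\to E_1\to C\to 0$ through the cone, and $u$ factors as the Yoneda product of a class $X\to C[i-1]$ with the class $C\to A[1]$ of that extension. Take $A'\coloneqq E_1$ in the case $i=1$, and more generally take $A'$ to be the first term of such a splicing. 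The class $u$ then factors through the boundary map $C\to A[1]$ of $0\to A\to E_1\to C\to 0$. The composite $C\to A[1]\to E_1[1]$ is zero because it consists of two consecutive maps in a distinguished triangle. Hence the image of $u$ in $\Ext^i_{\cA}(X,E_1)$ vanishes.

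The main obstacle is the factorization step, namely showing that every $u\in\Hom_{D(\cA)}(X,A[i])$ factors as $X\to C[i-1]\to A[i]$, where the second map is the class of an honest short exact sequence in $\cA$ starting at $A$. I would handle it as follows. Represent $u$ by a morphism of complexes $P\to A[i]$ with $P\to X$ a quasi-isomorphism, where $P$ is bounded above in degrees $\leq 0$. Replace $P$ by its truncation $\tau_{\geq -i}$ in the quotient sense, so that $P^{-i}$ maps to $A$. Set $E_1$ to be the pushout of $P^{-i}\to A$ along $P^{-i}\to P^{-i+1}/\mathrm{im}$, which is the standard Yoneda construction. Then check directly that $A\to E_1$ is a monomorphism, using exactness of $P$ in degree $-i$ after truncation, and that $u$ dies in $\Ext^i_{\cA}(X,E_1)$.
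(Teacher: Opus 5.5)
Your proposal is correct and matches the paper's proof: the paper likewise promotes $\{\Ext^i_{\cA}(X,-)\}$ to a $\delta$-functor, invokes the dual of Buchsbaum's criterion with classwise effaceability, and kills $u$ by the monomorphism $A\hookrightarrow E_1$ into the first term of a Yoneda exact sequence representing $u$. The only difference is that the paper cites the Stacks Project (Tag 06XU) for the Yoneda description instead of re-deriving it via roofs and truncations as you do; your aside that pushing out along all classes at once gives classical effaceability would need coproducts that a general $\cA$ need not have, but nothing in your argument depends on it.
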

We emphasize that we do not assume that $\cA$ has enough injective or projective objects. This assumption would be inadequate for our applications. 
\begin{proof}
    First, \cite[\href{https://stacks.math.columbia.edu/tag/06XP}{Tag 06XP}]{stacks-project} ensures that $\{\Ext^i_{\cA}(X, -)\}_{i\in \ZZ_{\geq 0}}$ can indeed be promoted to a $\delta$-functor. Then the dual version of \cite[Proposition~4.2]{Buchsbaum} implies that it suffices to show that, for all $i>0$, for any $Y\in \cA$ and $u\in \Ext^i_\cA(X, Y)$, there is an injection $Y\hookrightarrow Y'$ such that the image of $u$ in $\Ext^i_{\cA}(X, Y')$ vanishes. Now \cite[\href{https://stacks.math.columbia.edu/tag/06XU}{Tag 06XU}]{stacks-project} identifies $\Ext^i_\cA(X, Y)$ with the Yoneda Ext group. Therefore, the class $u$ is represented by an exact sequence $0 \to Y \to Z_{i-1} \to \dots{} \to Z_0 \to X \to 0$. One readily checks that the image of $u$ in $\Ext^i_\cA(X, Z_{i-1})$ vanishes. 
\end{proof}

Now we are ready to prove the main result of this section.

\begin{thm}\label{thm: de rham derived category pullback}
The derived $\infty$-category $\cD^b(\Rep^{\dR}_{\bQ_p}G_K)$ is equivalent, as a symmetric monoidal $\infty$-category, to the fiber product of the diagram of $\infty$-categories
\begin{equation}\label{repdr pullback diagram}
\begin{tikzcd}
\cD^b(\Rep_{\bQ_p}G_K)\arrow[rd, "{V\mapsto V\otimes_{\bQ_p}\BB_{\dR}^+}", swap] & & \cD_{\fil}(K)\arrow[ld, "{D\mapsto \Fil^0(D\otimes_K \BB_{\dR})}"] \\
& \cD(G_K;\BB_{\dR}^+)
\end{tikzcd}
\end{equation}
Here, the left functor is the composition of the fully faithful functor from Lemma \ref{lem: repgk derived of its heart} with the base change functor $\cD(G_K;\bQ_p)\to \cD(G_K;\BB_{\dR}^+)$, and the right functor was introduced in Construction \ref{fil0Bdr construction}.
\end{thm}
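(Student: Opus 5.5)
We construct a comparison functor $\Phi\colon \cD^b(\Rep^{\dR}_{\bQ_p}G_K)\to \cP$, where $\cP$ is the fiber product (\ref{repdr pullback diagram}), and show it is fully faithful and essentially surjective.

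\emph{The functor.} On the abelian level we have the exact inclusion $\Rep^{\dR}_{\bQ_p}G_K\subset \Rep_{\bQ_p}G_K$. We also have the exact functor $D_{\dR}\colon \Rep^{\dR}_{\bQ_p}G_K\to \Fil(\Vect_K)$ with values in filtered vector spaces with exhaustive, separated, finite filtrations. These are related by a natural isomorphism $V\otimes\BB_{\dR}^+\simeq \Fil^0(D_{\dR}(V)\otimes_K\BB_{\dR})$, and all of this is compatible with tensor products. Since both functors are exact, they extend to symmetric monoidal functors on $\cD^b$. The natural isomorphism extends to an equivalence of the two composites to $\cD(G_K;\BB_{\dR}^+)$: both composites are exact functors out of $\cD^b$ of an abelian category, and they agree naturally (and monoidally) on the heart. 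To make this rigorous, I would realize everything on bounded chain complexes, so that the comparison is a natural quasi-isomorphism of complexes of sheaves. This yields a symmetric monoidal $\Phi$.

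\emph{Full faithfulness.} By d\'evissage on bounded complexes (the t-structure truncations), it suffices to show that for $V,W$ de Rham, $\Ext^i$ in $\Rep^{\dR}$ maps isomorphically to $\pi_{-i}$ of the mapping space in $\cP$. Mapping spectra in a pullback are the fiber product of mapping spectra. Replacing $V$ by the unit via duality, we must show that
\[
\RHom_{\Rep^{\dR}}(\bQ_p,W)\to \RGamma_{\cont}(G_K,W)\times_{\RGamma(G_K,W\otimes\BB_{\dR}^+)}\Fil^0D_{\dR}(W).
\]
Here I use Lemma \ref{lem: repgk derived of its heart} for the first factor and Lemma \ref{rhom in dfil} for the third. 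The target is a Bloch--Kato-type complex. Bloch--Kato computations (using $\hH^0(G_K,\BB_{\dR}^+ \otimes W)=\Fil^0 D_{\dR}(W)$) give the following. Its $\hH^0$ is $\Hom(\bQ_p,W)$. Its $\hH^1$ is $\hH^1_g(G_K,W)=\ker(\hH^1(W)\to \hH^1(W\otimes\BB_{\dR}^+))$, which classifies de Rham extensions. Its $\hH^2$ is the cokernel of $\hH^1(W)\to \hH^1(W\otimes\BB_{\dR}^+)$ together with the kernel on $\hH^2$. Thus the target is concentrated in degrees $\le 2$, and degrees $0,1$ match $\Ext^0,\Ext^1$ in $\Rep^{\dR}$ (extensions of de Rham by de Rham inside $\Rep$ correspond to Yoneda extensions). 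To conclude, I would use the Buchsbaum criterion, as in Lemma \ref{lemma:exts-are-universal} and Lemma \ref{lem: repgk derived of its heart}. Both sides are $\delta$-functors in $W$, and they agree in degree $0$. So it suffices that every class in degree $\ge1$ of the target dies after an injection $W\hookrightarrow W'$ of de Rham representations. For degree $1$, take the extension itself. For degree $2$, use duality as in Lemma \ref{lem: repgk derived of its heart}: $\hH^2_g$ is dual to $\hH^0$ modulo $\hH^1_e$-type terms. Concretely, I would embed $W$ into $W\otimes\Sym^r S$ with $S$ as in Lemma \ref{lemma:killing-homs-into-E(1)-galois}, which is unramified and hence de Rham, and check that this kills the relevant $\hH^2$. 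I expect this degree-$2$ vanishing to be the main obstacle, since one must control the cokernel term $\hH^1(G_K,W\otimes\BB_{\dR}^+)/\mathrm{im}$, not only $\hH^2(G_K,W)$.

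\emph{Essential surjectivity.} $\cP$ inherits a t-structure: take objects whose $\cD^b(\Rep)$-component lies in the standard aisle. Given $(V,D,\alpha)\in\cP$, I would show that $D$ is forced to be "strict". The argument is as in Lemma \ref{lemma:heart-t-structure}, which shows that the heart consists of triples with $V$ a representation and $D$ a genuine finite filtered vector space, i.e. exactly de Rham representations. Checking that the truncations of $V$ lift compatibly to $D$ uses the same $\gr^0$ computation $V\otimes\bC_p\simeq\bigoplus\gr^iD\otimes\bC_p(-i)$ applied to each cohomology group. Then every object is an iterated extension of shifted heart objects. Since $\Phi$ is fully faithful and exact, and its essential image contains the heart and is closed under extensions, $\Phi$ is essentially surjective.
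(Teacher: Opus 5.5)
Your outline matches the paper's proof step by step: the functor built from $D_{\dR}$, reduction to $\RHom(\bQ_p,-)$ via the cartesian square of mapping complexes, Buchsbaum effaceability in degrees $1$ and $2$, the heart via Lemma~\ref{lemma:heart-t-structure}, and closure under extensions. However, the step you flag as the main obstacle is a genuine gap: you have no way to control $\hH^2(\cD_K,W)$.

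Lemma~\ref{lemma:killing-homs-into-E(1)-galois} only handles the quotient $\hH^2_{\cont}(G_K,W)$. The subobject $\coker\bigl(\hH^1_{\cont}(G_K,W)\to\hH^1_{\cont}(G_K,W\otimes\BB_{\dR}^+)\bigr)$ is not addressed by local Tate duality. Your remark that ``$\hH^2_g$ is dual to $\hH^0$ modulo $\hH^1_e$-type terms'' does not say which embedding kills it.

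The missing ingredient is an actual computation of this group. Since $\Fil^0D_{\dR}(W)$ sits in degree $0$, the cartesian square gives $\hH^2(\cD_K,W)\simeq\hH^1_{\cont}\bigl(G_K,W\otimes(\BB_{\dR}^+/\bQ_p)\bigr)$. The paper identifies this, naturally in $W$, with $\bigl(D_{\cris}(W^{\vee}(1))^{\varphi=1}\bigr)^{\vee}$. It does so using Fontaine's Tate duality for almost $\bC_p$-representations and the computation of $\Ext^1_{\cC(G_K)}(\BB^+_{\dR,n}/\bQ_p,-)$ on the Fargues--Fontaine curve (Lemmas~\ref{lem: h2 in Ck description} and~\ref{lemma:formula-for-ext1}).

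An alternative route would use Kato's identification of $\hH^1_{\cont}(G_K,W)\to\hH^1_{\cont}(G_K,W\otimes\BB_{\dR}^+)\simeq\Fil^0D_{\dR}(W)$ with the dual exponential, which is adjoint to $\exp_{W^{\vee}(1)}$. By the Bloch--Kato fundamental exact sequence, the kernel of $\exp_{W^{\vee}(1)}$ is $D_{\cris}(W^{\vee}(1))^{\varphi=1}/\hH^0_{\cont}(G_K,W^{\vee}(1))$. You could then efface the two graded pieces separately and compose two embeddings. Separately, your assertion that the target lives in degrees $\le 2$ silently uses Fontaine's vanishing $\hH^2_{\cont}(G_K,W\otimes\BB_{\dR}^+)=0$.

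Granting that identification, your embedding $W\hookrightarrow W\otimes\Sym^rS$ does work, and gives a slightly different effacement from the paper's. Dually, you need the map $D_{\cris}\bigl(W^{\vee}(1)\otimes(\Sym^rS)^{\vee}\bigr)^{\varphi=1}\to D_{\cris}(W^{\vee}(1))^{\varphi=1}$ to vanish.
\begin{itemize}
\item Elements of the source are $G_K$-equivariant maps $f\colon\Sym^rS\to W^{\vee}(1)\otimes\BB_{\cris}^{\varphi=1}$, and the map is $f\mapsto f(e_1^r)$.
\item Since $I_K$ acts trivially on $\Sym^rS$, $f$ lands in $N=(W^{\vee}(1)\otimes\BB_{\cris}^{\varphi=1})^{I_K}$. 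This is the space of $\varphi$-invariants of a $\varphi$-module of rank at most $\dim W$ over $\BB_{\cris}^{I_K}=\widehat{K_0^{\nr}}$, hence a $\bQ_p$-vector space of dimension at most $\dim W$.
\item For a Frobenius lift $\phi\in G_K$, one gets $f(e_1^r)\in\ker(\phi-1)\cap\mathrm{Im}(\phi-1)^r$ on $N$, which is zero once $r\ge\dim W$.
\end{itemize}

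The paper instead uses the surjection $\Sym^rT\otimes V\twoheadrightarrow V$, with $T$ the Kummer extension of $\bQ_p$ by $\bQ_p(1)$ attached to $p$. This kills all of $D_{\cris}$ through the nilpotence of monodromy on $D_{\stab}$ (Lemma~\ref{lem: Dst monoidal}). Your variant avoids $D_{\stab}$, but both rest on the same duality computation, which your proposal lacks.

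A minor point: the $t$-structure on the fiber product is most simply defined componentwise, using that both legs are $t$-exact. Defining the aisle by the $V$-component alone needs a complex-level version of Lemma~\ref{lemma:heart-t-structure} that you only gesture at.
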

\begin{rem}
Our proof is specific to $K$ being finite over $\bQ_p$, as it uses Tate duality. We do not know whether this equivalence holds for an arbitrary $p$-adic field $K$ with a perfect residue field.
\end{rem}


Before proving Theorem~\ref{thm: de rham derived category pullback}, we state an immediate consequence that provides a derived analog of the standard definition of de Rham representations. Recall that a representation $V\in \Rep_{\QQ_p} G_K$ is de Rham if and only if there exists a filtered $K$-vector space $D$ and a $G_K$-equivariant filtered isomorphism $D\otimes_K \BB_\dR \simeq V\otimes_{\QQ_p} \BB_\dR$.

\begin{cor}\label{cor:de-rham} The derived $\infty$-category $\cD^b(\Rep^{\dR}_{\bQ_p}G_K)$ is equivalent, as a symmetric monoidal $\infty$-category, to the fiber product of the diagram of $\infty$-categories
\begin{equation*}
\begin{tikzcd}
\cD^b(\Rep_{\bQ_p}G_K)\arrow[rd, "{V\mapsto V\otimes_{\bQ_p}\BB_{\dR}}", swap] & & \cD_{\fil}(K)\arrow[ld, "{D\mapsto D\otimes_K \BB_{\dR}}"] \\
& \cD_{\fil}(G_K;\BB_{\dR})
\end{tikzcd}
\end{equation*}
\end{cor}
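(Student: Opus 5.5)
The plan is to deduce the corollary formally from Theorem~\ref{thm: de rham derived category pullback}, by identifying the cospan in the corollary with the one in (\ref{repdr pullback diagram}) through the equivalence $\cL\simeq\cD(G_K;\BB_{\dR}^+)$ of Lemma~\ref{lemma:filtered-bdr-bdr+}. Here $V\otimes_{\bQ_p}\BB_{\dR}$ carries the filtration $V\otimes t^n\BB_{\dR}^+$, that is, $V$ is given the trivial filtration of Notation~\ref{notation:trivial-filtration} and we tensor with the filtered algebra $\BB_{\dR}$.

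\textbf{Step 1: both functors land in $\cL$.} For the right-hand functor $D\mapsto D\otimes_K\BB_{\dR}$ this is exactly the second assertion of Lemma~\ref{lemma:filtered-bdr-bdr+}. For the left-hand functor, note that $V\otimes_{\bQ_p}\BB_{\dR}=(V\otimes_{\bQ_p}\BB_{\dR}^+)\otimes_{\BB_{\dR}^+}\BB_{\dR}$, with $\BB_{\dR}$ filtered. By the proof of Lemma~\ref{lemma:filtered-bdr-bdr+}, this is precisely the image of $V\otimes_{\bQ_p}\BB_{\dR}^+$ under the symmetric monoidal inverse $\cD(G_K;\BB_{\dR}^+)\to\cL$ given by scalar extension. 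Alternatively one checks directly that $\Fil^n=V\otimes t^n\BB_{\dR}^+=\Fil^0\otimes_{\BB_{\dR}^+}t^n\BB_{\dR}^+$.

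\textbf{Step 2: compare the cospans.} Both functors in the corollary factor as symmetric monoidal functors into $\cL$, followed by the fully faithful symmetric monoidal inclusion $\cL\hookrightarrow\cD_{\fil}(G_K;\BB_{\dR})$. Composing with the symmetric monoidal equivalence $\Fil^0\colon\cL\xrightarrow{\sim}\cD(G_K;\BB_{\dR}^+)$ recovers the two functors of (\ref{repdr pullback diagram}):
\begin{itemize}
\item on the left, $\Fil^0(V\otimes\BB_{\dR})=V\otimes\BB_{\dR}^+$;
\item on the right, this holds by definition (Construction~\ref{fil0Bdr construction}).
\end{itemize}
So the two diagrams are equivalent as cospans of symmetric monoidal $\infty$-categories.

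\textbf{Step 3: pass to fiber products.} For a fully faithful functor $j\colon\cL\hookrightarrow\cC$ and functors $F,G$ into $\cL$, the natural map $A\times_{\cL}B\to A\times_{\cC}B$ is an equivalence. The reason is that $\Map_{\cL}(jF a,jG b)\simeq\Map_{\cC}(jFa,jGb)$ and $\cL\to\cC$ is a monomorphism of $\infty$-categories, so the mapping spaces and the objects of the two pullbacks agree. This works in symmetric monoidal $\infty$-categories as well, since the forgetful functor to $\infty$-categories preserves and detects limits. Combining this with Theorem~\ref{thm: de rham derived category pullback} gives the claim.

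The main point requiring care is the identification in Step~1 of the left functor with the inverse equivalence as symmetric monoidal functors, rather than merely on objects. This follows from writing it as base change along the filtered algebra map $\BB_{\dR}^+\to\BB_{\dR}$ composed with $V\mapsto V\otimes\BB_{\dR}^+$, both of which are symmetric monoidal.
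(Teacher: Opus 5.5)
Your proposal is correct and is the same argument as the paper's, which proves the corollary in one line by combining Theorem~\ref{thm: de rham derived category pullback} with Lemma~\ref{lemma:filtered-bdr-bdr+}. You make explicit what the paper leaves implicit: both functors factor symmetric monoidally through $\cL$, the equivalence $\Fil^0\colon \cL \simeq \cD(G_K;\BB_{\dR}^+)$ identifies the two cospans, and the fiber product is unchanged by the fully faithful inclusion $\cL \hookrightarrow \cD_{\fil}(G_K;\BB_{\dR})$.
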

\begin{proof}
    This follows immediately from Theorem~\ref{thm: de rham derived category pullback} and Lemma~\ref{lemma:filtered-bdr-bdr+}.
\end{proof}

\begin{proof}[{Proof of Theorem~\ref{thm: de rham derived category pullback}}]
Denote by $\cD_K$ the fiber product of (\ref{repdr pullback diagram}), which is a symmetric monoidal stable $\infty$-category. We denote objects of $\cD_K$ by triples $(V,D, \alpha)$ with $V\in \cD^b(\Rep_{\bQ_p}G_K)$, $D\in\cD_{\fil}(K)$, and an equivalence $\alpha \colon\Fil^0(D\otimes_{K}\BB_{\dR}) \xrightarrow{\sim} V\otimes_{\bQ_p}\BB_{\dR}^+$. We have a functor $F\colon \Rep^{\dR}_{\bQ_p}G_K\to \cD_K$
sending a de Rham representation $W$ to the triple $(W, D_{\dR}(W), \alpha)$, where $\alpha \colon \Fil^0\bigl(D_\dR(W) \otimes_K \BB_\dR\bigr) \xrightarrow{\sim} W\otimes_{\QQ_p} \BB^+_\dR$ is the natural isomorphism. Since the functor $D_{\dR}$ is filtered exact and symmetric monoidal \cite[Proposition 1.5.2]{fontaine-semistables}, the functor $F$ takes short exact sequences of de Rham representations to fiber sequences in $\cD_K$ and induces a symmetric monoidal functor $\cD^b(\Rep^{\dR}_{\bQ_p}G_K)\to \cD_K$.

{\it Equivalence on the hearts.} The three categories $\cD^{b}(\Rep_{\bQ_p} G_K),\cD(G_K;\BB_{\dR}^+),\cD_{\fil}(K)$ appearing in (\ref{repdr pullback diagram}) are endowed with natural $t$-structures, using that they are the derived categories of the abelian categories $\Rep_{\bQ_p}G_K, \Mod(G_K;\BB_{\dR}^+), \Fun\bigl((\bZ, \leq)^{\op},\Vect_K\bigr)$, respectively. Both functors in (\ref{repdr pullback diagram}) are $t$-exact, hence these $t$-structures induce a $t$-structure on the category $\cD_K$. By definition, an object $(V, D, \alpha)\in \cD_K$ lies in its heart if $V$ and the objects $\Fil^nD$ for each $n\in\bZ$ are concentrated in degree zero. Lemma~\ref{lemma:heart-t-structure} implies that this is equivalent to just requiring that $V$ is concentrated in degree zero. Furthermore, in this case, all the transition maps $\Fil^{i+1} D \to \Fil^i D$ are injective. In other words, the heart of $\cD_K$ consists of triples $(V,D,\alpha)$ where $V$ is a Galois representation, $D$ is a filtered $K$-vector space, and $\alpha$ is a Galois-equivariant isomorphism $V\otimes_{\bQ_p}B_{\dR}^+\simeq \Fil^0(D\otimes B_{\dR})$. The existence of such an isomorphism implies that $V$ is de Rham, and $D$ together with $\alpha$ is uniquely recovered from $V$ as $D_{\dR}(V)$.
Thus, we conclude that the functor $\cD^b(\Rep^{\dR}_{\bQ_p}{G_K})\to \cD_K$ induces an equivalence of $\Rep^{\dR}_{\bQ_p}G_K$ with the heart of $\cD_K$.

{\it Full faithfulness of the functor $\cD^b(\Rep^{\dR}_{\bQ_p}G_K)\to \cD_K$.} It suffices to show that for any two de Rham representations $V_1,V_2\in \Rep^{\dR}_{\bQ_p}G_K$ the induced map 
\[
\RHom_{\Rep^{\dR}_{\bQ_p}G_K}\bigl(V_1,V_2\bigr)\to \RHom_{\cD_K}\Bigl(\bigl(V_1,D_{\dR}(V_1), \alpha_1\bigr), \bigl(V_2,D_{\dR}(V_2), \alpha_2\bigr)\Bigr)
\]
is an equivalence. Using the symmetric monoidal structure, it suffices to treat the case $V_1=\bQ_p$, and we denote $V_2$ by $V$ for brevity.

By definition, $\RGamma(\cD_K,V)\coloneqq \RHom_{\cD_K}\Bigl(\bigl(\bQ_p, K, \alpha_{\QQ_p}\bigr),\bigl(V,D_{\dR}(V), \alpha_V\bigr)\Bigr)$ fits into the cartesian diagram
\begin{equation*}
\begin{tikzcd}
\RGamma(\cD_K,V)\arrow[r]\arrow[d]\arrow[dr, phantom, "\lrcorner", very near start]  &\RHom_{\cD_{\fil}(K)}\bigl(K,D_{\dR}(V)\bigr)\arrow[d] \\
\RHom_{\cD(G_K;\bQ_p)}(\bQ_p, V)\arrow[r] & \RHom_{\cD(G_K;\BB_{\dR}^+)}\bigl(\BB_{\dR}^+, V\otimes_{\QQ_p} \BB_{\dR}^+\bigr),
\end{tikzcd}
\end{equation*}
where we have appealed to the full faithfulness from Lemma \ref{lem: repgk derived of its heart} to be able to compute the bottom left object as $\RHom$ in $\cD(G_K;\bQ_p)$. More explicitly, the corners of this cartesian diagram can be rewritten as

\begin{equation}\label{eqn:fiber-square}
\begin{tikzcd}
  \RGamma(\cD_K,V) \arrow[r] \arrow[d] \arrow[dr, phantom, "\lrcorner", very near start] & \Fil^0D_{\dR}(V) \arrow[d] \\
  \RGamma_{\cont}(G_K,V) \arrow[r] & \RGamma_{\cont}\bigl(G_K,V\otimes_{\QQ_p} \BB_{\dR}^+\bigr)
\end{tikzcd}
\end{equation}
where we used Lemma \ref{rhom in dfil} for the top right corner, and \cite[Lemma 4.3.9(1)]{proetale} to identify the bottom terms with continuous group cohomology complexes. The right vertical map identifies $\Fil^0D_{\dR}(V)$ with $\hH^0_{\cont}(G_K,V\otimes_{\QQ_p} \BB_{\dR}^+)$, by definition of the filtration on $D_{\dR}(V)$. Hence, the cohomology groups $\hH^i(\cD_K,V)\coloneqq \hH^i(\RGamma(\cD_K,V))$ fit into a long exact sequence

\begin{equation}\label{coh in ck long exact seq}
\begin{tikzcd}[column sep=1.5em, row sep=2.5em]
0 \arrow[r] 
  & \hH^0(\cD_K, V) \arrow[r] 
  & \hH^0_{\cont}(G_K, V) \arrow[r] 
  & 0 \arrow[dll, out=0, in=180, looseness=2, overlay] \\
  & \hH^1(\cD_K, V) \arrow[r] 
  & \hH^1_{\cont}(G_K, V) \arrow[r] 
  & \hH^1_{\cont}(G_K, V \otimes_{\QQ_p} \BB_{\dR}^+) \arrow[dll, out=0, in=180, looseness=2, overlay] \\
  & \hH^2(\cD_K, V) \arrow[r] 
  & \hH^2_{\cont}(G_K, V) \arrow[r] 
  & \hH^2_{\cont}(G_K, V \otimes_{\QQ_p} \BB_{\dR}^+) \arrow[r] 
  & \ldots
\end{tikzcd}
\end{equation}

Cohomology of $G_K$ with coefficients in $V\otimes \BB_{\dR}^+$ vanishes in degrees $\geq 2$ by \cite[Proposition 3.3(iii)]{fontaine-presque}, and cohomology with coefficients in $V$ vanishes in degrees $\geq 3$. Hence, the last displayed term, as well as all omitted terms in the sequence (\ref{coh in ck long exact seq}) are zero.

Since the map $\RHom_{\Rep^{\dR}_{\bQ_p}G_K}(\bQ_p, V)\to \RGamma(\cD_K,V)$ induces an isomorphism on $\hH^0$, \cite[Proposition~4.2]{Buchsbaum} implies that it suffices to check that for any $V\in \Rep^\dR_{\QQ_p} G_K$ and $u\in \hH^i(\cD_K, V)$, there is an injection $V \hookrightarrow V'$ in $\Rep^\dR_{\QQ_p}G_K$ such that the image of $u$ vanishes in $\hH^i(\cD_K, V')$.

For $i=1$, this amounts to the fact that 
\begin{multline*}
\hH^1(\cD_K,V)=\ker\bigl(\hH^1_\cont(G_K,V)\to \hH^1_\cont(G_K,V\otimes_{\QQ_p} \BB_{\dR}^+)\bigr)=\\= \ker\bigl(\hH^1_\cont(G_K,V)\to \hH^1_\cont(G_K,V\otimes_{\QQ_p} \BB_{\dR})\bigr)
\end{multline*}
classifies extensions $0\to V\to W\to\bQ_p\to 0$ in which $W$ is de Rham. The class $[W]\in \hH^1_\cont(G_K, V)$ then dies after the injection $V\hookrightarrow W$.

For $i=2$, the observations that diagram~(\ref{eqn:fiber-square}) is cartesian and that $\Fil^0D_\dR(V)$ is concentrated in degree $0$ imply that $\hH^2(\cD_K,V) \simeq \hH^1_\cont\bigl(G_K, V\otimes_{\QQ_p} (\BB_\dR^+/\QQ_p)\bigr)$. This functor is described as $\bigl(D_{\cris}(V^{\vee}(1))^{\varphi=1}\bigr)^{\vee}$ in Lemma \ref{lem: h2 in Ck description} below. We will prove that already the functor $V\mapsto D_{\cris}(V^{\vee}(1))^{\vee}$ is effaceable on $\Rep^{\dR}_{\bQ_p}G_K$. Dualizing, this is equivalent to showing that for any de Rham representation $V$ there exists a surjection of de Rham representations $W\twoheadrightarrow V$ such that the map $D_{\cris}(W)\to D_{\cris}(V)$ is zero. We denote by $K_0\subset K$ the maximal subfield unramified over $\bQ_p$.

\begin{lm}\label{lem: Dst monoidal}
If $V_1, V_2$ are two $\bQ_p$-representations of $G_K$ such that $V_2$ is semi-stable, then the natural map $D_{\stab}(V_1)\otimes_{K_0}D_{\stab}(V_2)\to D_{\stab}(V_1\otimes_{\bQ_p}V_2)$ is an isomorphism.
\end{lm}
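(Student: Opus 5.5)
Since $V_2$ is semi-stable, the natural map $\alpha_2\colon D_{\stab}(V_2)\otimes_{K_0}\BB_{\stab}\to V_2\otimes_{\bQ_p}\BB_{\stab}$ is a $G_K$-equivariant isomorphism compatible with $\varphi$ and $N$. I will use this isomorphism to rewrite $V_1\otimes V_2\otimes\BB_{\stab}$ so that the $G_K$-action sits entirely on the $V_1\otimes\BB_{\stab}$ factor, and then take $G_K$-invariants.

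First, tensor $\alpha_2$ with $V_1$ over $\bQ_p$. This gives a $G_K$-equivariant isomorphism
\[
(V_1\otimes_{\bQ_p}V_2)\otimes_{\bQ_p}\BB_{\stab}\simeq (V_1\otimes_{\bQ_p}\BB_{\stab})\otimes_{K_0}D_{\stab}(V_2),
\]
where $G_K$ acts trivially on $D_{\stab}(V_2)$. Second, take $G_K$-invariants of both sides. Since $D_{\stab}(V_2)$ is a finite-dimensional $K_0$-vector space with trivial action, choosing a $K_0$-basis $d_1,\dots,d_m$ identifies the right-hand side with $(V_1\otimes\BB_{\stab})^{\oplus m}$ as a $G_K$-module. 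Invariants therefore commute with $-\otimes_{K_0}D_{\stab}(V_2)$, and we obtain
\[
D_{\stab}(V_1\otimes V_2)\simeq D_{\stab}(V_1)\otimes_{K_0}D_{\stab}(V_2).
\]
Here one must note that $K_0=\BB_{\stab}^{G_K}$ is a field and that the $K_0$-structure on $V_1\otimes\BB_{\stab}$ is compatible with the $K_0$-structure on the tensor factor.

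Third, I check that the isomorphism so constructed is the inverse of the natural multiplication map $D_{\stab}(V_1)\otimes_{K_0}D_{\stab}(V_2)\to D_{\stab}(V_1\otimes V_2)$, $x\otimes y\mapsto xy$, with both sides viewed inside $V_1\otimes V_2\otimes\BB_{\stab}$. This is a matter of unwinding definitions: $\alpha_2$ is itself the multiplication map, and the identification above is obtained by applying multiplication in $\BB_{\stab}$ after $\alpha_2$.

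No step is a serious obstacle. The only point needing care is the third step: the isomorphism produced by the invariant computation must coincide with the natural map, not just give an abstract isomorphism of the two sides. If $\varphi$- and $N$-compatibility were also required, it would follow automatically, since $\alpha_2$ and the multiplication on $\BB_{\stab}$ respect both structures.
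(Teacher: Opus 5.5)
Your proposal is correct and follows essentially the same route as the paper: you use the semi-stability isomorphism for $V_2$ to move the Galois action onto $V_1\otimes_{\bQ_p}\BB_{\stab}$, and then pull the trivial factor $D_{\stab}(V_2)$ out of the invariants. The only difference is the last step. The paper uses this computation only to see that source and target have the same $K_0$-dimension, and then cites Fontaine's general injectivity of $D_{\stab}(V_1)\otimes_{K_0}D_{\stab}(V_2)\to D_{\stab}(V_1\otimes_{\bQ_p}V_2)$; you instead check directly that the composite isomorphism is the natural multiplication map (correctly: $x\otimes y$ with $x=\sum_a v_a\otimes b_a$ goes to $\sum_a v_a\otimes b_a y = xy$), which makes that citation unnecessary.
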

\begin{proof}
By definition of $D_{\stab}$, we have $D_{\stab}(V_1\otimes_{\bQ_p}V_2)=(V_1\otimes_{\bQ_p}V_2\otimes_{\bQ_p} \BB_{\stab})^{G_K}$, where the action of $G_K$ is the diagonal one on all three tensor factors. Using that $V_2$ is semi-stable, we identify $V_2\otimes_{\bQ_p} \BB_{\stab}$ with $D_{\stab}(V_2)\otimes_{K_0}\BB_{\stab}$ where the Galois action is on the second factor only. Hence we have
\[
\begin{aligned}
(V_1\otimes_{\bQ_p}V_2\otimes_{\bQ_p} \BB_{\stab})^{G_K} &\simeq (V_1\otimes_{\bQ_p}D_{\stab}(V_2)\otimes_{K_0}\BB_{\stab})^{G_K} \\
&\simeq (V_1\otimes_{\bQ_p}\BB_{\stab})^{G_K}\otimes_{K_0} D_{\stab}(V_2) \simeq D_{\stab}(V_1)\otimes_{K_0}D_{\stab}(V_2)
\end{aligned}
\]
In particular, the source and target of the natural map $D_{\stab}(V_1)\otimes_{K_0}D_{\stab}(V_2)\to D_{\stab}(V_1\otimes_{\bQ_p}V_2)$ have equal dimensions. But this map is injective for any pair of representations $V_1,V_2$ (\hspace{1sp}\cite[1.5]{fontaine-semistables}), hence it is an isomorphism in our case.
\end{proof}

Let $T$ be the $2$-dimensional representation fitting into a short exact sequence
\[
0\to\bQ_p(1)\to T\to\bQ_p\to 0
\]
whose class in $\hH^1_\cont(G_K,\bQ_p(1))$ is equal to the image of $p\in K^{\times}$ under the Kummer map. The representation $T$ is semi-stable with non-zero monodromy operator $N$ on $D_{\stab}(T)$. Let $r$ be an integer such that the $r$-th power $N^r_V$ of the monodromy operator $N_V$ on $D_{\stab}(V)$ vanishes. We claim that the surjection $W\coloneqq \Sym^r T\otimes_{\QQ_p} V\to V$ induces the zero map on $D_{\cris}$.

Lemma \ref{lem: Dst monoidal} implies that we have a sequence of isomorphisms
\[
D_{\stab}(W)=D_{\stab}(\Sym^r T\otimes_{\QQ_p} V) \simeq D_{\stab}(\Sym^r T)\otimes_{K_0}D_{\stab}(V)\simeq K_0[x]/x^{r+1}\otimes_{K_0} D_{\stab}(V),
\]
where the monodromy operator on $D_{\stab}(W) \simeq K_0[x]/x^{r+1}\otimes_{K_0} D_{\stab}(V)$ is given by $N_W=x\otimes 1+1\otimes N_V$ and the map $D_{\stab}(W)\simeq K_0[x]/x^{r+1}\otimes_{K_0} D_{\stab}(V) \to D_{\stab}(V)$ is equal to the evaluation at $x=0$. The space $D_{\cris}(W)$ is, by definition, the kernel of $N_W$ on $D_{\stab}(W)$ which can be identified with the space of elements of the form $\sum\limits_{i=0}^r x^i\otimes v_i$ for $v_i\in D_{\stab}(V)$ such that $v_i=-N_V(v_{i+1})$ for all $i=0, \dots, r-1$ and $N_V(v_0)=0$. In particular, the vector $v_0=(-1)^r N_V^r(v_r)$ vanishes by our assumption on the order of nilpotence of $N_V$. That is, the map $D_{\stab}(W)^{N_W=0}\to D_{\stab}(V)^{N_V=0}$ is zero, as desired.

This finishes the proof of effaceability of $\hH^2(\cD_K,V)$ and concludes the proof of full faithfulness. Together with the equivalence on the hearts, this proves the theorem, because every object of $\cD_K$ is a successive extension of shifts of objects from the heart.
\end{proof}

\begin{lm}\label{lem: h2 in Ck description}
For any finite-dimensional representation $V\in \Rep_{\bQ_p}G_K$ there is a natural isomorphism
\[
\hH^1_\cont\bigl(G_K,V\otimes_{\bQ_p} (\BB_{\dR}^+/\bQ_p)\bigr)\simeq \bigl(D_{\cris}\bigl(V^{\vee}(1)\bigr)^{\varphi=1}\bigr)^{\vee}.
\]
\end{lm}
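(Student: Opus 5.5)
The plan is to compute $\hH^1_\cont\bigl(G_K,V\otimes_{\bQ_p}(\BB_{\dR}^+/\bQ_p)\bigr)$ through the long exact sequence of the short exact sequence $0\to\bQ_p\to\BB_{\dR}^+\to\BB_{\dR}^+/\bQ_p\to 0$ tensored with $V$, and then to identify the resulting cokernel via local Tate duality and the Bloch--Kato description of exponential/dual exponential maps. Concretely, since $\hH^2_\cont(G_K,V\otimes\BB_{\dR}^+)=0$ by \cite[Proposition 3.3(iii)]{fontaine-presque}, we get an exact sequence
\[
\hH^1_\cont(G_K,V)\to \hH^1_\cont(G_K,V\otimes\BB_{\dR}^+)\to \hH^1_\cont\bigl(G_K,V\otimes(\BB_{\dR}^+/\bQ_p)\bigr)\to \hH^2_\cont(G_K,V)\to 0,
\]
and $\hH^2_\cont$ of $V\otimes(\BB_{\dR}^+/\bQ_p)$ vanishes as well. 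So the group in question is an extension of $\hH^2_\cont(G_K,V)$ by $\coker\bigl(\hH^1(G_K,V)\to\hH^1(G_K,V\otimes\BB_{\dR}^+)\bigr)$.

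A cleaner route is to use the fundamental exact sequence $0\to\bQ_p\to\BB_{\cris}^{\varphi=1}\to\BB_{\dR}/\BB_{\dR}^+\to 0$ and Bloch--Kato. In this setting, $\hH^1_g$ and $\hH^1_e$ are the relevant subspaces, and Tate duality pairs $\hH^1_e(V)$ with $\hH^1(V^\vee(1))/\hH^1_g(V^\vee(1))$, while $\hH^1_g/\hH^1_e$ is controlled by $D_\cris^{\varphi=1}$. The core of the argument is then the following identification. The image of $\hH^1(G_K,V)\to\hH^1(G_K,V\otimes\BB_{\dR}^+)$ has kernel $\hH^1_g(V)$ (at least after using injectivity of $\hH^1(V\otimes\BB_{\dR}^+)\to\hH^1(V\otimes\BB_{\dR})$, which follows from the vanishing of $\hH^0(V\otimes t^{-k}\BB_{\dR}^+/\BB_{\dR}^+)$ for large $k$ modulo a filtration argument; I would invoke the Bloch--Kato kernel description). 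Counting dimensions via the Euler characteristic formula $\dim\hH^1(V)=\dim\hH^0(V)+\dim\hH^2(V)+[K:\bQ_p]\dim V$, Bloch--Kato's formula $\dim\hH^1_g(V)=\dim\hH^0(V)+\dim D_\dR(V)/\Fil^0+\dim D_\cris(V^\vee(1))^{\varphi=1}$, and Tate duality $\hH^2(V)\simeq\hH^0(V^\vee(1))^\vee$, one expects the total dimension of our group to equal $\dim D_\cris(V^\vee(1))^{\varphi=1}$. Note that $\hH^1(V\otimes\BB_{\dR}^+)\simeq\hH^1(V\otimes\bC_p)$-type terms have dimension $\dim\Fil^0$-ish via Tate--Sen, making this count work.

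For a natural isomorphism rather than an equality of dimensions, I would construct the map directly by duality. Cup product with $V^\vee(1)\otimes\BB_{\cris}^{\varphi=1}$ gives a pairing
\[
\hH^1\bigl(V\otimes\BB_{\dR}^+/\bQ_p\bigr)\times\hH^0\bigl(V^\vee(1)\otimes\BB_\cris^{\varphi=1}\bigr)\to\hH^2(\bQ_p(1))\simeq\bQ_p.
\]
Here one uses the connecting map to $\hH^2(V)$ composed with Tate duality, together with a lift of the element through $\BB_\cris^{\varphi=1}\to\BB_\dR/\BB_\dR^+$ to handle the $\BB_{\dR}^+$-part. Alternatively and more robustly, one can apply Tate duality to the whole complex: $\RGamma(G_K,V\otimes\BB_\dR^+/\bQ_p)$ is the cone of $\RGamma(V)\to\RGamma(V\otimes\BB_\dR^+)$, dualize the first term by local Tate duality, and identify the dual of the map with $\RGamma(V^\vee(1))\to$ the Bloch--Kato ``dual exponential'' complex. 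The fiber of $\RGamma(V^\vee(1))\to\RGamma(V^\vee(1)\otimes\BB_{\cris}^{\varphi=1})$ then has top cohomology involving $D_\cris(V^\vee(1))^{\varphi=1}$ in degree $0$.

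The main obstacle is this duality identification: making the map natural and checking it is an isomorphism rather than just matching dimensions. If needed, I would fall back to proving naturality of the map first and then concluding isomorphism via the dimension count above plus injectivity.
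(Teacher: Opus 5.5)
\paragraph{Verdict.} There is a genuine gap: your proposal establishes an equality of dimensions (for de Rham $V$ only) but never constructs the natural isomorphism, and the missing non-degeneracy is the whole content of the lemma.

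\paragraph{What you have versus what is needed.} Your four-term exact sequence is correct. So is the dimension count when $V$ is de Rham: combine $\dim_{\bQ_p}\hH^1_{\cont}(G_K,V\otimes\BB^+_{\dR})=\dim_{\bQ_p}\Fil^0D_{\dR}(V)$, the Euler characteristic formula, Tate duality and Bloch--Kato's formula for $\dim\hH^1_g(V)$, and you do get $\dim D_{\cris}(V^\vee(1))^{\varphi=1}$. But the lemma asserts a \emph{natural} isomorphism, and naturality is what is used later. In the proof of Theorem~\ref{thm: de rham derived category pullback}, effaceability of $V\mapsto D_{\cris}(V^\vee(1))^\vee$ is transferred to $\hH^2(\cD_K,-)$ through this lemma, and an equality of dimensions is useless there.

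\paragraph{Where the construction is missing.} Your pairing is only gestured at. The fallback ``naturality, dimension count, plus injectivity'' leaves injectivity, i.e.\ non-degeneracy, unproved.

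The ``more robust'' route does not close the gap either. Set $W=V^\vee(1)$. Identifying the dual of $\hH^1_{\cont}(G_K,V)\to\hH^1_{\cont}(G_K,V\otimes\BB^+_{\dR})\simeq\Fil^0D_{\dR}(V)$ with $\exp_W$ is Kato's $\exp/\exp^*$ duality theorem. Even granting it, you only learn that $\hH^1_{\cont}(G_K,V\otimes\BB^+_{\dR}/\bQ_p)^\vee$ and $D_{\cris}(W)^{\varphi=1}$ are both extensions of $\ker\bigl(\exp_W\colon D_{\dR}(W)/\Fil^0\to\hH^1_{\cont}(G_K,W)\bigr)$ by $\hH^0_{\cont}(G_K,W)$. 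A natural map between them, compatible with both extensions, still has to be constructed.

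\paragraph{Restriction to de Rham $V$.} Every input you use ($\hH^1_g$, $\hH^1_e$, the Bloch--Kato dimension formulas, $\exp/\exp^*$ duality) requires $V$ de Rham. The lemma is stated for all $V$.

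\paragraph{A smaller error.} Injectivity of $\hH^1_{\cont}(G_K,V\otimes\BB^+_{\dR})\to\hH^1_{\cont}(G_K,V\otimes\BB_{\dR})$ cannot come from vanishing of $\hH^0_{\cont}(G_K,V\otimes t^{-k}\BB^+_{\dR}/\BB^+_{\dR})$. That vanishing already fails for $V=\bQ_p(1)$: the piece $V\otimes t^{-1}\BB^+_{\dR}/\BB^+_{\dR}\simeq\bC_p$ has invariants $K$. For de Rham $V$ the correct reason is that $D_{\dR}(V)\to\hH^0_{\cont}(G_K,V\otimes\BB_{\dR}/\BB^+_{\dR})$ is surjective.

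\paragraph{How the paper gets the perfect pairing.} The paper obtains perfectness from a duality theorem that is natural from the outset.
\begin{enumerate}
\item Since $\hH^1_{\cont}(G_K,V\otimes t^n\BB^+_{\dR})=0$ for $n$ large (Sen theory, valid for every $V$), one may replace $\BB^+_{\dR}/\bQ_p$ by the almost $\bC_p$-representation $\BB^+_{\dR,n}/\bQ_p$.
\item Fontaine's Tate duality in $\cC(G_K)$ \cite[Proposition 6.8(ii)]{fontaine-presque} gives a perfect pairing with $\Ext^1_{\cC(G_K)}\bigl(\BB^+_{\dR,n}/\bQ_p,V^\vee(1)\bigr)$.
\item This $\Ext^1$ is computed on the Fargues--Fontaine curve as $D_{\cris}(V^\vee(1))^{\varphi=1}$ (Lemma~\ref{lemma:formula-for-ext1}).
\end{enumerate}

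\paragraph{Salvaging your route.} To stay within classical Bloch--Kato theory, you would have to restrict to de Rham $V$, which suffices for the application. You would then need to supply both Kato's duality and an explicit natural comparison map, checking it against both extension sequences. As written, neither is present.
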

\begin{proof}
This is a special case of Tate duality for Galois cohomology with coefficients in Banach--Colmez spaces, proved in \cite[Th\'eor\`eme 6.1]{fontaine-presque}.

First, \cite[Th\'eor\`eme 2.14(iii)]{fontaine-presque} ensures that $\hH^1_\cont(G_K,V\otimes_{\bQ_p} t^n\BB_{\dR}^+)=0$ for any sufficiently large $n$ (it suffices to take $n$ to be larger than all {integral} generalized Hodge--Tate weights of $V$). The natural map $\hH^1_\cont\bigl(G_K,V\otimes_{\bQ_p} (\BB_{\dR}^+/\bQ_p)\bigr)\to \hH^1_\cont\bigl(G_K, V\otimes_{\bQ_p} (\BB_{\dR,n}^+/\bQ_p)\bigr)$ is then an isomorphism, where $\BB_{\dR,n}^+$ denotes $\BB_{\dR}^+/t^n$. 

Then \cite[Proposition 6.8(ii)]{fontaine-presque} implies that we have a perfect pairing of finite-dimensional $\bQ_p$-vector spaces
\begin{equation*}
\hH^1_\cont\bigl(G_K,V\otimes_{\bQ_p} (\BB_{\dR,n}^+/\bQ_p)\bigr)\otimes_{\bQ_p}\Ext^1_{\cC(G_K)}\bigl(\BB_{\dR,n}^+/\bQ_p, V^{\vee}(1)\bigr)\to \hH^2_\cont\bigl(G_K,\bQ_p(1)\bigr)\simeq\bQ_p
\end{equation*}
where $\Ext^1$ is taken in Fontaine's category of almost $\bC_p$-representations of $G_K$ (see \cite[\textsection 1, 1.2]{fontaine-almost}). Therefore, the result follows from Lemma~\ref{lemma:formula-for-ext1} applied to $W=V^{\vee}(1)$ and a sufficiently large $n$. 
\end{proof}

\begin{lemma}\label{lemma:formula-for-ext1} Let $W\in \Rep_{\QQ_p} G_K$. Then there is an integer $n_0$ such that, for every $n\geq n_0$, there is an isomorphism $\Ext^1_{\cC(G_K)}(\BB_{\dR,n}^+/\bQ_p, W) \simeq D_\cris(W)^{\varphi=1}$.    
\end{lemma}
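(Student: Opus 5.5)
The plan is to compute $\Ext^1_{\cC(G_K)}(\BB_{\dR,n}^+/\bQ_p, W)$ by applying $\Hom_{\cC(G_K)}(-,W)$ to the short exact sequence
\[
0\to \bQ_p\to \BB_{\dR,n}^+\to \BB_{\dR,n}^+/\bQ_p\to 0
\]
in Fontaine's abelian category $\cC(G_K)$ of almost $\bC_p$-representations, and to read off the answer from the resulting long exact sequence. This gives
\[
\Hom(\BB_{\dR,n}^+,W)\to \Hom(\bQ_p,W)\to \Ext^1(\BB_{\dR,n}^+/\bQ_p,W)\to \Ext^1(\BB_{\dR,n}^+,W)\to \Ext^1(\bQ_p,W).
\]
The first term should vanish, since $\BB_{\dR,n}^+$ has no nonzero maps to a finite-dimensional $\bQ_p$-representation: such a map would factor through a quotient of $\bC_p$-dimension zero, which is impossible by the dimension theory of \cite{fontaine-almost}. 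The second term is $W^{G_K}$, and the last term is $\hH^1_{\cont}(G_K,W)$, because $\Ext^1$ between finite-dimensional representations computed in $\cC(G_K)$ agrees with Galois cohomology (\cite{fontaine-almost}; compare \cite[Proposition 1.2.6]{Emerton-Kisin}).

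The key input is a description of $\Ext^1_{\cC(G_K)}(\BB_{\dR,n}^+,W)$. I would use Fontaine's duality results, specifically the computation of $\Ext^i(\BB_{\dR,n}^+, -)$ in \cite[\S 6]{fontaine-presque}. For $n$ large relative to the Hodge--Tate weights of $W$, I expect
\[
\Ext^1(\BB_{\dR,n}^+,W)\simeq \bigl(\hH^1_\cont(G_K, W^\vee(1)\otimes\BB_{\dR,n}^+)\bigr)^\vee\simeq \bigl(\hH^0_\cont(G_K, W^\vee(1)\otimes \BB_{\dR,n}^+)\bigr)^\vee,
\]
where the first isomorphism is the Tate-type duality and the second is the usual $\hH^0\cong\hH^1$ for $\BB_{\dR}^+$-coefficients. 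This in turn should be dual to $D_\dR(W)/\Fil^0$. Under this identification, the map to $\hH^1_\cont(G_K,W)$ is the dual of the boundary map, and its kernel is $\hH^1_e$-type data. Concretely, the map $\Ext^1(\BB^+_{\dR,n},W)\to\hH^1(G_K,W)$ should be the Bloch--Kato exponential $D_\dR(W)/\Fil^0\to \hH^1(G_K,W)$.

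The final step is to combine this with the Bloch--Kato fundamental exact sequence
\[
0\to W^{G_K}\to D_\cris(W)^{\varphi=1}\to D_\dR(W)/\Fil^0 D_\dR(W)\xrightarrow{\exp}\hH^1(G_K,W),
\]
which comes from $0\to\bQ_p\to \BB_\cris^{\varphi=1}\to \BB_\dR/\BB_\dR^+\to 0$. This sequence identifies $\Ext^1(\BB_{\dR,n}^+/\bQ_p,W)$ as an extension of $\ker(\exp)$ by $W^{G_K}$, and the middle term of such an extension is precisely $D_\cris(W)^{\varphi=1}$. To make this identification natural rather than merely dimension-wise, I would instead construct the map directly. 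An element $x\in (\BB_\cris^{\varphi=1}\otimes W)^{G_K}$ gives a pushout extension of $\BB_{\dR,n}^+/\bQ_p$ by $W$ via the "fundamental" extension $0\to\bQ_p\to U\to \BB_{\dR,n}^+/\bQ_p$-type sequence built from $\BB_\cris^{+,\varphi=1}$ truncated, where one uses $\BB_\cris^{+,\varphi=p^n}\to\BB_{\dR,n}^+$ after twisting. I would then check compatibility with the long exact sequence and conclude by the five lemma.

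The main obstacle is this middle step: identifying $\Ext^1(\BB^+_{\dR,n},W)$ and its map to $\hH^1$ with the exponential. I would first try to cite \cite[Th\'eor\`eme 6.1, Proposition 6.8]{fontaine-presque} directly, and fall back on the dimension count plus naturality if needed.
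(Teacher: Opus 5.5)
There is a genuine gap. Your long exact sequence is set up correctly. Granting $\Hom_{\cC(G_K)}(\BB^+_{\dR,n},W)=0$, it reduces the lemma to
\[
0\to W^{G_K}\to \Ext^1_{\cC(G_K)}(\BB^+_{\dR,n}/\bQ_p,W)\to \ker\bigl(\Ext^1_{\cC(G_K)}(\BB^+_{\dR,n},W)\xrightarrow{\ \alpha\ }\hH^1_\cont(G_K,W)\bigr)\to 0 .
\]
That vanishing is true, but it is better justified by duality, via $\Hom(\BB^+_{\dR,n},W)^\vee\simeq\hH^2_\cont(G_K,W^\vee(1)\otimes\BB^+_{\dR,n})=0$. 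The dimension function alone does not forbid a quotient of $\bC_p$-dimension $0$.

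The real problem is that everything rests on the step you leave open: identifying $\alpha$ with the Bloch--Kato exponential. Your duality chain $\Ext^1(\BB^+_{\dR,n},W)\simeq\hH^0_\cont(G_K,W^\vee(1)\otimes\BB^+_{\dR,n})^\vee$ computes the source of $\alpha$ only as a vector space. It says nothing about the rank of $\alpha$. Without that rank you cannot match $\ker\alpha$ with $\ker(\exp)$. So ``dimension count plus naturality'' does not even give an abstract isomorphism, let alone the natural one used later: Lemma~\ref{lem: h2 in Ck description} and the effaceability step in Theorem~\ref{thm: de rham derived category pullback} need naturality in $W$.

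To push your route through, you would have to do two more things. First, dualize $\alpha$ itself under Fontaine's pairing, to $\hH^1_\cont(G_K,W^\vee(1))\to\hH^1_\cont(G_K,W^\vee(1)\otimes\BB^+_{\dR,n})$, and invoke Bloch--Kato orthogonality $\hH^1_e(W)^\perp=\hH^1_g(W^\vee(1))$ (available for de Rham $W$); this is substantial extra input. Second, actually construct the comparison map; your pushout sketch only gestures at it.

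Note also that the lemma is for arbitrary $W$. The term following $D_\cris(W)^{\varphi=1}$ in the fundamental exact sequence is $(W\otimes\BB_\dR/\BB_\dR^+)^{G_K}$, not $D_\dR(W)/\Fil^0D_\dR(W)$. These differ for non-de Rham $W$: for a non-split extension of $\bQ_p(1)$ by $\bQ_p$, the first is $K$ and the second is $0$, and likewise $\Fil^0D_\dR(W^\vee(1))\neq0$. So your intermediate identifications are only valid in the de Rham case.

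The paper avoids all of this using Fontaine's equivalence $D^b(\cC(G_K))\simeq D^b(\Coh^{G_K}(\FF_{\bC_p}))$, under which $(\BB^+_{\dR,n}/\bQ_p)(-n)_T$ corresponds to $\cO(-n)_{FF}[1]$. After twisting, the $\Ext^1$ becomes
\[
\Hom(\cO(-n)_{FF},W(-n)_T\otimes_{\bQ_p}\cO)^{G_K}=\bigl(W(-n)_T\otimes_{\bQ_p}\BB_\cris^{+,\varphi=p^n}\bigr)^{G_K},
\]
and multiplication by $t^{-n}$ identifies this with $D_\cris(W)^{\varphi=1}$ for $n\gg0$. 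This is natural in $W$, needs no de Rham hypothesis, and involves no exponential or orthogonality. Your pushout idea is essentially pushing out the (suitably twisted) sequence $0\to\cO(-n)_{FF}\to\cO\to i_{\infty*}\BB^+_{\dR,n}\to0$ along such a homomorphism; the derived equivalence is what proves the resulting map is bijective.
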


The proof below involves formulas containing both Tate twists by $\QQ_p(1)$ and twists by the line bundle $\cO(1)$ on the Fargues--Fontaine curve. In order to avoid any confusion, we write $(n)_T$ for Tate twists and $(n)_{FF}$ for twists by $\cO(n)$.

\begin{proof}
To calculate this $\Ext^1$ group, we use the equivalence 
\begin{equation}\label{eqn:derived-almost-representations}
D^b\bigl(\cC(G_K)\bigr)\simeq D^b\bigl(\Coh^{G_K}(\FF_{\bC_p})\bigr)
\end{equation}
of the derived category of $\cC(G_K)$ with the derived category of (continuously) Galois-equivariant coherent sheaves on the Fargues--Fontaine curve provided by \cite[Theorem 6.4]{fontaine-almost}. Recall that for any integer $n$ we have a $G_K$-equivariant line bundle $\cO(n)_{{FF}}$ on $\FF_{\bC_p}$ associated to the $\varphi$-module $(K_0,p^{-n}\cdot\varphi_{K_0})$ over $K_0$. We have Galois-equivariant isomorphisms $\hH^0(\FF_{\bC_p},\cO(n)_{FF})\simeq \BB_{\cris}^{+,\varphi=p^n}$ and $\hH^1(\FF_{\bC_p},\cO(-n)_{FF})\simeq (\BB_{\dR,n}^+/\bQ_p)(-n)_T$ for $n>0$. 

As explained in the proof of \cite[Proposition 6.8]{fontaine-almost} (see also \cite[Theorem 4.23]{Li-duality}), the equivalence (\ref{eqn:derived-almost-representations}) carries the object $\cO(-n)_{FF}\in \Coh^{G_K}(\FF_{\bC_p})$ to $\hH^1(\FF_{\bC_p},\cO(-n)_{FF})[-1]$ for all $n>0$, hence we obtain an isomorphism
\begin{align*}
\Ext^1_{\cC(G_K)}(\BB_{\dR,n}^+/\bQ_p, W) & \simeq \Ext^1_{\cC(G_K)}((\BB_{\dR,n}^+/\bQ_p)(-n)_{T},W(-n)_{T}) \\
& \simeq \Ext^1_{D^b(\Coh^{G_K}(\FF_{\bC_p}))}(\cO(-n)_{FF}[1],W(-n)_{T}\otimes_{\bQ_p}\cO)
\end{align*}
The final $\Ext^1$ is simply the space $\Hom_{\FF_{\bC_p}}(\cO(-n)_{FF},W(-n)_{T}\otimes_{\QQ_p}\cO)^{G_K}$ of Galois-equivariant homomorphisms of vector bundles on $\FF_{\bC_p}$, which equals to the space of Galois-invariant global sections
\[
\hH^0(\FF_{\bC_p},W(-n)_{T}\otimes_{\bQ_p}\cO(n)_{FF})^{G_K}\simeq (\BB_{\cris}^{+,\varphi=p^n}\otimes_{\QQ_p} W(-n)_{T})^{G_K}.
\]
Since $(W\otimes_{\QQ_p} B_\cris^{\varphi=1})^{G_K}$ is finite-dimensional and $B_\cris=B_\cris^+[1/t]$, one sees that the map $\bigl(W\otimes_{\QQ_p} \BB_{\cris}^{+,\varphi=p^n}(-n)_T\bigr)^{G_K}\xrightarrow{\id\otimes t^{-n}} \bigl(W\otimes_{\QQ_p} \BB_{\cris}^{\varphi=1}\bigr)^{G_K}$ is an isomorphism for large enough $n$. Combining the results, we conclude that 
\[
\Ext^1_{\cC(G_K)}(\BB_{\dR,n}^+/\bQ_p, W) \simeq \bigl(W\otimes_{\QQ_p} \BB_{\cris}^{+,\varphi=p^n}(-n)_T\bigr)^{G_K} \simeq \bigl(W\otimes_{\QQ_p} \BB_{\cris}^{\varphi=1}\bigr)^{G_K} \simeq D_\cris(W)^{\varphi=1}
\]
for sufficiently large $n$. 
\end{proof}

\subsection{\texorpdfstring{$p$}{}-adic \'etale cohomology as an object of \texorpdfstring{$\cD^{b}(\Rep_{\bQ_p}^{\dR}G_K)$}{}.} In this subsection we apply the description of $\cD^{b}(\Rep_{\bQ_p}^{\dR}G_K)$ provided by Theorem \ref{thm: de rham derived category pullback} to upgrade $p$-adic \'etale cohomology $\RGamma_{\et}(X_{\bC_p},\bQ_p)$ of any smooth proper rigid-analytic space over $K$ to an object of this category. 

\begin{pr}\label{prop: qp etale cohomology complex is de rham}
Let $X$ be any smooth proper rigid-analytic space over $K$. Then its $p$-adic \'etale cohomology algebra $\RGamma_{\et}(X_{\bC_p},\bQ_p)\in \cD(G_K;\bQ_p)$ naturally lifts to an $E_{\infty}$-algebra in the category $\cD^b(\Rep^{\dR}_{\bQ_p}G_K)$. 
\end{pr}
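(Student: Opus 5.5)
The plan is to use Theorem~\ref{thm: de rham derived category pullback}: an $E_\infty$-algebra in $\cD^b(\Rep^{\dR}_{\bQ_p}G_K)$ is the same as an $E_\infty$-algebra in the fiber product $\cD_K$. Since commutative algebra objects in a fiber product of symmetric monoidal $\infty$-categories (along symmetric monoidal functors) form the fiber product of the corresponding $\infty$-categories of commutative algebras, it suffices to produce three pieces of data:
\begin{enumerate}[label=(\roman*)]
\item an $E_\infty$-algebra $V$ in $\cD^b(\Rep_{\bQ_p}G_K)$;
\item a filtered $E_\infty$-algebra $D$ in $\cD_\fil(K)$;
\item an equivalence of $E_\infty$-algebras $\Fil^0(D\otimes_K\BB_\dR)\simeq V\otimes_{\bQ_p}\BB_\dR^+$ in $\cD(G_K;\BB_\dR^+)$.
\end{enumerate}

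For (i), we take $\RGamma_\et(X_{\bC_p},\bQ_p)$, computed as pro\'etale cohomology of $X$ pushed to $BG_{K,\proet}$. Its $E_\infty$-structure comes from lax monoidality of pushforward. By finiteness of $p$-adic cohomology of smooth proper rigid spaces (Scholze), each $\hH^i$ is a finite-dimensional continuous representation, and these vanish for $i>2\dim X$. So Lemma~\ref{lem: repgk derived of its heart} places it, together with its algebra structure (the embedding is fully faithful and symmetric monoidal, with essential image closed under tensor products), in $\cD^b(\Rep_{\bQ_p}G_K)$.

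For (ii), we take $\RGamma_\dR(X/K)$ with its Hodge filtration $\Fil^n=\RGamma(X,\Omega^{\geq n}_X)$. This is naturally a commutative algebra in $\cD_\fil(K)$ via the wedge product, which is compatible with the stupid filtration.

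For (iii), which is the key step, we use the de Rham comparison theorem in its sheaf-theoretic form. On $X_{\proet}$ there are period sheaves $\mathbb{B}_\dR^+$ and $\mathcal{O}\mathbb{B}_\dR^+$, with filtrations. The filtered Poincar\'e lemma gives a filtered quasi-isomorphism $\mathbb{B}_\dR\simeq \mathcal{O}\mathbb{B}_\dR\otimes\Omega^\bullet_X$, and this quasi-isomorphism is multiplicative. Taking $\Fil^0$ and cohomology, and using the primitive comparison theorem $\RGamma(X_{\bC_p},\bQ_p)\otimes\BB_\dR^+\simeq\RGamma(X_{\bC_p},\mathbb{B}_\dR^+)$, yields maps of $E_\infty$-algebras in $\cD(G_K;\BB_\dR^+)$:
\[
V\otimes_{\bQ_p}\BB_\dR^+\to \RGamma\bigl(X_{\bC_p},\mathbb{B}^+_\dR\bigr)\leftarrow \Fil^0\bigl(\RGamma_\dR(X/K)\otimes_K\BB_\dR\bigr).
\]
Here the right-hand map comes from the filtered identification $\RGamma(X_{\bC_p},\mathcal{O}\mathbb{B}_\dR\otimes\Omega^\bullet)\simeq \RGamma_\dR(X/K)\otimes_K\BB_\dR$. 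That identification is a filtered projection formula, which holds because both sides are in $\cL$ of Lemma~\ref{lemma:filtered-bdr-bdr+}, and it can be checked on graded pieces, where it reduces to $\RGamma(X_{\bC_p},\widehat{\cO}(j))$ and the Hodge--Tate decomposition. Each displayed map is an equivalence by Scholze's comparison theorems.

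I expect the main obstacle to be making this comparison a map of $E_\infty$-algebras, natural in $X$, rather than an isomorphism of cohomology groups. Concretely, one has to exhibit the relevant maps as arising from lax symmetric monoidal functors: pushforward from $X_\proet$ to $BG_{K,\proet}$, $\Fil^0$, and base change along $K\to\BB_\dR$, all applied to a multiplicative filtered quasi-isomorphism of sheaves of filtered commutative algebras. One also has to verify that the projection formula map is symmetric monoidal.
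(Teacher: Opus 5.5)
Your proposal is correct and essentially matches the paper's proof. It lands in $\cD^b(\Rep_{\bQ_p}G_K)$ via Lemma~\ref{lem: repgk derived of its heart}, takes $D=\RGamma_\dR(X/K)$ with its Hodge filtration, and gets the $E_\infty$-equivalence in $\cD(G_K;\BB_\dR^+)$ from Scholze's multiplicative comparison maps through $\RGamma_\proet(X_{\bC_p},\bB_\dR)$ and $\RGamma_\proet(X_{\bC_p},\cO\bB_\dR\otimes\Omega^\bullet)$; the only difference is that the paper applies $\Fil^0$ to the filtered $\BB_\dR$-comparison, while you work with $\bB_\dR^+$ and $\Fil^0$ of the Poincar\'e lemma directly, which is immaterial. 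The one step you compress is that each condensed $G_K$-module $\hH^i_\et(X_{\bC_p},\bQ_p)$ is $\underline{V}$ for a continuous finite-dimensional $V$, which needs more than finiteness of the underlying vector space; the paper cites \cite[Proposition 4.9]{bosco} for this.
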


\begin{rem}\label{rem: wiesia knows it}
We expect that {Proposition \ref{prop: qp etale cohomology complex is de rham} can also be deduced from the results of \cite{Deglise-Niziol}. When $X$ arises as the analytification of an algebraic variety, this result follows from \cite[Lemma 2.22]{Deglise-Niziol}, which produces the requisite object of $\cD^b(\Rep_{\bQ_p}^{\dR}G_K)$ by applying the functor $V_{\pst}$ to an object of the derived category of admissible filtered $(\varphi,N,G_K)$-modules constructed out of Hyodo--Kato cohomology. For a general $X$, the same strategy {should give} a proof, using the construction of Hyodo--Kato cohomology provided in \cite[Theorem 1.1]{Colmez-Niziol-I}.}
\end{rem}

First, we recall how the complex of $\bQ_p$-vector spaces $\RGamma_{\et}(X_{\bC_p},\bQ_p)$ is endowed with a structure of an object of $\cD(G_K;\bQ_p)$. Denote by $f\colon X_{\qproet}\to (\Spa K)_{\qproet}$ the structure map of quasi-pro\'etale sites, as defined in \cite[Definition~14.1]{diamonds}. We have an object $\rR f_*\bQ_p\in \cD((\Spa K)_{\qproet};\bQ_p)$ whose value on $\Spa C$ is the \'etale cohomology complex $\RGamma_{\et}(X_{\bC_p},\bQ_p)$, by the comparison between quasi-pro\'etale and \'etale cohomology for quasi-compact rigid-analytic spaces. Moreover, the complex $\rR f_*\bQ_p$ has a natural structure of an $E_{\infty}$-algebra in the symmetric monoidal category $\cD((\Spa K)_{\qproet};\bQ_p)$ obtained by applying the lax symmetric monoidal functor $\rR f_*$ to the commutative algebra $\bQ_p\in \cD(X_{\qproet},\bQ_p)$.

To obtain an object of $\cD(G_K;\bQ_p)$ from $\rR f_*\bQ_p$, we identify the quasi-pro\'etale topos of $\Spa K$ with the topos of $G_K$--profinite sets $BG_{K,\proet}$ (see \cite[Definition 4.3.1]{proetale}). As before, we denote its underlying category by $G_K\text{--pfsets} \simeq \Pro(G_K\text{--fsets})$.

\begin{construction} There is a natural functor $G_K\text{--pfsets} \to (\Spa K)_\qproet$ which sends a profinite set $S$ with a continuous action of $G_K$ to the quasi-pro\'etale morphism $(\Spa \bC_p \times S)/\underline{G_K} \to \Spa K$, where the action of $\ud{G_K}$ on $\Spa \bC_p \times S$ is diagonal. This functor sends covers to covers and commutes with pullbacks, so \cite[\href{https://stacks.math.columbia.edu/tag/00X6}{Tag 00X6}]{stacks-project} ensures that this defines a morphism of sites
\[
\pi \colon (\Spa K)_\qproet \to BG_{K,\proet}.
\]
\end{construction}

\begin{lemma}\label{lemma:quasiproetale-G-sets} The morphism of sites $\pi \colon (\Spa K)_\qproet \to BG_{K,\proet}$ induces an equivalence of topoi $\pi_* \colon \Shv\bigl((\Spa K)_\qproet; \Set\bigr) \to \Shv\bigl(BG_{K,\proet}; \Set\bigr)$. 
\end{lemma}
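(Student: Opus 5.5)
To prove Lemma~\ref{lemma:quasiproetale-G-sets}, I would exhibit $BG_{K,\proet}$ as a full subcategory of $(\Spa K)_\qproet$ that generates it by covers, and then apply the comparison lemma for sites. The functor $u\colon S\mapsto (\Spa \bC_p\times S)/\ud{G_K}$ underlying $\pi$ commutes with finite limits and sends covers to covers. So the plan is to verify three things: (a) $u$ is fully faithful; (b) every object of $(\Spa K)_\qproet$ admits a cover by objects in the essential image of $u$; (c) a family of maps in $G_K\text{--pfsets}$ is a cover if and only if its image under $u$ is a quasi-pro\'etale cover. With these, the comparison lemma (SGA4 III.4.1, or \cite[\href{https://stacks.math.columbia.edu/tag/03A0}{Tag 03A0}]{stacks-project}) shows that restriction along $u$, which is $\pi_*$, is an equivalence of topoi.

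For (a) and (c), I would use that $\Spa \bC_p\to \Spa K$ is a $\ud{G_K}$-torsor in the quasi-pro\'etale (indeed pro-\'etale) topology. Pulling back along it identifies $u(S)\times_{\Spa K}\Spa\bC_p\simeq \Spa\bC_p\times \ud{S}$, with the diagonal $G_K$-action. Morphisms $u(S)\to u(S')$ over $\Spa K$ then correspond to $G_K$-equivariant morphisms $\Spa\bC_p\times\ud S\to \Spa\bC_p\times \ud{S'}$ over $\Spa\bC_p$, which is the same as $G_K$-equivariant maps $\ud S\to\ud{S'}$ of sheaves on $\mathrm{Perf}_{\bC_p}$. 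These are continuous equivariant maps $S\to S'$, because $\pi_0$ of $\Spa\bC_p\times \ud S$ is $S$ (equivalently, $\ud{S}(\Spa C)=S$ as a topological space). Surjectivity can be checked after this pullback on geometric points, and a map of profinite sets is surjective iff $\Spa\bC_p\times\ud S\to\Spa\bC_p\times\ud{S'}$ is surjective. So covers correspond.

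For (b), which I expect to be the main obstacle, let $Y\to\Spa K$ be quasi-pro\'etale. I would first pull back along the torsor $\Spa\bC_p\to\Spa K$. A quasi-pro\'etale $Y'$ over the strictly totally disconnected space $\Spa\bC_p$ is covered by pro-\'etale affinoids $\Spa\bC_p\times\ud T$ with $T$ profinite, since over a geometric point every \'etale map splits. To descend back to $\Spa K$, I would note that $Y$ is covered by $Y\times_{\Spa K}\Spa\bC_p$, which is quasi-pro\'etale over $\Spa\bC_p$ and hence covered by objects $\Spa\bC_p\times\ud T$. Each such object equals $u(G_K\times T)$ with $G_K$ acting on the left factor, because $(\Spa\bC_p\times G_K\times T)/\ud{G_K}\simeq \Spa\bC_p\times\ud T$. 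This gives a cover of $Y$ by objects from the image of $u$, with maps to $Y$ that need not come from $u$, which is allowed in the comparison lemma. The subtle points here are that quasi-pro\'etale maps are only locally separated, so one has to work locally, and that one must check the relevant cover of $Y\times\Spa\bC_p$ can be chosen inside $(\Spa K)_\qproet$, where $\Spa\bC_p\to\Spa K$ itself is quasi-pro\'etale. Finally, identifying $\pi_*$ with restriction along $u$ completes the argument.
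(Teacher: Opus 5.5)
Your proposal is correct, but it reaches the conclusion by a different route than the paper.

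\textbf{The paper's route.} The paper does not verify the comparison-lemma hypotheses by hand. It observes $(\Spa K)_{\fet}\simeq(\Spa K)_{\et,\qc,\sep}$ and cites \cite[Propositions 11.23 and 11.24]{diamonds} for two facts: $\Pro((\Spa K)_{\fet})\to(\Spa K)_\qproet$ is fully faithful, and its image is a basis of the topology. It then uses Galois theory, $(\Spa K)_{\fet}\simeq G_K\text{--fsets}$, to identify this basis with $\Pro(G_K\text{--fsets})\simeq BG_{K,\proet}$. It ends by asserting that the resulting equivalence is ``readily seen'' to be induced by $\pi$.

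\textbf{Your route.} You trivialize everything along the $\ud{G_K}$-torsor $\Spa\bC_p\to\Spa K$.
\begin{enumerate}
\item Full faithfulness of $u$ comes from v-descent, the identification $u(S)\times_{\Spa K}\Spa\bC_p\simeq\Spa\bC_p\times\ud S$ with diagonal action, and $|\Spa\bC_p\times\ud S|=S$.
\item Compatibility of covers is a joint-surjectivity check on $|u(S)|=S/G_K$.
\item The covering property comes from the explicit local structure $\Spa\bC_p\times\ud T$ of perfectoid spaces pro-\'etale over an algebraically closed field. As a result, only the free objects $u(G_K\times T)\simeq\Spa\bC_p\times\ud T$ are needed to cover.
\end{enumerate}

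\textbf{Comparison.} Your argument is more self-contained. It also makes explicit that the specific functor underlying $\pi$ realizes the equivalence, which is the step the paper leaves as ``readily seen.'' The cost is that you must check by hand several routine points: the quasi-compactness condition in the definition of quasi-pro\'etale covers, local separatedness of the covering objects, and that the covering family from $Y\times_{\Spa K}\Spa\bC_p$ lives in $(\Spa K)_\qproet$. The paper's route is shorter because Scholze's propositions package exactly these verifications, and they do so for an arbitrary spatial diamond base rather than only for a point.
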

\begin{proof}
    We note that $(\Spa K)_{\fet} \simeq (\Spa K)_{\et, \qc, \sep}$. Therefore, \cite[Proposition 11.23 and 11.24]{diamonds} imply that the natural functor 
    \[
    \Pro\bigl((\Spa K)_{\fet}\bigr) \simeq \Pro\bigl((\Spa K)_{\et, \qc, \sep}\bigr) \hookrightarrow (\Spa K)_\qproet
    \]
    is fully faithful and defines a basis for the quasi-pro\'etale topology on $\Spa K$. Therefore it induces an equivalence of topoi $\Shv\bigl(\Pro(\Spa K)_{\fet};\Set\bigr)\to \Shv\bigl((\Spa K)_{\qproet}; \Set\bigr)$.
    
    By Galois theory, the category $(\Spa K)_{\fet}$ is equivalent to the category $G_K\text{--fsets}$, hence the site $\Pro((\Spa K)_{\fet})$ is equivalent to $\Pro(G_K\text{--fsets})\simeq BG_{K,\proet}$. Hence we obtain an equivalence of topoi $\Shv\bigl(BG_{K,\proet};\Set\bigr)\simeq \Shv\bigl((\Spa K)_{\qproet};\Set\bigr)$ which is readily seen to be induced by the morphism $\pi$.   
\end{proof}
Applying the equivalence of topoi from Lemma \ref{lemma:quasiproetale-G-sets}, the $E_{\infty}$-algebra $\rR f_*\bQ_p\in \cD((\Spa K)_{\qproet},\bQ_p)$ is carried to an $E_{\infty}$-algebra in $\cD(G_K;\bQ_p)$ that we denote by $\RGamma_{\et}(X_{\bC_p},\bQ_p)$ as well. We now turn to proving that this object can be presented by a complex of de Rham representations:
\begin{proof}[Proof of Proposition \ref{prop: qp etale cohomology complex is de rham}]
First, the structure of a condensed $G_K$-module on each cohomology group $\hH^i_{\et}(X_{\bC_p},\bQ_p)$ arises from a continuous finite-dimensional $\bQ_p$-representation {(see \cite[Proposition 4.9]{bosco})}, hence $\RGamma_{\et}(X_{\bC_p},\bQ_p)$ lies in the full subcategory $\cD^b(\Rep_{\bQ_p} G_K)\subset \cD(G_K;\bQ_p)$ by Lemma \ref{lem: repgk derived of its heart}.

By Theorem \ref{thm: de rham derived category pullback}, we need to lift $\RGamma_{\et}(X_{\bC_p},\bQ_p)\in \cD^b(\Rep_{\bQ_p}G_K)$ to an $E_{\infty}$-algebra object of the fiber product
\begin{equation*}
\begin{tikzcd}
\cD^b(\Rep_{\bQ_p}G_K)\arrow[rd, "{V\mapsto V\otimes_{\bQ_p}\BB_{\dR}^+}", swap] & & \cD_{\fil}(K)\arrow[ld, "{D\mapsto \Fil^0(D\otimes_K \BB_{\dR})}"] \\
& \cD(G_K;\BB_{\dR}^+)
\end{tikzcd}
\end{equation*}
We will take the corresponding object $D\in\cD_{\fil}(K)$ to be the de Rham cohomology $\RGamma_{\dR}(X/K)$ equipped with its Hodge filtration. It is made into a commutative algebra object of $\cD_{\fil}(K)$ by applying the lax symmetric monoidal functor $\RGamma(X,-)$ to the filtered commutative differential graded algebra $\cO_X\xrightarrow{d}\Omega^1_{X/K}\xrightarrow{d}\ldots$ in sheaves of $K$-vector spaces on $X$.

To lift the pair $\RGamma_{\et}(X_{\bC_p},\bQ_p), \RGamma_{\dR}(X/K)$ to an object of the above fiber product, it remains to construct an equivalence of commutative algebra objects
\[
\RGamma_{\et}(X_{\bC_p},\bQ_p)\otimes_{\bQ_p}\BB_{\dR}^+\simeq \Fil^0(\RGamma_{\dR}(X/K)\otimes_K \BB_{\dR})
\]
in $\cD(G_K;\BB_{\dR}^+)$. It is obtained by passing to $\Fil^0$ in the de Rham comparison isomorphism for rigid-analytic spaces \cite[Theorem 8.4]{scholze-rigid} providing an equivalence of filtered commutative algebra objects in $\cD(G_K;\BB_{\dR})$:
\begin{equation}\label{de rham comparison formula}
\RGamma_{\et}(X_{\bC_p},\bQ_p)\otimes_{\bQ_p}\BB_{\dR}\simeq \RGamma_{\dR}(X/K)\otimes_K \BB_{\dR}.
\end{equation}
In loc.~cit.~this equivalence is stated on the level of individual cohomology groups, but it is constructed by proving that the natural maps \begin{multline*}\RGamma_{\et}(X_{\bC_p},\bQ_p)\otimes_{\QQ_p} \BB_{\dR}\to \RGamma_{\proet}(X_{\bC_p},\bB_{\dR}) \\ \RGamma_{\proet}(X_{\bC_p},\bB_{\dR})\to \RGamma_{\proet}(X_{\bC_p},\cO\bB_{\dR}\xrightarrow{\nabla}\ldots) \\ \RGamma_{\dR}(X/K)\otimes_K \BB_{\dR}\to \RGamma_{\proet}(X_{\bC_p},\cO\bB_{\dR}\xrightarrow{\nabla}\ldots)\end{multline*} of filtered commutative algebras are equivalences, which induces the equivalence (\ref{de rham comparison formula}).
\end{proof}

\section{Formality from the weight-monodromy conjecture}

\subsection{Formulation of the main theorem}

Let $K$ be a finite extension of $\QQ_p$, and let $\ell$ be a prime number (possibly equal to $p$). In this subsection, we formulate our main theorem, which will be proven in Subsection~\ref{subsection:proof-formality}.

\begin{thm}\label{thm:main-formality}
Let $X$ be a smooth proper rigid-analytic space over $K$. Assume that the weight-monodromy conjecture holds for $\hH^i_\et(X_{\bC_p}, \QQ_\ell)$ for all $i\geq 0$. Then the $E_{\infty}$-algebra $\RGamma_{\et}(X_{\bC_p},\bQ_{\ell})$ is formal.
\end{thm}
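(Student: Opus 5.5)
The plan is to reduce Theorem~\ref{thm:main-formality} to the general algebraic statement announced in the introduction (Theorem~\ref{thm:intro-pure wd algebra is formal}, i.e.\ Corollary~\ref{cor: pure wd algebra is formal}). That statement says an $E_\infty$-algebra in $\cD^b_{\pure}(\Rep_E\WD_K)$ is formal after forgetting to $\cD(\Vect_E)$. So the task is to promote $\RGamma_{\et}(X_{\bC_p},\bQ_\ell)$, as an $E_\infty$-algebra, to an object of $\cD^b(\Rep_E \WD_K)$ whose $i$-th cohomology is monodromy-pure of weight $i$. The hypothesis of the theorem is exactly the purity condition on each $\hH^i$, so the work lies in producing the lift and checking that the underlying $E_\infty$-algebra is unchanged, possibly after a faithfully flat extension of scalars.

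\textbf{Case $\ell\neq p$.} First, each $\hH^i_\et(X_{\bC_p},\bQ_\ell)$ is a finite-dimensional continuous representation, and it vanishes outside $[0,2\dim X]$. Hence Lemma~\ref{lem: repgk derived of its heart} lifts $\RGamma_{\et}$ (defined via the quasi-pro\'etale formalism as in the $p$-adic case) to an $E_\infty$-algebra in $\cD^b(\Rep_{\bQ_\ell}G_K)$; the lift is symmetric monoidal because the embedding is fully faithful and symmetric monoidal. Next, I would use Grothendieck's monodromy theorem, which identifies $\Rep_{\bQ_\ell}G_K$ with a full subcategory of $\Rep_{\bQ_\ell}\WD_K$ via an exact, symmetric monoidal (after fixing $\varphi$ and a generator of tame inertia), faithful functor that does not change the underlying vector space. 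Applying the induced functor on $\cD^b$ gives an $E_\infty$-algebra in $\cD^b(\Rep_{\bQ_\ell}\WD_K)$ with the same underlying $E_\infty$-algebra in $\cD(\Vect_{\bQ_\ell})$. The weight-monodromy hypothesis then places it in $\cD^b_{\pure}$, and Corollary~\ref{cor: pure wd algebra is formal} finishes this case.

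\textbf{Case $\ell=p$.} Proposition~\ref{prop: qp etale cohomology complex is de rham} lifts $\RGamma_\et$ to an $E_\infty$-algebra in $\cD^b(\Rep^{\dR}_{\bQ_p}G_K)$. By the $p$-adic monodromy theorem, de Rham representations are potentially semistable, and Fontaine's functor $D_{\pst}$ is exact, faithful and symmetric monoidal from $\Rep^{\dR}_{\bQ_p}G_K$ to $(\varphi,N,G_K)$-modules over $K_0^{\nr}$ (the maximal unramified extension of $K_0$, where $K_0\subset K$ is the maximal unramified subfield). Composing with Fontaine's construction produces Weil--Deligne representations over $E=K_0^{\nr}$. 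This yields an $E_\infty$-algebra $B$ in $\cD^b_{\pure}(\Rep_{K_0^{\nr}}\WD_K)$, since weight-monodromy for $\hH^i$ is defined through this WD representation. The corollary makes $B$ formal in $\cD(\Vect_{K_0^{\nr}})$. The underlying $K_0^{\nr}$-algebra of $B$ is $D_{\pst}$ applied to the complex; I then need it to be equivalent to $\RGamma_{\et}(X_{\bC_p},\bQ_p)\otimes_{\bQ_p}\overline{\BB}$-type base change. Concretely, $V\otimes_{\bQ_p}\BB_{\stab}\otimes \overline{K}\simeq D_{\pst}(V)\otimes \BB_{\stab}\otimes\overline K$ naturally and monoidally on pst representations, so both sides become equivalent $E_\infty$-algebras over the nonzero ring $\BB_{\stab}\otimes_{K_0^{\nr}}\overline{K}$.

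To conclude, I will use that formality of an $E_\infty$-algebra over a characteristic-zero field can be detected after base change to any field extension. The clean form I would prove is the following: formality is governed by obstruction classes, or by the existence of a section of a torsor under automorphisms of $\bigoplus \hH^i[-i]$, and this persists under faithfully flat extension. To avoid rings, I would pass to a residue field of $\BB_{\stab}\otimes\overline K$, so that both $\bQ_p$ and $K_0^{\nr}$ embed into a common field $L$. I expect this descent-of-formality step to be the main obstacle, since it must be phrased at the $E_\infty$ level. Alternatively, one could use the de Rham version (Lemma~\ref{dr vs padic etale}) and the filtered comparison.
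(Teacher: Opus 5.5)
Your proposal is correct and follows the paper's proof essentially step for step: lift via Lemma~\ref{lem: repgk derived of its heart} (and via Proposition~\ref{prop: qp etale cohomology complex is de rham} when $\ell=p$), apply $\DD$, and invoke Corollary~\ref{cor: pure wd algebra is formal}. For $\ell=p$, both you and the paper compare the two $E_\infty$-algebras through the monoidal isomorphism $V\otimes_{\bQ_p}B_{\stab,\obQ_p}\simeq \DD(V)\otimes_{\bQ_p^{\nr}}B_{\stab,\obQ_p}$ after base change to a field receiving both $\bQ_p$ and $\bQ_p^{\nr}$; the paper takes this field to be $\BB_{\dR}$, where you take a residue field of $B_{\stab,\obQ_p}$. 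The descent step you flag as the main obstacle does not need to be reproved: it is exactly the invariance of formality under extensions of characteristic-zero fields, \cite[Corollary 6.9]{Obstructions}, which the paper simply cites.
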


Definition~\ref{defn:weight-monodromy} explains what it means for ``the weight-monodromy conjecture to hold for $\hH^i_\et(X_{\bC_p}, \QQ_\ell)$'' and Theorem~\ref{thm:main-formality} is proven in Subsection~\ref{subsection:proof-formality}. Together with the following lemma relating formality of $\bQ_p$-\'etale cohomology with that of de Rham cohomology, Theorem~\ref{thm:main-formality} gives the main Theorem~\ref{thm: wm implies formality intro} from the introduction.

\begin{lm}\label{dr vs padic etale}
Let $X$ be a smooth proper rigid-analytic space over $K$.  Then the $E_\infty$-algebra $\RGamma_{\et}(X_{\bC_p},\bQ_p)$ is formal if and only if the de Rham cohomology $E_{\infty}$-algebra $\RGamma_{\dR}(X/K)$ is formal. 
\end{lm}
\begin{proof}
As discussed in the proof of Proposition \ref{prop: qp etale cohomology complex is de rham}, we have an equivalence of $E_{\infty}$-algebras over the field $\BB_{\dR}$
\begin{equation}\label{eqn: bdr comparison formula}
\RGamma_{\et}(X_{\bC_p},\bQ_p)\otimes_{\bQ_p}\BB_{\dR}\simeq \RGamma_{\dR}(X/K)\otimes_K \BB_{\dR}.
\end{equation}
Formality of an $E_{\infty}$-algebra can be checked after an arbitrary extension of characteristic zero fields \cite[Corollary 6.9]{Obstructions}, hence the identification (\ref{eqn: bdr comparison formula}) proves the lemma.
\end{proof}

\subsection{Weil--Deligne representations}

In this subsection, we fix a finite extension $\QQ_p\subset K$ with Galois group $G_K:=\Aut(\oK/K)$, a prime number $\ell$, and a field $E$ of characteristic $0$ that will serve as the field of coefficients. One should feel free to assume that $E=\QQ_\ell$ when $\ell\neq p$ and that $E = \QQ_p^{\nr}$ when $\ell=p$. We denote by $W_K$ the Weil group of $K$ and by $q$ the size of the residue field $k=\bF_q$ of $K$. The main goal of this subsection is to recall the notion of Weil--Deligne representations and prove its basic properties. 

An \emph{$E$-linear Weil--Deligne representation} is a finite-dimensional $E$-vector space equipped with an action of the Weil group $W_K$, on which the inertia acts through a finite quotient, and an $E$-linear map $N \colon V \to V(-1)$ of $W_K$-representations. We denote by $\Rep_E WD_K$ the category of $E$-linear Weil--Deligne representations. 

The category $\Rep_E WD_K$ has a natural symmetric monoidal structure, where $V_1\otimes_E V_2$ is defined to be the usual tensor product of $W_K$-representations and the monodromy operator $N_{V_1\otimes_E V_2}$ is defined as $N_{V_1}\otimes \id_{V_2} + \id_{V_1}\otimes N_{V_2}$. This makes $\Rep_E WD_K$ into a symmetric monoidal abelian category. We denote by $\cD(\Rep_E WD_K)$ the associated symmetric monoidal derived $\infty$-category (see \cite[Proposition A.5]{Nikolaus-Scholze}).

We now discuss some examples of Weil--Deligne representations attached to (some) Galois representations. 

\begin{examples}\label{ex: wd from galois reps}
    \begin{enumerate}
    \item Let $\ell\neq p$. Then there is a fully faithful exact symmetric monoidal functor 
    \[
    \DD \colon \Rep_{\QQ_\ell} G_K \to \Rep_{\QQ_\ell} WD_K.
    \]
    We refer the reader to \cite[\textsection 2.2]{Fon-ell} and \cite[Example 2.3]{betts-litt} for the construction of this functor. Choosing a compatible system of $\ell$-power roots of unity in $\oK$ and a Frobenius element, we may identify the underlying vector space of $\DD(V)$ with $V$ for every $V\in \Rep_{\bQ_\ell}G_K$, compatibly with tensor product. The exactness of this functor and \cite[Proposition A.5]{Nikolaus-Scholze} allow us to derive it to a $t$-exact symmetric monoidal functor $\DD\colon \cD(\Rep_{\QQ_\ell} G_K) \to \cD(\Rep_{\QQ_\ell} WD_K)$.
    \item Let $\ell = p$. Then there is an exact symmetric monoidal functor 
    \[
    \DD \colon \Rep^\dR_{\QQ_p} G_K \to \Rep_{\QQ_p^\nr} WD_K,
    \]
    sending a de Rham representation $V$ to the Weil--Deligne representation constructed out of the $(\varphi,N,G_K)$-module $D_{\pst}(V)$. Here, we use that every de Rham representation is potentially semi-stable \cite[Th\'eor\`eme 0.7]{Berger}, so that $\DD$ is faithful and monoidal. We refer to \cite[\textsection 4.2]{fontaine-semistables} and \cite[Example 2.4]{betts-litt} for the details of the definition of this functor. For any $V\in \Rep_{\bQ_p}^{\dR}G_K$ there is a natural isomorphism \[V\otimes_{\bQ_p}B_{\stab,\obQ_p}\simeq \DD(V)\otimes_{\bQ^{\nr}_p}B_{\stab,\obQ_p}\] of $B_{\stab,\obQ_p}\coloneqq B_{\stab}\otimes_{\bQ_p^{\nr}}\obQ_p$-modules compatible with tensor products. Similarly to the $\ell$-adic situation, we can derive to a functor $\DD\colon \cD(\Rep^\dR_{\QQ_p} G_K) \to \cD(\Rep_{\QQ_p^\nr} WD_K)$.
    \end{enumerate}
\end{examples}

Our next goal is to define what it means for cohomology of smooth rigid-analytic spaces to satisfy the weight-monodromy conjecture. For this, we need the following definition. 

\begin{defn}\label{defn:monodromy-pure} An $E$-linear Weil--Deligne representation $V$ is {\it monodromy-pure} of weight $i$ if, for any integer $j$, a (fixed) Frobenius lift $\varphi\in W_K$ acts on $\gr_j^M V$ of the monodromy filtration with eigenvalues that are $q$-Weil numbers of weight $i+j$.
\end{defn}

We refer to \cite[Proposition 1.6.1]{weil2} for the definition of the monodromy filtration. The definition of a monodromy-pure representation is independent of the choice of a Frobenius lift because, for any two Frobenius lifts $\varphi, \varphi'\in W_K$, there is an integer $n\geq 1$ such that the actions of $\varphi^n$ and $\varphi'^n$ on $V$ (and, therefore, on each $\gr^M_i V$) coincide. Furthermore, \cite[Proposition 1.6.9(i)]{weil2} implies that a tensor product of monodromy-pure Weil--Deligne representations of weights $i$ and $j$ is monodromy-pure of weight $i+j$. 

\begin{defn}\label{defn:weight-monodromy} Let $X$ be a smooth proper rigid-analytic space over $K$. We say that \emph{the weight-monodromy conjecture holds for $\hH^i_\et(X_{\bC_p}, \QQ_\ell)$} if the associated Weil--Deligne representation $\DD\bigl(\hH^i_\et(X_{\bC_p}, \QQ_\ell)\bigr)$ is monodromy-pure of weight $i$.
\end{defn}
\begin{rem}
According to the $\ell$-independence conjecture \cite[Conjecture 3.9]{scholze-cdm}, if the weight-monodromy conjecture for a given $X$ and $i$ holds for one prime $\ell$, then it should hold for every other prime as well.
\end{rem}

Now we consider the functor $\rR\Gamma(WD_K, -) \coloneqq \rR \Hom_{WD_K}(E, -) \colon \cD(\Rep_E WD_K) \to \cD(\Vect_{E})$. We also set $\hH^i(WD_K, M) \coloneqq \hH^i\bigl(\rR\Gamma(WD_K, M)\bigr)$ for any $M\in \Rep_E WD_K$. Our last main goal of this subsection is to present a complex computing $\rR\Gamma(WD_K, M)$ for any $M\in \Rep_E WD_K$.

The following lemma will be used to identify the Ext functors in the category $\Rep_E WD_K$ and its mixed version:

\begin{lemma}\label{lemma:killing-homs-into-E(1)} Let $V, U\in \Rep_E WD_K$ and let $S=E\cdot e_1 \oplus E\cdot e_2$ be a two-dimensional Weil--Deligne representation such that $N_S=0$, the inertia $I_K$ acts trivially on $S$, and any Frobenius lift $\varphi$ acts via the unique $E$-linear transformation such that $\varphi(e_1)=e_1$ and $\varphi(e_2)=e_1+e_2$. Let $V \hookrightarrow V\otimes_E \Sym^r S$ be a homomorphism of Weil--Deligne representations defined via the formula $v\mapsto v\otimes e_1^r$. Then the induced morphism
\[
\Hom_{WD_K}\bigl(V\otimes_E \Sym^r S, U\bigr) \to \Hom_{WD_K}\bigl(V, U\bigr)
\]
is zero for any $r\geq \dim_E V \cdot \dim_E U$.
\end{lemma}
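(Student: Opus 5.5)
The plan is to copy the proof of Lemma~\ref{lemma:killing-homs-into-E(1)-galois} almost verbatim, since the category $\Rep_E WD_K$ has the same formal features that argument used: internal Homs, duals and tensor-hom adjunction. The first step is the reduction to the unit. The dual $V^\vee$ of a Weil--Deligne representation carries the contragredient $W_K$-action and monodromy $-N_V^{\vee}$. With these conventions,
\[
\Hom_{WD_K}(V\otimes_E \Sym^r S, U)\simeq \Hom_{WD_K}(\Sym^r S, V^\vee\otimes_E U),
\]
and $\Hom_{WD_K}(V,U)\simeq \Hom_{WD_K}(E, V^\vee\otimes_E U)$. These identifications are compatible with restriction along $v\mapsto v\otimes e_1^r$, respectively $1\mapsto e_1^r$. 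Setting $W=V^\vee\otimes_E U$, we have $\dim_E W=\dim_E V\cdot\dim_E U\le r$. So it suffices to prove the following: for every morphism $T\colon \Sym^r S\to W$ of Weil--Deligne representations with $\dim_E W\le r$, we have $T(e_1^r)=0$.

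The second step is to fix a Frobenius lift $\varphi$ and work in $\Sym^r S$. Since $N_S=0$, the monodromy on $\Sym^r S$ is zero, inertia acts trivially, and $\varphi$ acts by the $r$-th symmetric power of the unipotent Jordan block. Direct computation then gives $(\varphi-1)e_1^r=0$. The operator $\varphi-1$ lowers the $e_2$-degree by one, up to lower-order terms, which yields $(\varphi-1)^r e_2^r=r!\,e_1^r$. Because $T$ is $W_K$-equivariant, it intertwines $\varphi$ with $\varphi_W$. Since $r!$ is invertible in characteristic $0$, we get
\[
T(e_1^r)\in \mathrm{Im}\bigl((\varphi_W-1)^r\bigr)\cap\mathrm{Ker}(\varphi_W-1).
\]

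The final step is linear algebra. Decompose $W=W_1\oplus W_{\neq 1}$ into the generalized $1$-eigenspace of $\varphi_W$ and its complement. On $W_1$, the operator $\varphi_W-1$ is nilpotent on a space of dimension at most $r$, so $(\varphi_W-1)^r$ vanishes there; hence $\mathrm{Im}\bigl((\varphi_W-1)^r\bigr)=W_{\neq 1}$. On the other hand, $\mathrm{Ker}(\varphi_W-1)\subset W_1$, so the intersection is zero, and therefore $T(e_1^r)=0$.

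The only point that needs care is the first step, namely checking that the duality and adjunction are correct in $\Rep_E WD_K$. In particular, the monodromy of $V^\vee\otimes U$ must be $-N_V^\vee\otimes 1+1\otimes N_U$, the Tate twist in $N\colon V\to V(-1)$ must be handled correctly, and the identifications must respect the inclusion $e_1^r$. This is standard, and it is the only place where the argument differs from the Galois case. Note that the monodromy operator plays no role in the rest of the argument, because $N_S=0$.
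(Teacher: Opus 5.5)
Your proposal is correct and follows the paper's proof: the paper says the argument is completely analogous to Lemma~\ref{lemma:killing-homs-into-E(1)-galois}, which reduces via $\Hom_{WD_K}(V\otimes_E \Sym^r S,U)\simeq\Hom_{WD_K}(\Sym^r S,V^\vee\otimes_E U)$ and then shows $T(e_1^r)\in\mathrm{Im}\bigl((\varphi_W-1)^r\bigr)\cap\mathrm{Ker}(\varphi_W-1)=0$. Your generalized-eigenspace (Fitting) argument for the last vanishing is valid over any $E$, and it supplies a step the paper only asserts.
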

\begin{proof}
Completely analogous to the proof of Lemma \ref{lemma:killing-homs-into-E(1)-galois}.
\end{proof}

Now we show that $\RHom$ in the derived category of finite-dimensional representations of $WD_K$ is computed by a Koszul-type complex:

\begin{lemma}\label{lemma:Weil-Deligne-cohomology} Let $V\in \Rep_E WD_K$ and let $\varphi\in W_K$ be a lift of Frobenius. Then the complex
\[
C^{\bullet}(V)\coloneqq [V^{I_K} \xrightarrow{(\varphi_{V} - \id, N)} V^{I_K}\oplus V(-1)^{I_K} \xrightarrow{(N, \id- \varphi_{V(-1)}) } V(-1)^{I_K}]
\]
is quasi-isomorphic to $\rR\Gamma(WD_K, V)$. 
\end{lemma}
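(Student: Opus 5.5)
We will show that $V\mapsto C^{\bullet}(V)$ is a universal $\delta$-functor on $\Rep_E WD_K$ that agrees in degree $0$ with $\Hom_{WD_K}(E,-)$. Then, by Lemma~\ref{lemma:exts-are-universal} applied to $\cA=\Rep_E WD_K$ and $X=E$, its cohomology agrees with $\Ext^i_{WD_K}(E,-)=\hH^i(WD_K,-)$.

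\textbf{Degree $0$ and the $\delta$-functor structure.} First we check $\hH^0(C^\bullet(V))=\Hom_{WD_K}(E,V)$. A morphism $E\to V$ is a vector $v$ fixed by $W_K$ and killed by $N$. Since $W_K$ is generated by $I_K$ and $\varphi$, this means exactly $v\in V^{I_K}$ with $\varphi v=v$ and $Nv=0$. One also checks that $C^\bullet(V)$ is a complex: $N\varphi=q^{-1}\varphi N$ on $V$ translates, with the twist on $V(-1)$, into $N\circ(\varphi_V-\id)=(\varphi_{V(-1)}-\id)\circ N$.

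Next, the inertia acts through a finite quotient and $E$ has characteristic $0$, so $V\mapsto V^{I_K}$ is exact. Hence a short exact sequence of Weil--Deligne representations gives a short exact sequence of complexes $C^\bullet$, and therefore a long exact sequence. This makes $\{\hH^i(C^\bullet(-))\}_{i\ge0}$ a $\delta$-functor concentrated in degrees $0,1,2$.

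\textbf{Universality (the main point).} By \cite[Proposition~4.2]{Buchsbaum} it suffices to show effaceability in degrees $1$ and $2$: for every class $u$ there should be an injection $V\hookrightarrow V'$ killing it.

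For $i=1$, a class in $\hH^1$ is a pair $(a,b)\in V^{I_K}\oplus V(-1)^{I_K}$ with $Na=(\varphi-1)b$. We realize it as an extension $0\to V\to W\to E\to 0$ with $W=V\oplus Ee$, where $\varphi e=e+a$, $I_K$ fixes $e$, and $Ne=b$. The cocycle condition is exactly the compatibility $N\varphi=q^{-1}\varphi N$ on $e$. The class then dies in $\hH^1(C^\bullet(W))$, since $(a,b)$ is the coboundary of $e$.

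For $i=2$, we have $\hH^2(C^\bullet(V))=\coker\bigl(V^{I_K}\oplus V(-1)^{I_K}\to V(-1)^{I_K}\bigr)$. This is dual to the $\hH^0$ of a dual-type complex, and I expect it to identify with $\Hom_{WD_K}(V,E(-1))^{\vee}$ up to the twist conventions. I would verify this concretely by linear algebra: the cokernel is the space of coinvariants of $(V(-1)^{I_K},\varphi,N)$, and its dual consists of functionals on $V^{I_K}$ invariant under $\varphi$ and killed by $N$, which are the Weil--Deligne maps $V\to E(-1)$ because $I_K$ acts semisimply through a finite quotient. Granting this, the injection $V\hookrightarrow V\otimes_E\Sym^rS$ of Lemma~\ref{lemma:killing-homs-into-E(1)} with $r\ge\dim V$ kills $\Hom_{WD_K}(-,E(-1))$, and hence effaces $\hH^2$, by naturality of the duality.

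I expect the obstacle to be pinning down this degree-$2$ duality precisely and checking that it is natural in $V$, which is what effaceability needs. Should it prove awkward, a more direct route is available: effacing $\hH^2$ only requires a $V'\supset V$ in which every element of $V(-1)^{I_K}$ lies in $(\varphi-1)V'(-1)^{I_K}+N V'^{I_K}$. The unipotent Frobenius action on $\Sym^r S$ provides this, since $\varphi-1$ becomes surjective onto the image of $V\otimes e_1^r$ when $r$ is large.

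Once universality is established, Lemma~\ref{lemma:exts-are-universal} identifies $\hH^i(C^\bullet(V))$ with $\Ext^i_{WD_K}(E,V)$. To upgrade this to the claimed quasi-isomorphism with $\rR\Gamma(WD_K,V)$, we construct a natural comparison map. The cleanest way is to observe that $C^\bullet(V)$ computes $\RHom(E,V)$ via the explicit Yoneda resolution: write $C^\bullet(V)=\Hom(P^\bullet,V)$, where $P^\bullet$ is a Koszul-type complex of ind-objects resolving $E$.
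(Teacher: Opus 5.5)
Your argument is essentially the paper's proof. Exactness of $(-)^{I_K}$ makes $\hH^i(C^\bullet(-))$ a $\delta$-functor, and Buchsbaum effaceability is checked in two places: in degree $1$ via the extension $V\oplus Ee$, and in degree $2$ via the duality $C^\bullet(V)^\vee[-2]\simeq C^\bullet(V^\vee(1))$ together with $V\hookrightarrow V\otimes_E\Sym^{\dim V}S$. Lemma~\ref{lemma:exts-are-universal} then concludes. There are two harmless slips. First, the degree-$2$ dual is $\Hom_{WD_K}(V,E(1))$, not $\Hom_{WD_K}(V,E(-1))$; this is irrelevant because Lemma~\ref{lemma:killing-homs-into-E(1)} allows any target. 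Second, your final Koszul-resolution step is unnecessary: over the field $E$, the natural isomorphism $\hH^i(C^\bullet(V))\simeq\Ext^i_{WD_K}(E,V)$ already gives the quasi-isomorphism. That step would in any case need pro-objects rather than ind-objects to compute $\Hom(P^\bullet,V)$.
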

Note that, after identifying $V^{I_K}\simeq V(-1)^{I_K}$ by trivializing $E(-1)$, the Frobenius operator $\varphi_{V(-1)}$ becomes equal to $q\cdot \varphi_V$. 
\begin{proof}
    First, we note that the functor of inertia invariants $(-)^{I_K}$ is exact on $\Rep_E WD_K$ since the action of $I_K$ factors through a finite quotient and $\mathrm{char}~E=0$. This implies that, for any short exact sequence $0 \to V' \to V \to V'' \to 0$, the associated sequence of complexes $0 \to C^\bullet(V') \to C^\bullet(V) \to C^\bullet(V'') \to 0$ is exact. Thus, the sequence $\{V \mapsto \hH^i\bigl(C^\bullet(V)\bigr)\}_{i\in \ZZ_{\geq 0}}$ is naturally a $\delta$-functor. Lemma~\ref{lemma:exts-are-universal} implies that it suffices to show that there is a functorial isomorphism $\Hom_{WD_K}(E, V) \simeq \hH^0\bigl(C^\bullet(V)\bigr)$ for any $V$ and that $\{V \mapsto \hH^i\bigl(C^\bullet(V)\bigr)\}_{i\in \ZZ_{\geq 0}}$ is a universal $\delta$-functor.

    The first claim is clear since $\hH^0\bigl(C^\bullet(V)\bigr) \simeq V^{N=0, W_K} \simeq \Hom_{WD_K}(E, V)$ naturally in $V$. Therefore, it suffices to show that $\{V \mapsto \hH^i\bigl(C^\bullet(V)\bigr)\}_{i\in \ZZ_{\geq 0}}$ is a universal $\delta$-functor. Now \cite[Proposition~4.2]{Buchsbaum} implies that it suffices to show that, for any $V\in \Rep_E WD_K$ and a class $u\in \hH^i\bigl(C^\bullet(V)\bigr)$ for $i=1$ or $2$, there is an injection $V\hookrightarrow V'$ such that the image of $u$ in $\hH^i\bigl(C^\bullet(V')\bigr)$ is zero. 

    For $i=1$, we suppose that $u$ is represented by an element $(x, y)\in V^{I_K}\oplus V^{I_K}(-1)$. Then we take $V'\coloneqq V\oplus E\cdot e$ and endow it with the unique structure of a Weil--Deligne representation such that $V\subset V'$ is a sub-representation, and $e$ is invariant under all of $I_K$ and we have $N_{V'}(e)=y,\varphi_{V'}(e)=x\oplus e$. This Weil--Deligne representation is well-defined by the assumption that $(x,y)$ is a cocycle in $C^1(V)$. One readily checks that the map $H^1(C^{\bullet}(V))\to H^1(C^{\bullet}(V'))$ induced by the natural morphism $V\hookrightarrow V'$ kills the class of $u$.

    \comment{promote it to a Wel--Deligne representation by defining the monodromy operator via the formula $N_{V'}(v\oplus c\cdot e)=N(v)+cy$ and the Weil action via the inductive formula 
    \[
    \varphi^n\sigma(v\oplus c\cdot e)= \begin{cases} 
  \varphi^n\sigma(v) + c\cdot \varphi^{n-1}(e+x) & \text{if } n > 0 \\
  \sigma(v) + c\cdot e & \text{if } n= 0 \\
  \varphi^n\sigma(v) + c\cdot \varphi^{n+1}(e-\varphi^{-1}(x)) & \text{if } n < 0 
\end{cases}
    \]
    for any $\varphi^n\sigma\in W_K$ with $\sigma\in I_K$. Then one readily checks that $V'$ is a well-defined Weil--Deligne representation and the natural morphism $V \xhookrightarrow{v\mapsto v\oplus 0} V'$ kills the class of $u$. }

    For $i=2$, we note that there is an evident isomorphism of complexes $C^\bullet(V)^{\vee}[-2]\simeq C^\bullet(V^{\vee}(1))$. By passing to cohomology, we conclude that $\hH^2\bigl(C^\bullet(V)\bigr)^\vee \simeq \hH^0\bigl(C^\bullet(V^\vee(1))\bigr) \simeq \Hom_{WD_K}(E, V^\vee(1))\simeq \Hom_{WD_K}(V, E(1))$. Therefore, it suffices to show that, for any $V\in \Rep_E WD_K$, we can find an injection $V \hookrightarrow V'$ such that the natural morphism $\Hom_{WD_K}\bigl(V', E(1)\bigr) \to \Hom_{WD_K}\bigl(V, E(1)\bigr)$ is zero. This is achieved by taking $V'\simeq V\otimes_E \Sym^{\dim_E V} S$ from Lemma~\ref{lemma:killing-homs-into-E(1)}. 
\end{proof}

\begin{cor}\label{corollary:Weil-Deligne-Tate-duality} Let $V\in \Rep_E WD_K$. Then $\rR\Gamma(WD_K, V)$ is concentrated in cohomological degrees $[0,2]$ and each of its cohomology groups is finite-dimensional over $E$. Also, there is a natural equivalence $\rR\Gamma(WD_K, V)^{\vee} \simeq \rR\Gamma(WD_K, V^\vee(1)[2])$.
\end{cor}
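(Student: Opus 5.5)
The plan is to read everything off the explicit complex of Lemma~\ref{lemma:Weil-Deligne-cohomology}. By that lemma, $\rR\Gamma(WD_K, V)$ is quasi-isomorphic to the three-term complex
\[
C^{\bullet}(V) = \bigl[V^{I_K} \to V^{I_K}\oplus V(-1)^{I_K} \to V(-1)^{I_K}\bigr],
\]
sitting in degrees $0,1,2$. Each term is a subspace of a finite-dimensional $E$-vector space. Hence $\rR\Gamma(WD_K,V)$ is concentrated in degrees $[0,2]$ and has finite-dimensional cohomology, which gives the first two assertions immediately.

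For the duality statement, I would write down the pairing at the level of complexes. Three facts are needed:
\begin{itemize}
\item Since $I_K$ acts through a finite quotient and $\mathrm{char}\, E=0$, there is a natural identification $(V^{I_K})^{\vee}\simeq (V^{\vee})^{I_K}$, via averaging over the finite quotient.
\item Similarly, $(V(-1)^{I_K})^{\vee}\simeq V^{\vee}(1)^{I_K}$ and $(V^{I_K})^\vee \simeq (V^\vee(1)(-1))^{I_K}$.
\item The dual of $\varphi_V-\id$ is, up to composing with $\varphi^{-1}$ (an automorphism), $\id-\varphi_{V^\vee}$, and the dual of $N$ is $-N_{V^\vee}$.
\end{itemize}
With these in hand, one checks that the linear dual $C^{\bullet}(V)^{\vee}[-2]$ is isomorphic to $C^{\bullet}(V^{\vee}(1))$. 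This is the ``evident isomorphism'' already invoked in the proof of Lemma~\ref{lemma:Weil-Deligne-cohomology}. The concrete step is to verify that the squares commute up to the signs and Frobenius twists. The factor of $\varphi^{-1}$ can be absorbed by precomposing the degree-0 and degree-2 identifications with $\varphi$, and the signs by the standard sign in the dual of a two-term block.

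Combining this with the quasi-isomorphism of Lemma~\ref{lemma:Weil-Deligne-cohomology} applied to both $V$ and $V^\vee(1)$ yields $\rR\Gamma(WD_K,V)^\vee\simeq \rR\Gamma(WD_K,V^\vee(1))[2]$.

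The main obstacle is naturality. The isomorphism of complexes depends on the chosen Frobenius lift, and the quasi-isomorphism with $\rR\Gamma$ is only produced via universality of $\delta$-functors. To obtain a natural equivalence, I would instead define the duality map intrinsically. The plan is to use the evaluation $V\otimes V^\vee(1)\to E(1)$ together with the cup product
\[
\rR\Gamma(WD_K,V)\otimes \rR\Gamma(WD_K,V^{\vee}(1))\to \rR\Gamma(WD_K,E(1))\to E[-2],
\]
where the last map is the trace given by $\hH^2(C^\bullet(E(1)))\simeq E$. It then suffices to check that the resulting map is an equivalence. Checking this on cohomology reduces to the perfectness of the explicit pairing of complexes above, so the choice of Frobenius lift enters only in verifying the isomorphism and not in defining the map.
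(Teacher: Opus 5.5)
Your proposal is the paper's proof: both the degree bounds with finiteness and the duality are read off from the complex $C^\bullet(V)$ of Lemma~\ref{lemma:Weil-Deligne-cohomology} and the isomorphism of complexes $C^\bullet(V)^\vee[-2]\simeq C^\bullet(V^\vee(1))$; one small slip is that, when $N\neq 0$, the Frobenius correction cannot be confined to degrees $0$ and $2$ and must also sit on one of the two summands in degree $1$. The cup-product detour for naturality is unnecessary, because any two Frobenius lifts differ by an element of $I_K$ and hence agree on $V^{I_K}$, so $C^\bullet(V)$ and that isomorphism are already canonical and natural in $V$, which is all the paper uses; moreover, the detour's final reduction would itself need a check that the Yoneda cup product is computed by your explicit pairing.
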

\begin{proof}
    The first claim follows directly from the isomorphism $\rR\Gamma(WD_K, V)\simeq C^{\bullet}(V)$ from Lemma~\ref{lemma:Weil-Deligne-cohomology}. The second claim follows immediately from the isomorphism of complexes $C^\bullet(V)^{\vee}\simeq C^\bullet(V^{\vee}(1))[2]$. 
\end{proof}

\subsection{Mixed Weil--Deligne representations}

In this subsection, we fix a finite extension $\QQ_p\subset K$ with Galois group $G_K$, a prime number $\ell$, and a field $E$ of characteristic $0$. The main goal of this subsection is to recall the notion of mixed and monodromy-pure mixed Weil--Deligne representations and prove that two versions of the derived $\infty$-categories of mixed Weil--Deligne representations coincide. 

\begin{defn} A \emph{mixed Weil--Deligne representation} is a pair $(V, W_\bullet V)$ consisting of an object $V\in \Rep_E WD_K$ and a complete exhaustive increasing $\bZ$-indexed filtration $W_i V\subset V$ in $\Rep_E WD_K$ such that each $\gr^W_i V \coloneqq W_iV/W_{i-1}V$ is monodromy-pure of weight $i$. 

The \emph{weights} of a mixed Weil--Deligne representation $(V, W_\bullet V)$ are the integers $i\in \ZZ$ such that $\gr^W_i V\neq 0$. We denote the \emph{category of mixed Weil--Deligne representations} by $\Rep^\mix_E WD_K$. 
\end{defn}

Since the tensor product of monodromy-pure Weil--Deligne representations of weights $i$ and $j$ is monodromy-pure of weight $i+j$, one readily sees that the tensor product of two mixed Weil--Deligne representations has a natural structure of a mixed Weil--Deligne representation. So $\Rep_E^\mix WD_K$ is a symmetric monoidal category. Remarkably, the category $\Rep_E^\mix WD_K$ happens to be abelian \cite[Proposition 20]{vologodsky} (see also \cite[Theorem 2.10]{betts-litt}) because any morphism of mixed representations is strictly compatible with weight filtrations. Moreover, the forgetful functor $\Rep^\mix_E WD_K \to \Rep_E WD_K$ is symmetric monoidal and exact. 

In particular, the tensor product on $\Rep^\mix_E WD_K$ is exact in each variable. Thus, \cite[Proposition A.5]{Nikolaus-Scholze} ensures that $\cD(\Rep^\mix_E WD_K)$ is a symmetric monoidal $\infty$-category and the forgetful functor $\cD(\Rep^\mix_E WD_K) \to \cD(\Rep_E WD_K)$ is symmetric monoidal and $t$-exact. 

Since the functors of inner Hom are exact on both $\Rep_E^\mix WD_K$ and $\Rep_E WD_K$, we can extend them to $t$-exact functors
\[
\ud{\Hom}(-,-)\colon \cD(\Rep^\mix_E WD_K)^{\op} \times \cD(\Rep^\mix_E WD_K) \to \cD(\Rep^\mix_E WD_K),
\]
\[
\ud{\Hom}(-,-)\colon \cD(\Rep_E WD_K)^{\op} \times \cD(\Rep_E WD_K) \to \cD(\Rep_E WD_K),
\]
which are compatible with respect to the forgetful functors. 

\begin{defn}\label{defn:mixed-monodromy-pure} A mixed Weil--Deligne representation $(V, W_\bullet V)\in \Rep^\mix_E WD_K$ is \emph{monodromy-pure of weight $i$} if $W_{i-1}V=0$ and $W_i V= V$. 
\end{defn}

This definition clearly implies that the underlying Weil--Deligne representation is monodromy-pure of weight $i$. However, the property introduced in this definition is stronger. For instance, one can take $V$ to be a non-trivial extension $0 \to \QQ_\ell(1) \to V \to \QQ_\ell \to 0$ equipped with the filtration $W_{-3}V=0$, $W_{-2}V = W_{-1}V =\QQ_\ell(1)$, and $W_0V = V$. Then $V$ is monodromy-pure of weight $-1$, but $(V, W_\bullet V)\in \Rep_{\bQ_{\ell}}^{\mix}WD_K$ is not monodromy-pure of any weight. 

\begin{defn}\label{defn:derived-pure-reps} We define $\cD_\pure(\Rep_E^\mix WD_K) \subset \cD(\Rep_E^\mix WD_K)$ to be the full $\infty$-subcategory consisting of objects $M$ such that $\hH^i(M)$ is a monodromy-pure \underline{mixed} Weil--Deligne representation of weight $i$ for any $i\in \ZZ$ (see Definition~\ref{defn:mixed-monodromy-pure}). 

Likewise, we define $\cD_\pure(\Rep_E WD_K) \subset \cD(\Rep_E WD_K)$ to be the full $\infty$-subcategory consisting of objects $M$ such that $\hH^i(M)$ is a monodromy-pure Weil--Deligne representation of weight $i$ for any $i\in \ZZ$ (see Definition~\ref{defn:monodromy-pure}).
\end{defn}

Since the tensor products on $\cD(\Rep_E^\mix WD_K)$ and  $\cD(\Rep_E WD_K)$ are $t$-exact in each variable and the tensor product of (mixed) monodromy-pure Weil--Deligne representations is (mixed) monodromy-pure of the correct weight, we conclude that $ \cD_\pure(\Rep_E^\mix WD_K)$ and $\cD_\pure(\Rep_E WD_K)$ are symmetric monoidal $\infty$-subcategories of $\cD(\Rep_E^\mix WD_K)$ and $\cD(\Rep_E WD_K)$, respectively. 

Therefore, the symmetric monoidal forgetful functor derives to a symmetric monoidal functor $F\colon \cD_\pure(\Rep_E^\mix WD_K) \to \cD_\pure(\Rep_E WD_K)$. We wish to show that this functor is an equivalence on bounded objects.
\begin{thm}\label{thm: dbpure is dbpure of mixed} The forgetful functor $F\colon \cD^b_{\pure}(\Rep_E^\mix WD_K)\to \cD^b_{\pure}(\Rep_E WD_K)$ is a symmetric monoidal equivalence.
\end{thm}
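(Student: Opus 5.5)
The plan is to prove that $F$ is fully faithful and essentially surjective on bounded objects. Symmetric monoidality is automatic, since $F$ is the restriction of the symmetric monoidal forgetful functor.

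\emph{Full faithfulness.} Both categories are generated under finite extensions and shifts by their hearts. It therefore suffices to show that for monodromy-pure mixed representations $V_1$ of weight $a$ and $V_2$ of weight $b$, the map
\[
\RHom_{\cD(\Rep^\mix_E WD_K)}(V_1[-a],V_2[-b])\to \RHom_{\cD(\Rep_E WD_K)}(V_1[-a],V_2[-b])
\]
is an equivalence. This reduction applies the comparison only to the graded pieces $V_1[-a]$, $V_2[-b]$ of the Postnikov filtration. Using internal Hom, which is compatible with $F$, we reduce to $\RHom(E,U)$ for $U=V_1^\vee\otimes V_2$, which is monodromy-pure mixed of weight $w=b-a$. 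The group we need is $\Ext^{n}$ with $n=b-a=w$.

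On the right-hand side, Lemma~\ref{lemma:Weil-Deligne-cohomology} computes $\rR\Gamma(WD_K,U)$ by the complex $C^\bullet(U)$. We then use purity to kill most terms.
\begin{itemize}
\item[$\hH^0$.] This group is $\Hom(E,U)$, which vanishes unless $w=0$. Indeed, a nonzero $N$- and $\varphi$-invariant vector lies in $\ker N$, and the monodromy weights on $\ker N$ are $\le 0$. Its Frobenius eigenvalue $1$ has weight $0$, which forces $\gr^M_j$ with $w+j=0$ and $j\le 0$, i.e.\ $w\ge 0$.
\item[$\hH^2$.] This is dual to $\hH^0(U^\vee(1))$, and $U^\vee(1)$ is pure of weight $-w-2$. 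So $\hH^2$ vanishes unless $-w-2\ge 0$.
\end{itemize}
Refining this eigenvalue analysis, I expect $\hH^i(WD_K,U)$ to be nonzero only when $w\ge i$ for $i=0,1$, and $\hH^2$ only when $w\le -2$. These are the usual weight constraints: $\Ext^1$ between pure objects is nonzero only when the weight drops by at least one.

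On the mixed side, I would compute $\Ext^i_{\mix}(E,U)$ by the same method as Lemma~\ref{lemma:Weil-Deligne-cohomology}. Lemma~\ref{lemma:exts-are-universal} applies without enough injectives, and effaceability uses extensions $V\hookrightarrow V'$, for which we must check that the extensions can be given weight filtrations. In the $\hH^1$ effacement, $V'$ is an extension of $E$ (weight $0$) by $V$, which has weights $\ge 0$ when $V$ is pure of weight $w\ge 1$. Hence $V'$ is mixed, with the obvious filtration.

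The delicate case is $\hH^2$. Here $S$ is not monodromy-pure: its Frobenius is unipotent with weight $0$ but $N_S=0$, so it is pure of weight $0$ only after checking that $\gr^M$ is trivial. Since $N_S=0$, $S$ is in fact pure of weight $0$, so $V\otimes\Sym^rS$ is mixed with the induced filtration and the effacement goes through verbatim.

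Consequently the mixed Ext groups $\Ext^i_{\mix}(E,U)$ are computed by a complex mapping to $C^\bullet(U)$. The comparison is needed only in degree $n=w$, and in that degree both sides agree: for $w=0,1$ via the Hom and extension classes, for $w\ge 2$ both vanish, and $w<0$ gives negative Ext, which is zero. The main obstacle I expect is precisely this matching. One must show that $\Ext^1_{\mix}(E,U)\to\Ext^1(E,U)$ is an isomorphism for $w=1$, which holds because any extension of $E$ by $U$ acquires a unique weight filtration. One must also show that $\Ext^2_{\mix}(E,U)=0$ for $w=2$, which matches the vanishing of $\hH^2$ for $w\ge -1$.

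\emph{Essential surjectivity.} I would argue by induction on the amplitude. The case of a single cohomology group holds because a pure $V$ of weight $i$ gets the filtration $W_{i-1}=0$, $W_i=V$. An object $M$ with bottom cohomology $\hH^a$ is an extension of $\tau^{>a}M$ by $\hH^a[-a]$, classified by a map $\tau^{>a}M\to \hH^a[-a+1]$. By induction both pieces lift, and by full faithfulness, which we apply inductively to bounded pure objects, the classifying map lifts as well. Its cone then lifts $M$, which completes the proof.
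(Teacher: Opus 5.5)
Your overall architecture (reduce to $\RHom(E,-)$, compare Ext groups, induct on amplitude) is the paper's, but the core computation has a genuine gap.

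\textbf{The degree bookkeeping is wrong.} Since $\Hom(V_1[-a],V_2[-b])=\Hom(V_1,V_2[a-b])$, the group governing $\pi_0\Map(V_1[-a],V_2[-b])$ is $\Ext^{a-b}(V_1,V_2)=\Ext^{-w}(E,U)$, not $\Ext^{w}$. So the cases you dismiss as ``negative Ext'' ($w<0$) are exactly the ones with content. For example, for $U=E(1)$ ($w=-2$) you must show that $\Ext^2_{\mix}(E,E(1))\to\hH^2(WD_K,E(1))\simeq E$ is an isomorphism, and nothing in your argument does this.

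Conversely, the comparison you assert for $w=1$ is false. Suppose $U$ is pure mixed of weight $1$ and $0\to U\to V'\to E\to 0$ is exact in $\Rep^\mix_E WD_K$. Exactness of $W_0$ and $W_0U=0$ give $W_0V'\xrightarrow{\sim}E$, which splits the extension. Hence $\Ext^1_{\mix}(E,U)=0$, and for the same reason your $\hH^1$-effacement (``$V'$ is mixed with the obvious filtration'') fails.

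However, take $E=\QQ_\ell$ with $\ell\neq p$, and $U=\DD(\hH^1_\et(Y_{\bC_p},\QQ_\ell))$ for the Tate curve $Y=\bG_m^{\an}/p^{\bZ}$. Then $C^\bullet(U)$ has Euler characteristic $0$, $\hH^0(WD_K,U)\simeq\QQ_\ell$ and $\hH^2(WD_K,U)=0$, so $\hH^1(WD_K,U)\neq 0$. The same $U$ shows that your reduction step, which asks for an equivalence of the full $\RHom$ complexes on pure pieces, demands something false: already $\Hom_\mix(E,U)=0\neq\Hom_{WD_K}(E,U)$.

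\textbf{The missing ingredient.} You need the derived functors of $\Hom_\mix(E,-)=\Hom_{WD_K}(E,W_0(-))$; they are $\hH^i(WD_K,W_0(-))$, not $\hH^i(WD_K,-)$. This is Lemma~\ref{lemma:exts in wdmix}, $\RHom_{WD_K^{\mix}}(V_1,V_2)\simeq\RGamma\bigl(WD_K,W_0\underline{\Hom}(V_1,V_2)\bigr)$. It is proved by effacing classes in $\hH^i(WD_K,W_0V)$ through mixed injections $V\hookrightarrow V\oplus_{W_0V}U$. Here $U$ is the extension of $E$ by $W_0V$ made mixed via Lemma~\ref{lemma:lift-to-mix} for $i=1$, or $U=W_0V\otimes\Sym^dS$ for $i=2$. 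Your $\Sym^rS$ observation is correct and is used here. All weights involved are $\le 0$, which is why these objects stay mixed.

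With this formula, full faithfulness needs no piece-by-piece d\'evissage. For $V\in\cD^b_\pure$, $\RHom(E,\tau^{>0}V)\in\cD^{\ge 1}$ on both sides, and $W_0$ is the identity on $\tau^{\le 0}V$ by purity.

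\textbf{Essential surjectivity.} This step also needs the formula rather than full faithfulness. The target of the classifying map $\tau^{>a}M\to\hH^a(M)[-a+1]$ is not in $\cD^b_\pure$ (weight $a$ sits in degree $a-1$), so full faithfulness on $\cD^b_\pure$ does not lift it. You need surjectivity of $\Ext^1_{\mix}(U_2,U_1)\to\Ext^1_{WD_K}(FU_2,FU_1)$ directly. This holds because $\underline{\Hom}(U_2,U_1)\in\cD^{\le -1}_\pure$ is fixed by $W_0$ (Lemma~\ref{lemma:equality-of-homs-weights}).
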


We start with the following lemma. 

\begin{lemma}\label{lemma:lift-to-mix} Let $(V', W_\bullet V')\in \Rep^\mix_E WD_K$ be a mixed Weil--Deligne representation of non-positive weights and let $0 \to V'\to V \to V'' \to 0$ be an extension in $\Rep_E WD_K$ such that $V''$ is monodromy-pure of weight $0$. Set $W_i V = W_iV'$ if $i<0$ and $W_iV = V$ if $i\geq 0$. Then $(V, W_\bullet V)$ is a mixed Weil--Deligne representation of non-positive weights. 
\end{lemma}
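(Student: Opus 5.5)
We check the required properties of the proposed filtration $W_\bullet V$ in turn. Each $W_iV$ has to be a Weil--Deligne subrepresentation of $V$, the filtration has to be complete, exhaustive and increasing, and each graded piece $\gr^W_iV$ has to be monodromy-pure of weight $i$. Finally, the weights must be non-positive.

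\textbf{Subobjects and the filtration axioms.} For $i<0$ we have $W_iV=W_iV'\subset V'\subset V$. This is a subobject in $\Rep_E WD_K$, because $V'\hookrightarrow V$ is a morphism of Weil--Deligne representations and $W_iV'$ is a subrepresentation of $V'$. For $i\geq 0$ we have $W_iV=V$, which is trivially a subrepresentation.

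The filtration is increasing. For $i\le -1$ this holds because $W_\bullet V'$ is increasing. For $i\geq 0$ all terms equal $V$, and $W_{-1}V'\subset V$.

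It is exhaustive, since $W_iV=V$ for $i\ge 0$. It is complete because $W_iV'=0$ for $i\ll 0$: the filtration on $V'$ is complete and finite-dimensional, hence separated.

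\textbf{Graded pieces.} For $i<0$ we get $\gr^W_iV=\gr^W_iV'$, which is monodromy-pure of weight $i$ by assumption. For $i>0$ we get $\gr^W_iV=0$, which is vacuously monodromy-pure.

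The only substantive case is $i=0$, where $\gr^W_0V=V/W_{-1}V'$. This quotient sits in a short exact sequence in $\Rep_E WD_K$:
\[
0\to V'/W_{-1}V'\to V/W_{-1}V'\to V''\to 0.
\]
Since $V'$ has non-positive weights, $W_0V'=V'$. Hence $V'/W_{-1}V'=\gr^W_0V'$ is monodromy-pure of weight $0$, and $V''$ is monodromy-pure of weight $0$ by hypothesis.

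So we need the following key claim: \emph{an extension of two monodromy-pure Weil--Deligne representations of the same weight $w$ is monodromy-pure of weight $w$.} Granting it, $\gr^W_0V$ is pure of weight $0$. This completes the verification, and all weights of $(V,W_\bullet V)$ are $\le 0$.

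\textbf{The key claim, which is the main obstacle.} The difficulty is that the monodromy filtration is not exact in general. For a nonsplit extension, $N$ on the middle term may have a different Jordan type than on the outer terms. So we cannot simply compare $\gr^M$ of the three terms.

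The approach I would try first is the standard characterization of monodromy-purity via a weight grading, in the spirit of \cite[1.6]{weil2}. Decompose $V$ into generalized eigenspaces $V_\lambda$ for a Frobenius lift $\varphi$; after passing to a power, $\varphi$ commutes with $I_K$. Let $V_a$ denote the sum of the $V_\lambda$ with $\lambda$ a $q$-Weil number of weight $a$. Since $N\varphi=q^{-1}\varphi N$, the operator $N$ maps $V_a$ into $V_{a-2}$ after twisting.

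The characterization is then: $V$ is monodromy-pure of weight $w$ if and only if $V=\bigoplus_a V_a$ with only Weil-number eigenvalues, and for every $k\ge 0$ the map
\[
N^k\colon V_{w+k}\to V_{w-k}
\]
is an isomorphism. This is the usual Lefschetz-type reformulation: the grading by Frobenius weights shifted by $w$ splits the monodromy filtration.

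With this characterization the claim follows. The decomposition $V\mapsto V_a$ is exact on short exact sequences, since it is a decomposition into generalized eigenspaces. So we obtain a map of short exact sequences
\[
0\to A_{w+k}\to V_{w+k}\to B_{w+k}\to 0 \quad\longrightarrow\quad 0\to A_{w-k}\to V_{w-k}\to B_{w-k}\to 0
\]
given by $N^k$, where $A$ and $B$ are the outer terms of the extension. The outer vertical maps are isomorphisms by purity of $A$ and $B$, so the middle one is an isomorphism by the five lemma.

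It remains to justify the characterization itself, which is elementary linear algebra from the defining property of the monodromy filtration. Alternatively, it can be cited from \cite[Proposition 1.6.14]{weil2} or \cite[Theorem 2.10]{betts-litt}.
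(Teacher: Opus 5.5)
Your proposal is correct and follows the paper's proof. The paper likewise reduces to the exact sequence $0\to\gr^W_0V'\to\gr^W_0V\to V''\to 0$ and invokes (the proof of) \cite[Proposition 2.11]{betts-litt} for the fact that extensions of monodromy-pure weight-$0$ objects are monodromy-pure of weight $0$; your Frobenius-weight-grading characterization plus the five lemma proves exactly this. One small slip: with the paper's normalization ($\varphi_{V(-1)}=q\,\varphi_V$) the relation is $N\varphi=q\,\varphi N$, not $N\varphi=q^{-1}\varphi N$, and it is the former that gives $N(V_a)\subset V_{a-2}$ as you use.
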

\begin{proof}
    The only property we need to check is that $\gr^W_0 V$ is monodromy-pure of weight $0$. By construction, it fits into the short exact sequence $0 \to \gr^W_0 V' \to \gr^W_0 V \to V'' \to 0$, where both $\gr^W_0 V'$ and $V''$ are monodromy-pure of weight $0$. This follows from (the proof of) \cite[Proposition 2.11]{betts-litt}.
\end{proof}

Now we begin the proof of the fact that the two versions of $\cD_\pure$ are equivalent. As the first step, we need a formula for the Ext groups in $\cD(\Rep_E^\mix WD_K)$. For this, we recall that \cite[Theorem 2.10]{betts-litt} ensures that the functor $W_0 \colon \Rep^\mix_E WD_K \to \Rep_E WD_K$ is exact, therefore, it extends to a $t$-exact functor 
\[
W_0 \colon \cD(\Rep^\mix_E WD_K) \to \cD(\Rep_E WD_K). 
\]
We show that Exts in the derived category $\cD^b(\Rep_E^{\mix}WD_K)$ coincide with Exts in the filtered derived category of Weil--Deligne representations:
\begin{lm}\label{lemma:exts in wdmix} Let $V_1, V_2\in \cD^b(\Rep^\mix_E WD_K)$. Then there is a natural isomorphism $\RHom_{WD_K^{\mix}}(V_1, V_2)\simeq \RGamma\bigl(WD_K, W_0\underline{\Hom}(V_1, V_2)\bigr)$.
\end{lm}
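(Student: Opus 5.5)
We reduce to the case $V_1=E$ with its trivial weight filtration, viewed as monodromy-pure mixed of weight $0$. The internal Hom $\underline{\Hom}$ on $\cD(\Rep^\mix_E WD_K)$ is $t$-exact, and $E$ is the monoidal unit. Every object of $\Rep^\mix_E WD_K$ is dualizable, and duals and tensor products are exact, so
\[
\RHom_{WD_K^{\mix}}(V_1,V_2)\simeq \RHom_{WD_K^{\mix}}\bigl(E,\underline{\Hom}(V_1,V_2)\bigr).
\]
This holds first for objects in the heart, and then for bounded complexes by dévissage on the $t$-structure, since both sides are exact functors in each variable. It therefore suffices to produce a natural equivalence $\RHom_{WD_K^{\mix}}(E,M)\simeq \RGamma(WD_K,W_0M)$ for $M\in \cD^b(\Rep^\mix_E WD_K)$. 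Again by dévissage, it is enough to do this functorially for $M$ in the heart and to check that the comparison map is an isomorphism on all $\Ext^i$.

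To construct the comparison map, we use the adjunction between $W_0$ and the functor $\Rep_E WD_K\supset\{\text{weights}\le 0\}\hookrightarrow \Rep^\mix_E WD_K$. A morphism $E\to M$ of mixed representations lands in $W_0M$, because morphisms are strictly compatible with weight filtrations. Hence $\Hom_{WD_K^\mix}(E,M)=\Hom_{WD_K}(E,W_0M)$, and the two sides agree in degree $0$. The functor $M\mapsto W_0M$ is exact by \cite[Theorem 2.10]{betts-litt}, so $\{M\mapsto \hH^i(WD_K,W_0M)\}_{i\ge0}$ is a $\delta$-functor on $\Rep^\mix_E WD_K$. Lemma~\ref{lemma:exts-are-universal} shows that $\{\Ext^i_{WD_K^\mix}(E,-)\}$ is universal. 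It therefore suffices to show that $\{\hH^i(WD_K,W_0(-))\}$ is also universal, i.e. effaceable in degrees $1$ and $2$, via \cite[Proposition~4.2]{Buchsbaum}. Corollary~\ref{corollary:Weil-Deligne-Tate-duality} shows that only $i=1,2$ matter.

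For $i=1$, we represent a class $u\in\hH^1(WD_K,W_0M)$ by an extension $0\to W_0M\to V\to E\to 0$ in $\Rep_E WD_K$. Lemma~\ref{lemma:lift-to-mix} equips $V$ with a mixed structure of non-positive weights extending that of $W_0M$. We then push out along $W_0M\hookrightarrow M$ to get $M'=M\sqcup_{W_0M}V$ in $\Rep^\mix_E WD_K$, using that this category is abelian. One checks that $W_0M'=V$, so the embedding $M\hookrightarrow M'$ kills $u$.

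For $i=2$, we mimic the proof of Lemma~\ref{lemma:Weil-Deligne-cohomology}. Duality gives $\hH^2(WD_K,W_0M)^\vee\simeq\Hom_{WD_K}(W_0M,E(1))$. We take $M'=M\otimes\Sym^rS$ for large $r$, where $S$ is the unipotent representation of Lemma~\ref{lemma:killing-homs-into-E(1)}. The representation $S$ is an iterated extension of $E$, hence monodromy-pure of weight $0$ and mixed pure of weight $0$ with a trivial filtration, so $W_0(M\otimes\Sym^rS)=W_0M\otimes\Sym^rS$. Lemma~\ref{lemma:killing-homs-into-E(1)} then makes the restriction map on $\Hom(-,E(1))$ vanish. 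The point requiring most care is this last step: we must check that $S$ is genuinely monodromy-pure of weight $0$ and that $W_0$ commutes with tensoring by it. The first holds because its Frobenius eigenvalues are $1$ and $N=0$; the second holds since $\gr^W(M\otimes S)=\gr^WM\otimes S$. We also need the $i=1$ pushout to have the claimed $W_0$, which follows from exactness of $W_0$.
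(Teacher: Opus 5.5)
Your proposal is correct and follows essentially the same route as the paper: you reduce to $V_1=E$, compare via the exact functor $W_0$, match degree $0$, and prove effaceability of $\hH^i\bigl(WD_K,W_0(-)\bigr)$ for $i=1,2$ (Lemma~\ref{lemma:lift-to-mix} plus a pushout for $i=1$, Tate duality plus Lemma~\ref{lemma:killing-homs-into-E(1)} for $i=2$). The only deviation is that for $i=2$ you embed $M$ directly into $M\otimes_E\Sym^rS$ via $m\mapsto m\otimes e_1^r$ instead of forming the paper's pushout $M\oplus_{W_0M}(W_0M\otimes_E\Sym^dS)$; this is a valid simplification, since $\Sym^rS$ is pure of weight $0$ and so the tensor filtration gives $W_0(M\otimes_E\Sym^rS)=W_0M\otimes_E\Sym^rS$.
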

\begin{proof}
    First, we note that $\RHom_{WD_K^{\mix}}(V_1, V_2) \simeq \RHom_{WD_K^\mix}\bigl(E, \ud{\Hom}(V_1, V_2)\bigr)$. Since $W_0 E =E$, the functor $W_0 \colon \cD(\Rep^\mix_E WD_K) \to \cD(\Rep_E WD_K)$ induces a natural morphism
    \[
    \RHom_{WD_K^\mix}\bigl(E, \ud{\Hom}(V_1, V_2)\bigr) \to \RHom_{WD_K}\bigl(E,W_0\ud{\Hom}(V_1, V_2)\bigr)
    \]
    which can be rewritten as a morphism $\RHom_{WD_K^{\mix}}(V_1, V_2) \to \RGamma\bigl(WD_K, W_0\underline{\Hom}(V_1, V_2)\bigr)$. Therefore, we reduce the question to showing that the natural morphism
    \begin{equation}\label{eqn:compute-ext-mix}
    \RHom_{WD_K^\mix}(E, V) \to \RHom_{WD_K}(E,W_0 V)
    \end{equation}
    is an isomorphism for any $V\in \cD^b(\Rep^\mix_E WD_K)$. Since $W_0$ is $t$-exact, we reduce the question to showing that (\ref{eqn:compute-ext-mix}) is an isomorphism for $V\in \Rep^\mix_E WD_K$. In this case, Lemma~\ref{lemma:exts-are-universal} reduces the question to showing that the natural map $\Hom_{WD_K^\mix}(E, V) \to \Hom_{WD_K}(E, W_0V)$ is an isomorphism for any $V\in \Rep^\mix_E WD_K$ and that $\bigl\{\Ext^i_{WD_K}\bigl(E, W_0(-)\bigr)\bigr\}_{i\in \ZZ_{\geq 0}}= \bigl\{\hH^i\bigl(WD_K, W_0(-)\bigr)\bigr\}_{i\in \ZZ_{\geq 0}}$ is a universal $\delta$-functor on $\Rep_E^\mix WD_K$. 

    The claim that the map $\Hom_{WD_K^\mix}(E, V) \to \Hom_{WD_K}(E, W_0V)$ is an isomorphism is clear. So we only need to show the latter claim. For this, we first note that $\bigl\{\hH^i\bigl(WD_K, W_0(-)\bigr)\bigr\}_{i\in \ZZ_{\geq 0}}$ is indeed a $\delta$-functor since $W_0 \colon \Rep^\mix_E WD_K \to \Rep_E WD_K$ is an exact functor.  
    
    Now \cite[Proposition~4.2]{Buchsbaum} ensures that it suffices to show that, for any $V\in \Rep^\mix_E WD_K$, $i\in \ZZ_{>0}$, and $u\in \hH^i\bigl(WD_K, W_0V\bigr)$, there is an injection $V\hookrightarrow V'$ in $\Rep_E^\mix WD_K$ such that the image of $u$ in $\hH^i\bigl(WD_K, W_0V'\bigr)$ vanishes. Furthermore, by Corollary~\ref{corollary:Weil-Deligne-Tate-duality} we only need to treat the cases of $i=1$ and $i=2$.

    \emph{Case of $i=1$.} Fix a class $u\in \hH^1\bigl(WD_K, W_0V\bigr) = \Ext^1_{WD_K}\bigl(E, W_0V\bigr)$. By \cite[\href{https://stacks.math.columbia.edu/tag/06XU}{Tag 06XU}]{stacks-project}, it is represented by an extension $0\to W_0 V \to U \to E \to 0$ in $\Rep_E WD_K$. Lemma~\ref{lemma:lift-to-mix} ensures that $U$ can be naturally promoted to an object $(U, W_\bullet U)\in\Rep^\mix_E WD_K$ such that $W_0U=U$ and such that $0 \to W_0V \to U \to E \to 0$ is exact in $\Rep_E^\mix WD_K$. The image of $u$ in $\hH^1(WD_K, U)=\Ext^1_{WD_K}(E, U)$ is $0$, because the extension $U$ becomes split upon being pushed out to $U$ itself. 
    
    Now we define $V'\in \Rep^\mix_E WD_K$ as the pushout $V\oplus_{W_0V} U\coloneqq \coker(W_0V \to V\oplus U)$ in $\Rep^\mix_E WD_K$. The natural map $V \to V'$ is injective since so is $W_0(V) \to U$. Furthermore, the exactness of $W_0$ implies that $W_0V'\simeq U$. Therefore, the image of $u$ in $\hH^1\bigl(WD_K, W_0V'\bigr)=\hH^1(WD_K, U)$ is zero as shown in the previous paragraph. This finishes the proof when $i=1$. 

    \emph{Case of $i=2$.} We need to show that, for any $V\in \Rep_E^\mix WD_K$, there is an injection $V \hookrightarrow V'$ in $\Rep_E^\mix WD_K$ such that $\hH^2\bigl(WD_K, W_0V\bigr) \to \hH^2\bigl(WD_K, W_0V'\bigr)$ is zero. By Tate duality for Weil--Deligne cohomology (see Corollary~\ref{corollary:Weil-Deligne-Tate-duality}), this is equivalent to asking that 
    \begin{align*}
    \Hom_{WD_K}\bigl(E, (W_0V')^{\vee}(1)\bigr)  & \simeq \Hom_{WD_K}\bigl(W_0V' , E(1)\bigr)  \\
    &\longrightarrow \Hom_{WD_K}\bigl(W_0 V , E(1)\bigr) \simeq \Hom_{WD_K}\bigl(E, (W_0 V)^{\vee}(1)\bigr)
    \end{align*}
    is zero. Let $S$ be the two-dimensional $E$-linear Weil--Deligne representation from Lemma~\ref{lemma:killing-homs-into-E(1)}. Note that it is monodromy-pure of weight $0$, so $(S, W_\bullet S)$ lies in $\Rep^\mix_E WD_K$ with the filtration defined by $W_iS=0$ if $i<0$ and $W_iS=S$ if $i\geq 0$. For any $r\geq 0$, the object $W_0V \otimes_E \Sym^r S \in \Rep_E^\mix WD_K$ has only non-positive weights. Now set $d\coloneqq \dim_E W_0 V$ and $U\coloneqq W_0 V\otimes_E \Sym^{d} S$. It comes with the natural inclusion of Weil--Deligne representations $W_0V \hookrightarrow U$ and Lemma~\ref{lemma:killing-homs-into-E(1)} implies that the natural morphism
    \[
    \Hom_{WD_K}\bigl(U, E(1)\bigr) \to \Hom_{WD_K}\bigl(W_0V, E(1)\bigr)
    \]
    is zero. 

    As in the case $i=1$, we define $V'\in \Rep^\mix_E WD_K$ as the pushout $V\oplus_{W_0V} U\coloneqq \coker(W_0(V) \to V\oplus U)$ in $\Rep^\mix_E WD_K$. The natural map $V \to V'$ is injective and, after applying $W_0$, it induces the map $W_0V\to U$. Hence it induces the zero map on $\hH^2(WD_K,-)$, as we just established. This finishes the proof. 
\end{proof}

Before demonstrating the equivalence of the two versions of pure complexes, we only need one additional preliminary lemma.

\begin{lemma}\label{lemma:equality-of-homs-weights} Let $V_1, V_2\in \cD^b_{\pure}(\Rep_E^\mix WD_K)$ such that $V_1\in \cD^{\geq n}_{\pure}(\Rep_E^\mix WD_K)$ is concentrated in cohomological degrees $\geq n$ and $V_2\in \cD^{\leq n}_{\pure}(\Rep_E^\mix WD_K)$  is concentrated in degrees $\leq n$. Then the natural morphism $W_0\underline{\Hom}(V_1, V_2)\to \underline{\Hom}(V_1, V_2)$ is an isomorphism.
\end{lemma}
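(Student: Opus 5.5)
We will show that the cone of $W_0\underline{\Hom}(V_1,V_2)\to \underline{\Hom}(V_1,V_2)$ vanishes. Since $W_0$ and $\underline{\Hom}$ are $t$-exact and commute with the forgetful functor, it is enough to prove that every cohomology object of $\underline{\Hom}(V_1,V_2)$ has weights $\leq 0$. For such an object $H$ in $\Rep^\mix_E WD_K$ we have $W_0H=H$, and $t$-exactness of $W_0$ then gives the claim on each cohomology object.

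The first step is a dévissage. Both $V_1$ and $V_2$ are bounded, so the truncation triangles
\[
\tau^{\leq m-1}V\to \tau^{\leq m}V\to \hH^m(V)[-m]
\]
express each of them as a finite successive extension of shifts $\hH^a(V_1)[-a]$ with $a\geq n$, respectively $\hH^b(V_2)[-b]$ with $b\leq n$. The functor $\underline{\Hom}$ is exact in each variable, so $\underline{\Hom}(V_1,V_2)$ is a finite iterated extension of the objects
\[
\underline{\Hom}\bigl(\hH^a(V_1),\hH^b(V_2)\bigr)[a-b], \qquad a\geq n\geq b.
\]
The subcategory of $\cD^b(\Rep^\mix_E WD_K)$ consisting of objects all of whose cohomology has weights $\leq 0$ is closed under extensions. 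This follows from the long exact cohomology sequence, the fact that $\Rep^\mix_E WD_K$ is abelian, and the fact that subobjects and quotients of representations with weights $\leq 0$ again have weights $\leq 0$. The last point holds because morphisms are strictly compatible with weight filtrations, i.e. $W_\bullet$ is exact. So it suffices to treat a single term of the form above.

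By $t$-exactness of $\underline{\Hom}$, that term has a single cohomology object, namely the internal Hom of $\hH^a(V_1)$, which is pure mixed of weight $a$, into $\hH^b(V_2)$, which is pure mixed of weight $b$. Since $V_1\in\cD^b_\pure(\Rep^\mix_E WD_K)$, the object $\hH^a(V_1)$ is monodromy-pure of weight $a$ in the sense of Definition~\ref{defn:mixed-monodromy-pure}, so $\gr^W$ is concentrated in weight $a$; similarly for $\hH^b(V_2)$. Its dual is then pure mixed of weight $-a$, using that duals of monodromy-pure representations are monodromy-pure of the negative weight. Hence the internal Hom, which is the tensor product of this dual with $\hH^b(V_2)$, is pure mixed of weight $b-a\leq 0$, since tensor products of pure objects are pure of the sum of the weights. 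This is exactly the needed statement.

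The only delicate point is checking the weight behaviour of the internal Hom and the dual within $\Rep^\mix_E WD_K$, i.e. that the mixed structure on $\underline{\Hom}$ has $\gr^W_k\underline{\Hom}(A,B)=\bigoplus_{j-i=k}\underline{\Hom}(\gr^W_iA,\gr^W_jB)$. For pure $A$ and $B$ this reduces to the statement that the dual of a weight-$a$ monodromy-pure representation is monodromy-pure of weight $-a$. That in turn follows from the definition, since the monodromy filtration of the dual is dual to the original one and Frobenius eigenvalues get inverted. I expect no serious obstacle beyond this bookkeeping.
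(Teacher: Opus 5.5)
Your argument is correct and is essentially the paper's proof. The paper writes $\underline{\Hom}(V_1,V_2)\simeq V_1^\vee\otimes_E V_2$ and notes that $V_1^\vee\in\cD^{\leq -n}_{\pure}(\Rep^\mix_E WD_K)$, so the tensor product lies in $\cD^{\leq 0}_{\pure}(\Rep^\mix_E WD_K)$; it then checks that $W_0V\to V$ is an isomorphism on cohomology. Your dévissage to the pieces $\underline{\Hom}\bigl(\hH^a(V_1),\hH^b(V_2)\bigr)[a-b]$, together with closure of ``weights $\leq 0$'' under extensions, simply spells out the closure of $\cD_{\pure}$ under duals and tensor products that the paper uses implicitly.
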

\begin{proof}
First, we have $\underline{\Hom}(V_1,V_2)\simeq V_1^{\vee}\otimes_E V_2$. Furthermore, the dual object $V^\vee_1$ belongs to $\cD^{\leq -n}_{\pure}(\Rep^{\mix}_E WD_K)$, hence the tensor product $V_1^{\vee}\otimes_E V_2$ lies in $\cD^{\leq 0}_{\pure}(\Rep^{\mix}_E WD_K)$. Thus, it suffices to show that the natural morphism $W_0 V \to V$ is an isomorphism for any $V \in \cD_{\pure}^{\leq 0}(\Rep_E^\mix WD_K)$. This can be checked on individual cohomology groups, where the claim is clear.
\end{proof}

Recall that, for an $\infty$-category $\cC$, we denote by $\Map_\cC(X, Y)$ the mapping anima between objects $X$ and $Y$. If $\cC=\cD(\cA)$ for an abelian category $\cA$, this mapping anima can be identified with $\tau^{\leq 0}\RHom_{\cA}(X, Y)$.

\begin{proof}[Proof of Theorem \ref{thm: dbpure is dbpure of mixed}]
    The forgetful functor is symmetric monoidal, so we only need to show that it is an equivalence of underlying $\infty$-categories. 

    First, we show that $F$ is fully faithful. For this, we need to show that the natural morphism $\Map_{\cD(\Rep_E^\mix WD_K)}(V_1, V_2) \to \Map_{\cD(\Rep_E WD_K)}(V_1, V_2)$ is an equivalence for any $V_1, V_2\in \cD^b_{\pure}(\Rep_E^\mix WD_K)$. In other words, we wish to show that the natural morphism
    \[
    \tau^{\leq 0}\RHom_{WD_K^\mix}(V_1, V_2) \to \tau^{\leq 0}\RHom_{WD_K}(V_1, V_2)
    \]
    is an isomorphism for any $V_1, V_2\in \cD^b_{\pure}(\Rep_E^\mix WD_K)$. Since $\RHom_{WD_K^\mix}(V_1, V_2) \simeq \RHom_{WD_K^\mix}(E, V_1^\vee\otimes_E V_2)$ and the same formula holds in $\cD(\Rep_E WD_K)$, we can assume that $V_1=E$. In this case, we denote $V_2$ simply by $V$.

   In this situation, we consider the following commutative diagram
    \[
    \begin{tikzcd}
        \RHom_{WD_K^\mix}(E, \tau^{\leq 0} V) \arrow{d}{\alpha} \arrow{r} & \RHom_{WD_K^\mix}(E,  V) \arrow{d}{\beta} \arrow{r} & \RHom_{WD_K^\mix}(E, \tau^{>0} V) \arrow{d}{\gamma} \\
        \RHom_{WD_K}(E, \tau^{\leq 0} V) \arrow{r} & \RHom_{WD_K}(E, V) \arrow{r} & \RHom_{WD_K}(E, \tau^{>0} V),
    \end{tikzcd}
    \]
    where the rows are fiber sequences. We wish to show that $\tau^{\leq 0}\beta$ is an isomorphism.  Since both $\RHom_{WD_K^\mix}(E, \tau^{>0} V)$ and $\RHom_{WD_K}(E, \tau^{>0} V)$ lie in $\cD^{\geq 1}(\ZZ)$, it suffices to show that $\alpha$ is an isomorphism. This follows directly from Lemma~\ref{lemma:exts in wdmix} and Lemma~\ref{lemma:equality-of-homs-weights}. 

    Now we show that $F$ is essentially surjective. First, we note that, by the very definition, every object of the form $V[-d]$ for a monodromy-pure $V$ of weight $d$ lies in the essential image of $F$. Now we pick $V\in \cD^{[a,b]}_{\pure}(\Rep_E WD_K)$ and wish to show that it lies in the essential image. We argue by induction on $b-a\geq 0$. If $b-a=0$, this was proved above. So we assume that $n\coloneqq b-a>0$ and the claim is known for any $b-a<n$. Consider the exact triangle $\hH^a(V)[-a] \to V \to \tau^{>a}V$. By induction, $\hH^a(V)[-a]$ and $\tau^{>a}V$ lie in the essential image of $F$. Say $\hH^a(V)[-a]  \simeq F(U_1)$ and $\tau^{>a}V \simeq F(U_2)$. Since $F$ is $t$-exact, we conclude that $U_1\in \cD^{[a, a]}_{\pure}(\Rep_E^\mix WD_K)$ and $U_2\in \cD^{[a+1, b]}_{\pure}(\Rep_E^\mix WD_K)$. Now it suffices to show that the natural map
    \[
    \Ext^1_{WD_K^\mix}\bigl(U_2, U_1\bigr) \to \Ext^1_{WD_K}\bigl(F(U_2), F(U_1)\bigr)
    \]
    is an isomorphism. This follows immediately from Lemma~\ref{lemma:exts in wdmix} and Lemma~\ref{lemma:equality-of-homs-weights}.
\end{proof}

\subsection{Consequences for formality.}\label{subsection:proof-formality} {The main goal of this subsection is to prove Theorem~\ref{thm:main-formality}. The proof is a formal consequence of Theorem~\ref{thm: dbpure is dbpure of mixed} combined with the decomposition result given in Lemma \ref{lem: dpure splitting underlying complex}.}

\begin{lm}\label{lem: dpure splitting underlying complex}
There is an equivalence of symmetric monoidal functors between the forgetful functor $\cD_{\pure}(\Rep_E^{\mix}WD_K)\to \cD(\Vect_{E})$ and its composition with the functor $M\mapsto \bigoplus\limits_{i\in\bZ} \hH^i(M)[-i]\colon   \cD(\Vect_E)\to \cD(\Vect_E)$.
\end{lm}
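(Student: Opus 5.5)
The idea is to use the canonical splitting of the weight filtration on underlying vector spaces, as in the proof of Theorem~\ref{thm:intro-pure wd algebra is formal}. For any mixed Weil--Deligne representation $(V,W_\bullet V)$, fix the Frobenius lift $\varphi$ and let $V_{(i)}\subset V$ be the sum of the generalized $\varphi$-eigenspaces whose eigenvalues have the form $q^{j/2}\cdot$(Weil number of weight $i+j$) relative to the monodromy filtration. A cleaner choice is the Deligne splitting: the monodromy filtration $M$ on each $\gr^W_i$ has a canonical $\mathfrak{sl}_2$-type splitting relative to $N$. I would use the functorial splitting of \cite[Theorem 2.10]{betts-litt}, which is constructed from $\varphi$ and $N$ and is compatible with tensor products. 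Call it the \emph{weight decomposition}. It gives a symmetric monoidal exact functor
\[
\mathrm{gr}\colon \Rep^\mix_E WD_K\to \Vect^{\bZ}_E,\qquad V\mapsto \bigoplus_i V_{(i)},
\]
to graded vector spaces. Composed with the forgetful functor $\Vect^{\bZ}_E\to\Vect_E$, it is isomorphic to the forgetful functor $\Rep^\mix_E WD_K\to \Vect_E$, naturally and monoidally, via $\bigoplus_i V_{(i)}\simeq V$.

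Since both functors are exact, \cite[Proposition A.5]{Nikolaus-Scholze} derives everything. The forgetful functor $\cD(\Rep^\mix_E WD_K)\to\cD(\Vect_E)$ becomes equivalent, as a symmetric monoidal functor, to the composite $\cD(\Rep^\mix_E WD_K)\to\cD(\Vect^{\bZ}_E)\to\cD(\Vect_E)$. Here $\cD(\Vect^{\bZ}_E)\simeq\prod_i\cD(\Vect_E)$, with Day convolution tensor product, and the second functor takes the direct sum of the graded pieces. So the forgetful functor factors symmetric monoidally through graded complexes $M\mapsto (M_{(w)})_{w\in\bZ}$.

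Now restrict to $\cD_\pure(\Rep^\mix_E WD_K)$. For $M$ in this subcategory, $\hH^i(M)$ is pure of weight $i$, so $\hH^i(M_{(w)})=\hH^i(M)_{(w)}$ vanishes unless $i=w$, by exactness of $\mathrm{gr}$. Hence $M_{(w)}$ is concentrated in degree $w$, and there is a natural equivalence $M_{(w)}\simeq \hH^w(M_{(w)})[-w]=\hH^w(M)[-w]$, given by truncation, so functorial. Thus the image of $\cD_\pure$ lands in the full subcategory $\cC\subset\cD(\Vect^{\bZ}_E)$ of graded objects whose weight-$w$ piece is concentrated in degree $w$. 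On $\cC$, the functor $N\mapsto \bigoplus_w N_{(w)}$ is equivalent to $N\mapsto\bigoplus_i \hH^i(\bigoplus_w N_{(w)})[-i]$. This is a symmetric monoidal equivalence: $\cC$ is closed under tensor products, and $\cC$ is equivalent to its heart-like description, namely the graded vector spaces placed in degree equal to the weight. On such objects the identification with cohomology is compatible with the Künneth isomorphism, since $\cC$ is equivalent, symmetric monoidally, to the ordinary $1$-category of graded vector spaces with the Koszul sign rule. Composing these equivalences gives the lemma.

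The main obstacle is making the monoidal coherence rigorous. I would handle it by noting that $\cC$ is a symmetric monoidal $1$-category, equivalent via $N\mapsto\bigoplus_w \hH^w(N_{(w)})$ to graded vector spaces with the Koszul signs. The functor $M\mapsto\bigoplus_i\hH^i(M)[-i]$ on $\cD(\Vect_E)$, restricted to objects with cohomology in $\cC$-form, is then visibly the same symmetric monoidal functor. Coherences between $1$-categorical symmetric monoidal functors reduce to checking naturality and compatibility with the tensor and unit isomorphisms, which is routine. A secondary point is confirming that the chosen weight splitting is genuinely functorial and tensor-compatible; I take this from \cite[Theorem 2.10]{betts-litt} together with the fact that morphisms of mixed representations are strict for $W_\bullet$.
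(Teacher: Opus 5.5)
Your proposal is correct and is essentially the paper's proof. The paper likewise uses the tensor-compatible Betts--Litt splitting $\obl\simeq\bigoplus_i\gr^W_i$ (cited there as their Definition 2.13, not Theorem 2.10) to factor the derived forgetful functor through graded complexes. It then notes that $\cD_{\pure}$ lands in graded complexes whose weight-$w$ part sits in degree $w$, and uses that this is a symmetric monoidal $1$-category to identify the functor with $\bigoplus_i \hH^i(-)[-i]$.

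Drop your first suggestion of splitting by generalized Frobenius eigenspaces, because eigenvalues alone do not detect the $W$-weight. For example, the extension of $E$ by $E(1)$ with $N\neq 0$ is pure of weight $-1$ but has Frobenius weights $0$ and $-2$. So the splitting genuinely needs $N$, as in Betts--Litt.
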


\begin{proof}
This lemma follows formally from the fact that the weight filtration on any mixed Weil--Deligne representation is naturally split, at the level of underlying vector spaces, as we now explain.

We denote by $\obl\colon \Rep_E^{\mix}WD_K\to \Vect_{E}$ the forgetful functor sending a Weil--Deligne representation to its underlying vector space. By \cite[Definition 2.13]{betts-litt}, this functor is isomorphic as a symmetric monoidal functor to the functor $V\mapsto \bigoplus\limits_i \gr^W_i V$. In particular, $\obl$ factors through the category $\Vect^{\bZ}_E$ of $\bZ$-graded vector spaces.

Passing to derived functors, we obtain a factorization of the symmetric monoidal functor $\cD(\obl):$
\[
\cD(\obl)\colon \cD(\Rep_E^{\mix}WD_K)\xrightarrow{\gr^{\bullet}_W} \cD(\Vect_E^{\bZ})\xrightarrow{\oplus} \cD(\Vect_E)
\]
Restricted to the full subcategory $\cD_{\pure}(\Rep^{\mix}_EWD_K)\subset \cD(\Rep_{E}^{\mix}WD_K)$, the functor $\gr^{\bullet}_W$ lands in the full subcategory $\cD_{\pure}(\Vect_E^{\bZ})$ consisting of graded complexes $M$ such that $\gr^i M$ is concentrated in cohomological degree $i$. This is a symmetric monoidal subcategory because tensor product over $E$ is exact.

However, the endofunctor $M\mapsto\bigoplus \hH^i(M)[-i]\colon \cD_{\pure}(\Vect^{\bZ}_E)\to \cD_{\pure}(\Vect^{\bZ}_E)$ is equivalent to the identity functor, as symmetric monoidal functors. Indeed, $\cD_{\pure}(\Vect^{\bZ}_E)$ is equivalent to the nerve of a symmetric monoidal $1$-category because all the mapping spaces in this category are discrete. Thus constructing an equivalence between two endofunctors amounts to doing so on the homotopy category of this $\infty$-category, where the equivalence is evident.

Since $\cD_{\pure}(\obl)\colon \cD_{\pure}(\Rep_E^{\mix}WD_K)\to\cD(\Vect_E)$ factors through $\cD_{\pure}(\Vect^{\bZ}_E)$, its composition with $M\mapsto\bigoplus \hH^i(M)[-i]$ is equivalent to $\cD_{\pure}(\obl)$ as a symmetric monoidal functor.
\end{proof}

Combining Lemma \ref{lem: dpure splitting underlying complex} with Theorem \ref{thm: dbpure is dbpure of mixed}, we get a general formality result:

\begin{cor}\label{cor: pure wd algebra is formal}
For any $E_{\infty}$-algebra $A\in \cD^b_{\pure}(\Rep_E WD_K)$ there exists an equivalence $A\simeq\bigoplus \hH^i(A)[-i]$ of $E_{\infty}$-algebras in $\cD(\Vect_E)$.
\end{cor}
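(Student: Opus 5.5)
The plan is to transport $A$ along the symmetric monoidal equivalence of Theorem~\ref{thm: dbpure is dbpure of mixed} and then apply the splitting of Lemma~\ref{lem: dpure splitting underlying complex}.

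\emph{Lifting $A$.} Let $F\colon \cD^b_{\pure}(\Rep_E^\mix WD_K)\to \cD^b_{\pure}(\Rep_E WD_K)$ be the forgetful functor. By Theorem~\ref{thm: dbpure is dbpure of mixed}, $F$ is a symmetric monoidal equivalence. It therefore induces an equivalence on $\infty$-categories of commutative algebra objects,
\[
\mathrm{CAlg}\bigl(\cD^b_{\pure}(\Rep_E^\mix WD_K)\bigr)\simeq \mathrm{CAlg}\bigl(\cD^b_{\pure}(\Rep_E WD_K)\bigr).
\]
Hence there is an $E_\infty$-algebra $\widetilde A$ in $\cD^b_{\pure}(\Rep_E^\mix WD_K)$ with $F(\widetilde A)\simeq A$ as $E_\infty$-algebras. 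The underlying $E_\infty$-algebra of $A$ in $\cD(\Vect_E)$ is then the image of $\widetilde A$ under the symmetric monoidal forgetful functor $\cD_{\pure}(\Rep_E^\mix WD_K)\to\cD(\Vect_E)$. This uses that forgetting to vector spaces factors through $F$ compatibly with the symmetric monoidal structures, which holds because all forgetful functors involved are derived from exact symmetric monoidal functors.

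\emph{Splitting.} Lemma~\ref{lem: dpure splitting underlying complex} gives an equivalence of symmetric monoidal functors $\obl\simeq \bigoplus_i \hH^i(-)[-i]\circ\obl$ on $\cD_{\pure}(\Rep_E^\mix WD_K)$. An equivalence of symmetric monoidal functors induces an equivalence of the induced functors on $\mathrm{CAlg}$. Evaluating at $\widetilde A$ yields an equivalence of $E_\infty$-algebras
\[
\obl(\widetilde A)\simeq \bigoplus_i \hH^i\bigl(\obl(\widetilde A)\bigr)[-i].
\]

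\emph{Identifying the target.} It remains to check that the $E_\infty$-structure on the right-hand side is the standard graded-commutative algebra $\bigoplus_i \hH^i(A)[-i]$. The functor $M\mapsto\bigoplus \hH^i(M)[-i]$ on $\cD(\Vect_E)$ is symmetric monoidal. Indeed, over a field the Künneth isomorphism $\hH^*(M\otimes N)\simeq \hH^*(M)\otimes \hH^*(N)$ holds, and the functor factors through graded vector spaces with the Koszul sign rule. So it sends $A$ to the graded commutative algebra $\hH^*(A)$ viewed as an $E_\infty$-algebra, which proves the corollary.

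The main subtlety is this last identification. One has to make sure that the symmetric monoidal structure on $\bigoplus\hH^i(-)[-i]$ used in Lemma~\ref{lem: dpure splitting underlying complex} agrees with the one whose value on $A$ is the formal algebra $\hH^*(A)$. I would handle this by restricting to the subcategory $\cD_{\pure}(\Vect^{\bZ}_E)$, as in the proof of that lemma. That subcategory is a $1$-category, so the compatibility can be checked on homotopy categories, where it reduces to the Künneth isomorphism with signs.
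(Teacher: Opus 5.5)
Your proposal is correct and follows the paper's argument: the paper proves the corollary by lifting $A$ along the symmetric monoidal equivalence of Theorem~\ref{thm: dbpure is dbpure of mixed} to a commutative algebra in $\cD^b_{\pure}(\Rep^{\mix}_E WD_K)$, and then applying the symmetric monoidal splitting of Lemma~\ref{lem: dpure splitting underlying complex}. Your remarks on passing to $\mathrm{CAlg}$, and on identifying the target as the graded-commutative algebra $\hH^*(A)$ via Künneth with Koszul signs, spell out steps the paper leaves implicit.
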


It remains to apply Corollary \ref{cor: pure wd algebra is formal} to the cohomology of rigid-analytic spaces satisfying the weight-monodromy conjecture:

\begin{proof}[Proof of Theorem \ref{thm:main-formality}]

We have $\RGamma_{\et}(X_{\bC_p},\bQ_{\ell})$ as a commutative algebra object of $\cD(G_K;\bQ_{\ell})$. Each cohomology group $\hH^i_{\et}(X_{\bC_p},\bQ_{\ell})$ is a condensed representation of $G_K$ arising from a continuous representation on a finite-dimensional $\bQ_{\ell}$-vector space (cf. \cite[Proposition 4.9]{bosco}). Hence, Lemma \ref{lem: repgk derived of its heart} makes $\RGamma_{\et}(X_{\bC_p},\bQ_{\ell})$ into a commutative algebra object of the symmetric monoidal category $\cD^b(\Rep_{\bQ_{\ell}}G_K)$. Moreover, for $\ell=p$ the $p$-adic cohomology $\RGamma_{\et}(X_{\bC_p},\bQ_p)$ further lifts to an object of $\cD^b(\Rep^{\dR}_{\bQ_p}G_K)$ by Proposition \ref{prop: qp etale cohomology complex is de rham}.

The desired formality will now be a consequence of Corollary \ref{cor: pure wd algebra is formal}. Let us first treat the case $\ell\neq p$. Applying the symmetric monoidal functor $\DD\colon \cD^b(\Rep_{\bQ_{\ell}} G_K)\to \cD^b(\Rep_{\bQ_{\ell}}WD_K)$ from Example \ref{ex: wd from galois reps}(1), we obtain a commutative algebra object $\DD\bigl(\RGamma_{\et}(X_{\bC_p},\bQ_{\ell})\bigr)$ of $\cD^b(\Rep_{\bQ_{\ell}}WD_K)$ whose $i$-th cohomology is monodromy-pure of weight $i$, by the assumption of the theorem. That is, it lies in the full subcategory $\cD^b_{\pure}(\Rep_{\bQ_{\ell}}WD_K)$ and Corollary \ref{cor: pure wd algebra is formal} implies that $\DD(\RGamma_{\et}(X_{\bC_p},\bQ_{\ell}))$ is a formal algebra. Since the underlying commutative algebra object of $\cD(\Vect_{\bQ_{\ell}})$ of $\DD(\RGamma_{\et}(X_{\bC_p},\bQ_{\ell}))$ is equivalent to $\RGamma_{\et}(X_{\bC_p},\bQ_{\ell})$, we obtain the formality of $\ell$-adic cohomology for $\ell\neq p$.

Similarly, for $\ell=p$ we apply the functor from Example \ref{ex: wd from galois reps}(2) to the commutative algebra object $\RGamma_{\et}(X_{\bC_p},\bQ_p)\in \cD^b(\Rep_{\bQ_p}^{\dR}G_K)$ to obtain an object $\DD(\RGamma_{\et}(X_{\bC_p},\bQ_p))\in \cD_{\pure}^b(\Rep_{\bQ_p^{\nr}}WD_K)$. By de Rham comparison isomorphism, we have an equivalence of commutative algebras in $\cD(\Vect_{\BB_{\dR}})$
\begin{equation}
\RGamma_{\et}(X_{\bC_p},\bQ_p)\otimes_{\bQ_p} \BB_{\dR}\simeq\DD(\RGamma_{\et}(X_{\bC_p},\bQ_p))\otimes_{\bQ_p^{\nr}}\BB_{\dR}.
\end{equation}
Since $\DD(\RGamma_{\et}(X_{\bC_p},\bQ_p))$ is formal by Corollary \ref{cor: pure wd algebra is formal} and formality can be checked after an arbitrary field extension (see \cite[Corollary 6.9]{Obstructions}), we conclude that the $p$-adic \'etale cohomology algebra $\RGamma_{\et}(X_{\bC_p},\bQ_p)$ is formal as well.
\end{proof}

\bibliographystyle{alpha}
\bibliography{name}

\end{document}